\newtheorem{theorem}{Theorem}
\newtheorem{lemma}[theorem]{Lemma}
\newtheorem{proposition}[theorem]{Proposition}
\newtheorem{corollary}[theorem]{Corollary}
\theoremstyle{definition}
\newtheorem{definition}[theorem]{Definition}
\newtheorem{example}{Example}
\theoremstyle{remark}
\newtheorem{remark}{Remark}
\newtheorem*{theorem*}{Theorem}
\newcommand{\be}{\begin{equation}}
\newcommand{\ee}{\end{equation}}
\newcommand{\ba}{\begin{align}}
\newcommand{\ea}{\end{align}}
\newcommand{\ban}{\begin{align*}}
\newcommand{\ean}{\end{align*}}
\newcommand{\la}{\label}
\newcommand{\R}{\mathbb{R}}
\newcommand{\Hei}{\mathbb{H}}
\newcommand{\cic}{{C^{\infty}_c}}
\newcommand{\parder}[2]{\frac{\partial{#1}}{\partial{#2}}}
\newcommand{\darr}[4]{{\left\{\begin{array}{ll}
{#1}&{#2}\\
{#3}&{#4}
\end{array}\right.}}
\def\g{{g_{\lambda}}}
\def\g{{g_{\lambda}}}
\def\g1{{g_{1/2}}}
\def\ddx{{{\rm d}x}}
\def\H{{\mathbb{H}}}
\def\rd{{{\rm d}}}
\newcommand{\sph}{\mathrm{S}}
\newcommand{\ia}{({\rm i})}
\newcommand{\ib}{({\rm ii})}
\begin{document}

\thanks{M. Chatzakou is supported by the FWO Odysseus 1 grant G.0H94.18N: Analysis and Partial Differential Equations and by the Methusalem programme of the Ghent University Special Research Fund (BOF) (Grant number 01M01021), and  is a postdoctoral fellow of
the Research Foundation-Flanders (FWO) under the postdoctoral grant No 12B1223N.} 

\title {Geometric Hardy inequalities on the Heisenberg groups via convexity}

%%    Information for first author
\author{G. Barbatis}
\address{Department of Mathematics, National and Kapodistrian University of Athens, \newline
15784 Athens, Greece}
\email{gbarbatis@math.uoa.gr}

\author{M. Chatzakou}
%    Address of record for the research reported here
\noindent
\address{Department of Mathematics: Analysis, Logic and Discrete Mathematics, 
\newline
Ghent ~University, Belgium
}
\email{Marianna.Chatzakou@UGent.be}
%    \thanks will become a 1st page footnote.
%\thanks{The first author was supported in part by NSF Grant \#000000.}

\author{A. Tertikas}
\address{Department of Mathematics \& Applied Mathematics, University of Crete,
\newline 70013 Heraklion, Greece \newline
 Institute of Applied and Computational Mathematics,
FORTH, 71110 Heraklion, Greece}
\email{tertikas@uoc.gr}

\subjclass[2020]{Primary 26D10, 35R03; Secondary 
35A23, 35H20, 35J75}

\keywords{Geometric $L^p$-Hardy inequalities; gauge pseudodistance; Carnot-Carath\'eodory distance; Heisenberg group; stratified groups of step two; horizontal convexity}

%
%%\date{January 1, 2001 and, in revised form, June 22, 2001.}
%
%%\dedicatory{Preliminary Version}
%

\begin{abstract}
We prove $L^p$-Hardy inequalities with distance
to the boundary
for domains in the Heisenberg group $\H^n$, $n\geq 1$. 
Our results are based on a certain geometric 
condition. This is first
implemented for the Euclidean distance in certain
non-convex domains. It is then implemented
for the distance defined by the gauge quasi-norm
related to the fundamental solution of the horizontal
Laplacian when the domain is a half-space or a convex polytope.
Finally it is implemented for the Carnot-Carath\'eodory distance on
half-spaces and arbitrary bounded
convex domains of $\H^n$. In all cases the constant
$((p-1)/p)^p$ is obtained.
In the more general context of a stratified
Lie group
of step two we study the $p$-superharmonicity and
the weak $H$-concavity of the Euclidean distance
to the boundary, thus obtaining
an $L^p$-Hardy inequality 
on bounded convex domains which generalises previous results.
\end{abstract} 

 \maketitle

\section{Introduction} 
The classical $L^p$-Hardy inequality, $p>1$,
affirms that 
\[
\int_{\R^n} |\nabla u|^p\, {\rm d}x \geq \left|\frac{n-p}{p} \right|^p \int_{\R^n} \frac{|u|^p}{|x|^p}\,{\rm d}x\,,
\]
for $u \in C_{c}^{\infty}(\R^n\setminus\{0\})$, where the constant is sharp.

Another much studied type of Hardy inequality is where the
Hardy potential is the distance to the boundary of 
a reference domain. A well known such result
states that if
$\Omega \subset \R^n$ is a convex domain
and $d(x)={\rm dist}(x,\partial\Omega)$, then for any
$u\in\cic(\Omega)$ there holds
\[
\int_{\Omega} |\nabla u|^p\, \rd x\geq \left(\frac{p-1}{p} \right)^p \int_{\Omega}\frac{|u|^p}{d^p}\,\rd x\,,
\]
and the constant is sharp, cf. \cite{MS97}. In \cite{BFT04} the
convexity condition was replaced by the more general notion of weak mean convexity, namely the requirement
that $\Delta d \leq 0$ in the distributional sense in $\Omega$.
The above inequality is not valid without some geometric assumptions on $\Omega$ and for this
reason inequalities of this type are often called geometric Hardy inequalities.
The literature on geometric Hardy inequalities in Euclidean space is large and we 
refer the interested reader to the works 
\cite{Ba24,BEL15,Dav98,RS19} which provide an overview of the topic. 

On the other hand, subelliptic Hardy inequalities have been studied for quite a 
long time and the work \cite{GL90} of Garofalo and Lanconelli in the 90's opened 
up the research in this direction. By subelliptic Hardy inequalities, we mean 
Hardy-type inequalities considered in the setting of homogeneous Lie groups, and in 
particular stratified groups. 
The systematic analysis of homogeneous Lie groups
goes back to the seminal 
work \cite{FS82} by Folland and Stein where the authors establish the 
corresponding ``anisotropic'' non-commutative harmonic analysis, see also 
\cite{S93}. In view of their importance in the area of partial differential 
equations, stratified Lie groups, have been widely recognised as they play a key 
role in establishing subelliptic estimates for differential operators on general 
manifolds.

As in the Euclidean case, Hardy inequalities on 
stratified groups may involve
either the distance to a point or the distance to the 
boundary. Moreover, one may use the Euclidean 
distance, the Carnot-Carath\'eodory distance
or the distance related to the fundamental solution 
of the sub-Laplacian $\Delta_{H}$, often called the 
gauge pseudodistance.

Concerning the distance to a point case
we refer to \cite{CCR15,DA04b,GL90,GKY17,FP21,RS17,Y13}.
For an overview of the works in Hardy inequalities of all the above types we refer to the monograph \cite{RS19}; see also the survey article \cite{Su22}.

For Hardy inequalities involving the distance to the 
boundary the literature in the stratified setting
is limited and in most cases it involves the
Euclidean distance. In \cite{Lar16} 
 $\H^n$, S. Larson shows that
if $\Omega \subset \H^n$ is either a half space
or a convex (in the Euclidean sense) domain, then for any $u \in C_{c}^{\infty}(\Omega)$ we have 
\[
    \int_{\Omega} |\nabla_{\H^n}u|^2\,{\rm d}x \geq \frac{1}{4}\int_{\Omega} 
 \frac{|\nabla_{\H^n} d|^2}{d^2}u^2\,{\rm d}x\,,
\]
where $d(x)$, $x\in\Omega$, stands for the Euclidean 
distance to $\partial\Omega$
and the 
constant $1/4$ is the best possible. 
Later on in 
\cite{RSS20b} Ruzhansky et al. proved that if 
$\Omega$ is a half-space in $\Hei^n$ and $p>1$ then
\[
\int_{\Omega} |\nabla_{\Hei^n} u|^p {\rm d}x \geq 
    \Big( \frac{p-1}{p} \Big)^p
\int_{\Omega} \frac{|\nabla_{\Hei^n} d|^p}{d^p}|u|^p {\rm d}x  \; ,
\qquad u\in\cic(\Omega).
\]
A little earlier the same authors had showed that if $\Omega$ is
a convex domain in any stratified group then
\[
    \int_{\Omega} |\nabla_{H}u|^2\,{\rm d}x \geq \frac{1}{4}\int_{\Omega} 
 \frac{|\nabla_{H} d|^2}{d^2}u^2\,{\rm d}x\,,
\]
where $\nabla_{H}$ denotes the horizontal gradient \cite{RSS20a}.
For other results in this direction see also \cite{Rus18}; 
see also \cite{PRS19} for an alternative approach based
on a sub-Riemannian version of the Santal\'{o} formula.

Our main interest in this work is to prove subelliptic geometric Hardy inequalities with best constant
on the Heisenberg group $\H^n$, but also on
any stratified group of step two. Our approach
is based on the general method of \cite{BFT04} and
in particular in the $L^p$-superharmonicity
of the distance function. The general result is then implemented in different contexts.

Let us first consider some known results about Hardy inequalities
in Euclidean domains.
Assume that $U\subset\R^d$ is a bounded domain with $C^2$
boundary and let $p>1$.

One approach to study the Hardy inequality in $U$ is by using
the fact that if we consider functions supported in a small
neighbourhood of a boundary point, then the Hardy constant is
$((p-1)/p)^p$. However, if we try to pass from this local
inequality to a global one on $U$, then one ends up with the
inequality
\[
\int_{U} |\nabla u|^p {\rm d}x  + c\int_{U} |u|^p {\rm d}x
\geq 
    \Big( \frac{p-1}{p} \Big)^p
\int_{U} \frac{|u|^p}{d^p}{\rm d}x    \; ,
\qquad u\in\cic(U).
\]
for some $c>0$ depending on $U$. It is not possible to avoid the $L^p$
term unless some geometric assumption is made on $U$.

Indeed, for $p=2$, \cite{Dav95} Davies constructed certain planar sectors  
with Hardy constant smaller that $1/4$. By approximating such a 
sector by an increasing sequence of bounded smooth domains one concludes
the existence of bounded smooth domains with Hardy constant smaller
than $1/4$. In the same article and for dimension $d\geq 3$, Davies constructed conical domains
with Hardy constant arbitrarily small; again, this shows the
existence of bounded smooth domains with the same property.
Concerning the general case $p>1$, elemantary considarations show that when
the domain is the complement of a ball in $\R^d$ and $p>(d+1)/2$ then
the (global) Hardy constant is smaller than $((p-1)/p)^p$. A simple 
argument then
shows the existence of tori for which the same holds true.

It is a 
central question to find conditions
on the domain $U$ under which the Hardy constant is equal
to $((p-1)/p)^p$.

In the context of the Heisenberg group $\Hei^n$ one similarly expects
that in order to obtain the Hardy constant $((p-1)/p)^p$ some geometric
condition is necessary on the domain. 
Indeed, Proposition \ref{prop:non} 
below uses a lifting argument to show that each of the above examples for 
Euclidean domains induces a corresponding example in $\Hei^n$ for the 
Hardy inequality with Euclidean distance.
The main idea here is to lift the Euclidean domain $U\subset \R^n$ to the domain $U \times \R \subset \H^n$ in the Heisenberg group.

The condition introduced in this article, namely the $p$-horizontal
superharmonicity of the distance function, turns out to be a natural
condition. For the case of the Euclidean distance in a Carnot group
of step two it applies to any convex domain for any $p>1$, see
Section 4. But it is also valid to domains that are not necessarily 
convex as is seen in Section 2. Moreover it is applied, as already 
mentioned, to the Carnot-Carath\'eodory distance and the gauge
pseudodistance.

We now present the main results of this article.
The torus $T\subset\Hei^n$ defined in the next theorem is not
a convex domain but satisfies the geometric assumption
$\Delta_{p,\Hei^n}d\leq 0$ for any $p>1$.
The precise value of the constant $\beta(p,n)$
is given in Proposition \ref{prop:torus}.

\medskip

\noindent
{\bf Theorem A.} {\it Let $R>\rho>0$ and let $T$ denote the torus 
\[
T=\{ \xi= (x,y,t)\in \H^n : \; (r-R)^2 +t^2 <\rho^2 \}
\]
where $r=\sqrt{|x|^2+|y|^2}$.
For any $p>1$ there exists a positive constant
$\beta(p,n)$ such that if
\begin{align*}
\ia \qquad & 
R\geq  \rho +\Big(\frac{(2n-1)\rho}{4}\Big)^{\frac13} \\
\ib \qquad &  R\geq \beta(p,n)\rho , 
\end{align*}
then
\[
\Delta_{p,\Hei^n} \, d \leq 0\, \quad \text{in the
weak sense in}\,\, T\,,
\]
and
\[
\int_{T} |\nabla_{\H^n} u|^p {\rm d}\xi \geq 
\Big( \frac{p-1}{p} \Big)^p
\int_{T} \frac{|\nabla_{\H^n} d|^p}{d^p}|u|^p {\rm d}\xi  \; , \qquad u\in\cic(T).
\]
Moreover the constant $((p-1)/p)^p$ is sharp.}

\medskip

\black
Another class of distances in a stratified
group consists of those induced by a homogenous
quasi-norm.
In the case of the Heisenberg group $\Hei^n$
there are two particularly  important such (quasi)-norms. The  first is
\be
N(\xi) =\Big( ( |x|^2 +|y|^2)^2 +t^2 \Big)^{\frac14},
\qquad \xi=(x,y,t)\in\H^n.
\la{gauge_norm}
\ee
We note that $N^{2-Q}$, with $Q=2n+2$ being the homogeneous dimension of $\Hei^n$, is (up to a multiplicative constant) the fundamental
solution of the sub-Laplacian  on  $\H^n$ \cite{Fol73}.

A second important homogeneous quasi-norm, which respects the
sub-Riemannian geometry of $\H^n$,
is the one arising from the Carnot-Carath\'eodory distance, denoted here by $\rho$. 
Let us recall the definition of the Carnot-Carath\'eodory distance between two points $x,y \in \R^n$: For a family $\{X_1,\cdots,X_m\}$ of  vector fields, the Carnot-Carath\'eodory distance between $x,y$ is given by 
\begin{equation}\label{def:CCdista}
\rho(x,y)=\inf_{\gamma \in \mathcal{C}_{x,y}} \{ {\rm length}(\gamma)\}\,,    
\end{equation}
where $\mathcal{C}_{x,y}$ is the set of horizontal curves joining $x$ to $y$.
In the case of a stratified group when the set $\{X_1,\cdots,X_m\}$ coincides with the first stratum $V_1$ of its Lie algebra
(see Section \ref{sec:convexity} for related definitions) we always  have $d(x,y)< \infty$, and the  Carnot-Carath\'eodory distances
induced by different choices of $V_1$ are equivalent \cite[Section 1.3]{Pa89}.

Denoting by $d_N$ the distance to the boundary 
induced by $N$, cf. \eqref{ntor} below, we have the
following

\medskip

\noindent
{\bf Theorem B. }{\it (i)
Let $p>1$ and let $D \subset \H^n$ be a half-space.  There holds
\[
\int_{D} |\nabla_{\H^n} u|^p {\rm d}\xi \geq \left( \frac{p-1}{p}\right)^p
\int_{D} \frac{|\nabla_{\H^n} d_{N}|^p}{ d_N^p}|u|^p\,{\rm d}\xi \; , \qquad
u\in\cic(D)\,.
\]
(ii) In case $p=2$ the above inequality is also
valid for any bounded convex polytope. \newline
Moreover the constant is the best possible in both cases.}

\medskip
Denoting by $d_{\rho}$ the Carnot-Carath\'eodory distance (control distance) to the boundary, we obtain the following
\medskip

\noindent
{\bf Theorem C. }{\it Let $p>1$ and let $D \subset \H^n$ be a bounded and convex domain  or a half-space in $\H^n$.  There holds
\[
\int_{D} |\nabla_{\H^n} u|^p {\rm d}\xi \geq \left( \frac{p-1}{p}\right)^p
\int_{D} \frac{|u|^p}{ d_{\rho}^{p}}\,{\rm d}\xi \; , \qquad
u\in\cic(D)\,.
\]
In the case of the half-space the constant is sharp.}

\medskip

The appearance of the factor $|\nabla_{\H^n}d_N|$ in
Theorem B -- while a similar factor does not appear in Theorem C --
should be seen in the light of a result of \cite{MSC01} a
special case of which states that  $|\nabla_{\H^n}d_{\rho}|=1$
a.e. A more interesting difference between the above two homogeneous norms relates to nearest boundary points in the case
of a subspace $\Pi\subset\H^n$. The correspondence
\[
 \Pi \ni \xi \longleftrightarrow  (\xi' , s) \in \partial
 \Pi \times \R_+
\]
where $s={\rm dist}(\xi,\partial \Pi)= d(\xi,\xi')$ is
different in the two cases. In the case of the quasi-norm
\eqref{gauge_norm} the above is a simple 1-1 correspondence, as in 
the Euclidean case, cf. Proposition \ref{lem:coord}. However in the
case of the Carnot-Carath\'eodory distance the situation is
entirely different; see Proposition \ref{lem:coord2}.

In Section \ref{sec:convexity} we extend our setting 
to that of an arbitrary stratified group of step two. 
In this general setting, a certain notion of convexity
plays a central role. There are various notions of 
convexity in the sub-Riemannian setting and their
properties can vary significantly \cite{DGN03,DLZ24,LMS03}. Notably, in
\cite{MR03} R. Monty and M. Rickly prove
that in the case of the Heisenberg group $\Hei$,
if a set is geodesically convex and
contains at least three points that do not lie
on the same geodesic, then it necessarily coincides
with $\Hei$.

In our context the relevant notions
of convexity of sets and functions are the ones
introduced at the same time by Lu, Manfredi and 
Stroffolini in \cite{LMS03} on the 
Heisenberg group and by Danielli, Garofalo and Nhieu 
in \cite{DGN03} on any  stratified group. These 
notions are the analogues of the corresponding ones 
in the abelian case $\R^n$ but with a twist;
the condition refers to a convex combination of two 
elements $g,g'$ for which, 
additionally, $g' \in H_{g}$, i.e. $g'$ lies in the horizontal plane passing 
through $g$; see Section \ref{sec:Hg}
for the precise definitions.
Exploring properties of the so-called weakly $H$-concave functions
(see Definition \ref{def:H-convex.u}) and using a result
from \cite{DGN03} we prove the following theorem,
part (iii) of which generalizes \cite[Corollary 2.2]{RSS20b}.

\medskip

\noindent
{\bf Theorem D. }{\it
Let $G$ be a stratified group of step two
and let $\Omega\subset G$ be a bounded domain which is 
convex in the Euclidean sense. Then 
\begin{align*}
{\rm (i)} & \quad  \mbox{The Euclidean distance $d$ to
the boundary is weakly $H$-concave in $\Omega$} ;\\
{\rm (ii)} & \quad  \Delta_{p,H} d
\leq 0 \; \mbox{ in the distributional
sense in }\Omega ;\\
{\rm (iii)} & \quad \mbox{The Hardy inequality} \\[0.2cm]
& \hspace{1.5cm} \int_{\Omega} |\nabla_{H} u|^p {\rm d}g \geq 
\Big( \frac{p-1}{p}\Big)^p
\int_{\Omega} \frac{|\nabla_{H} 
d|^p}{d^p}|u|^p {\rm d}g  \; , 
\qquad u\in \cic(\Omega), \\[0.2cm]
& \quad \mbox{is valid.}
\end{align*}
}

\section{two general results on stratified groups}

A stratified (or Carnot) group $G\equiv \R^n$ is 
naturally a homogeneous Lie group.
Denoting by $\mathfrak{g}$ the corresponding Lie 
algebra we have $\dim(\mathfrak{g})=n$ and
$\mathfrak{g}$ admits a vector space decomposition  of the form
	 \begin{equation}
	 \label{def.carnot}
	 \mathfrak{g}=\bigoplus_{j=1}^{r} V_j\,,\quad \text{such that}\quad 
	 \left\{
	 \begin{array}{l}
	 [V_1,V_{i-1}] = V_{i}\,,
	 \quad 2\leq i\leq r,\\
	 
	 [V_1,V_r]=\{0\},
	 \end{array}
	 \right.
	 \end{equation}
where
\[
[V_i,V_j] ={\rm span}\{ [X,Y] : \; X \in V_i , \;  Y \in V_j\}.
\]
Such a stratification naturally equips $G$ with a
non-anisotropic dilation structure $\delta_\lambda:G \rightarrow G$, $\lambda>0$,
and makes $G$ a homogeneous Lie group. The vector 
spaces $V_i$ are called the strata of the Lie algebra 
$\mathfrak{g}$.
A symmetric homogeneous (quasi-)norm on $G$ is a function $N: G \rightarrow [0,
\infty)$ such that (i) $N(g)=0$ if and only if $g=e$, where $e$ is the identity 
element of $G$; (ii) $N(g)=N(g^{-1})$; and (iii)
$N(\delta_\lambda(g))=\lambda N(g)$.
In this article we shall use the term quasi-norm to indicate a symmetric 
homogeneous quasi-norm.

If $G$ is a stratified group, the system
$\{X_1,\ldots,X_m\}$, $m \leq n$, of vector fields in the first stratum  $V_1$ of $\mathfrak{g}$ generates, after iterated commutators, the whole of $\mathfrak{g}$, and so it is a system of H\"ormander vector fields on $\R^n$. The vector space spanned by $\{X_1,\ldots,X_m\}$ is  referred to as the horizontal hyperplane. 

The first-order vector-valued differential operator 
\[
\nabla_H =(X_1 ,\ldots, X_m)
\]
is then called the horizontal gradient on $G$
(or the subgradient on $G$).
Similarly ${\rm div}_H$ will denote the horizontal divergence given by
\[
{\rm div}_H (f_1,\ldots,f_m) = X_1f_1 +\ldots X_mf_m.
\]
The second-order differential operator
\[
\Delta_H =X_1^2+\ldots +X_m^2
\]
is called the horizontal Laplacian (or sublaplacian) 
on $G$ and is the sub-
Riemannian analogue of the Laplacian
on $\R^n$. By H\"ormander's 
Theorem, see \cite{Hor67}, the operator $\Delta_H$ is hypoelliptic. For $p>1$ we 
also have the associated
horizontal $p$-Laplacian given by
\[
\Delta_{p,H} \, u = {\rm div}_H  \big( | \nabla_H u|^{p-2} \nabla_H u  
\big).
\]
Finally, let us also recall that the (bi-invariant) Haar measure in the case of a stratified group is just, up to multiplication by a constant, the Lebesgue measure on the underlying manifold $\R^n$.

We first prove a general theorem which will be later applied in the
case of the Euclidean distance and of the pseudodistance induced
by the quasi-norm \eqref{gauge_norm}.

In what follows, we will say that a function is CC-Lipschitz
if it is Lipschitz with respect to the Carnot-Carath\'{e}odory
distance (equivalently, with respect to the distance
induced by any homogeneous
quasi-norm).

Part (a) of the next theorem is essentially
contained in \cite{RSS20b} but we
include the short proof of it because of its central
role in the present article.

\begin{theorem}\label{THM:Hardy,gen}
Let $G$ be a stratified group
and let $\Omega\subset G$ be open and connected. Let 
$p>1$ and let $d: \Omega \rightarrow (0,\infty)$ be a positive, 
locally CC-Lipschitz function. \newline
(a) Assume that
   \[
     \Delta_{p,H} \, d \leq 0\, \quad \text{in}\,\, \Omega\,,
   \]
where the inequality is understood in the distributional sense. Then 
     \[
          \int_{\Omega}|\nabla_{H}u|^p \ddx \geq 
   \Big( \frac{p-1}{p}\Big)^p     
     \int_{\Omega}\frac{|\nabla_H d|^p}{d^p}|u|^p\,\ddx\, ,
     \qquad u \in C_{c}^{\infty}(\Omega). 
     \]
   (b) Assume that there exist $x_0 \in \partial \Omega$ and  two
   neighbourhoods $A,A'$ of $x_0$ in $G$ with $A'\subset\subset A$ and such that 
\begin{align*}
{\rm (i)} & \mbox{ There exists $c>0$ such that  $|\nabla_H d| \geq c$
 in $A\cap \Omega$.}  \\[0.1cm]
{\rm (ii)} & \mbox{ The integral $\int_{A' \cap \Omega}d^{-1+\epsilon}$} \ddx
\mbox{ is finite for $\epsilon>0$ and diverges to $+\infty$ as $\epsilon\to 0$.}
\end{align*}
Then
\[
\inf_{u\in \cic(\Omega)} \frac{ \int_\Omega|\nabla_H u|^p \ddx}
{\int_\Omega\frac{|\nabla_H d|^p}{d^p}|u|^p\ddx}\leq 
\Big( \frac{p-1}{p}  \Big)^p .
\]
\la{thmm1}
\end{theorem}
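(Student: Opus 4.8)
The plan for part (a) is to run the vector-field/test-function method of \cite{BFT04} directly in the subelliptic setting, feeding in the hypothesis $\Delta_{p,H}d\le 0$ through its distributional definition. Recall that this hypothesis means precisely that
\[
\int_\Omega |\nabla_H d|^{p-2}\nabla_H d\cdot\nabla_H\varphi\,\ddx \ge 0 \qquad\text{for all } 0\le\varphi\in\cic(\Omega).
\]
Since $d$ is locally CC-Lipschitz we have $\nabla_H d\in L^\infty_{loc}(\Omega)$, hence $|\nabla_H d|^{p-2}\nabla_H d\in L^{p/(p-1)}_{loc}(\Omega)$; a mollification argument then extends the inequality above from smooth $\varphi$ to nonnegative, compactly supported \emph{Lipschitz} test functions. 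This is exactly what I need in order to insert the singular test function $\varphi=|u|^p d^{1-p}$, $u\in\cic(\Omega)$, which is Lipschitz, nonnegative and compactly supported because $d$ is continuous, positive, and therefore bounded away from $0$ on $\mathrm{supp}\,u$.

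Substituting this $\varphi$ and using $\nabla_H\varphi = p|u|^{p-2}u\,d^{1-p}\nabla_H u+(1-p)|u|^p d^{-p}\nabla_H d$, the divergence-form inequality rearranges, after discarding the sign-definite cross term through $|\nabla_H d\cdot\nabla_H u|\le|\nabla_H d||\nabla_H u|$, into
\[
(p-1)\int_\Omega \frac{|\nabla_H d|^p}{d^p}|u|^p\,\ddx \le p\int_\Omega |u|^{p-1}\Big(\tfrac{|\nabla_H d|}{d}\Big)^{p-1}|\nabla_H u|\,\ddx.
\]
Writing $W:=|\nabla_H d|^p/d^p$ and applying Young's inequality with conjugate exponents $p,\,p'=p/(p-1)$ and a free scaling parameter $\tau>0$ to the product $|\nabla_H u|\cdot\big(d^{-1}|\nabla_H d||u|\big)^{p-1}$, I obtain $\int_\Omega|\nabla_H u|^p\ge (p-1)\tau^p(1-\tau^{p'})\int_\Omega W|u|^p$. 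Optimizing the elementary map $\tau\mapsto\tau^p(1-\tau^{p'})$ (its maximum is attained at $\tau^{p'}=(p-1)/p$) produces exactly $((p-1)/p)^p$, which is (a). The only genuinely delicate point is the approximation legitimizing the non-smooth $\varphi$; the rest is an algebraic optimization.

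For part (b) the plan is to exhibit a family of almost-minimizers concentrating at the boundary point $x_0$. I fix a cutoff $\eta\in\cic(A)$ with $\eta\equiv 1$ on $A'$, $0\le\eta\le1$, and for small $\epsilon>0$ set $u_\epsilon=\eta\, d^{\beta_\epsilon}$ with $\beta_\epsilon=\frac{p-1}{p}+\epsilon$. A direct computation gives $\int_\Omega W|u_\epsilon|^p = \int_\Omega \eta^p |\nabla_H d|^p d^{-1+\epsilon p}\,\ddx$, which by hypothesis (i) and (ii) (applied with exponent $\epsilon p$) is finite for $\epsilon>0$ and diverges to $+\infty$ as $\epsilon\to0$. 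Expanding $\nabla_H u_\epsilon$ and using the elementary bound $(a+b)^p\le(1+\delta)a^p+C(\delta,p)b^p$, the numerator splits as
\[
\int_\Omega|\nabla_H u_\epsilon|^p\,\ddx \le (1+\delta)\beta_\epsilon^p\int_\Omega \eta^p|\nabla_H d|^p d^{-1+\epsilon p}\,\ddx + C(\delta,p)\int_\Omega |\nabla_H\eta|^p d^{\beta_\epsilon p}\,\ddx.
\]
The first term is exactly $(1+\delta)\beta_\epsilon^p$ times the denominator, while the second is bounded uniformly in $\epsilon$ because $\beta_\epsilon p\ge p-1>0$ makes $d^{\beta_\epsilon p}$ bounded on the bounded set $\mathrm{supp}\,\eta$. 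Dividing and letting $\epsilon\to0$, the uniformly bounded error is annihilated by the divergence of the denominator, so the Rayleigh quotient of $u_\epsilon$ has $\limsup$ at most $(1+\delta)((p-1)/p)^p$; since $\delta>0$ is arbitrary, this yields the stated upper bound for the infimum.

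The principal obstacle in (b) is not this computation but ensuring that $u_\epsilon$ — Lipschitz and vanishing on $\partial\Omega$, yet \emph{not} compactly supported inside $\Omega$ — can be replaced by genuine $\cic(\Omega)$ competitors without spoiling the quotient. I would resolve this by a two-step approximation: first multiply by a cutoff vanishing in a shrinking neighbourhood of $\partial\Omega$, which is harmless because $d^{\beta_\epsilon}$ already vanishes there ($\beta_\epsilon>0$), and then mollify; both perturbations are controlled by the local integrability of the integrands. Combined with the matching lower bound from part (a), this shows that $((p-1)/p)^p$ is the best constant whenever (i)--(ii) hold.
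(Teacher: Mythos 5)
Your proposal is correct and follows essentially the same route as the paper: part (a) is the vector-field/test-function method of \cite{BFT04} --- your direct insertion of $\varphi=|u|^p d^{1-p}$ into the distributional inequality followed by Young's inequality with an optimized parameter is exactly what the paper's choice $T=-\big(\tfrac{p-1}{p}\big)^{p-1}d^{1-p}|\nabla_H d|^{p-2}\nabla_H d$ accomplishes --- and part (b) uses the identical family $u_\epsilon=\eta\, d^{\frac{p-1}{p}+\epsilon}$ localized near $x_0$, with the denominator forced to diverge by hypotheses (i)--(ii). The differences are presentational only: your $(1+\delta)$-splitting versus the paper's cross-term bound $|a+b|^p\leq|a|^p+c_p(|a|^{p-1}|b|+|b|^p)$, and your explicit two-step approximation of $u_\epsilon$ by $\cic(\Omega)$ competitors where the paper invokes ``a standard argument.''
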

\proof First we note that in view of \cite[Theorem 2.5]{MSC01} $\nabla_H d$ exists a.e. in $\Omega$. Let $T$ be a
vector field in $L^1_{\rm loc}(\Omega)$ and
$u\in C^{\infty}_c(\Omega)$. Using an argument from 
\cite{BFT04}, with the only difference that 
differential operators are replaced
by the corresponding horizontal ones, we obtain that
\[
 \int_{\Omega}|\nabla_{H}u|^p \ddx \geq
 \int_{\Omega} \Big( {\rm div}_H T  -(p-1)|T|^{\frac{p}{p-1}}
 \Big) |u|^p  \ddx ,
\]
where ${\rm div}_H T$ is understood in the distributional sense.
We now make the particular choice
\[
 T= -\Big( \frac{p-1}{p}\Big)^{p-1} \frac{1}{d^{p-1}}
 |\nabla_H d|^{p-2} \nabla_H d.
\]   
For this choice we have
\[
{\rm div}_H T  -(p-1)|T|^{\frac{p}{p-1}} =
\Big( \frac{p-1}{p}\Big)^{p} \frac{|\nabla_H d|^p}{d^p} 
-\Big( \frac{p-1}{p}\Big)^{p-1}\,  \frac{1}{d^p} \, \Delta_{p,H}d \,,
\]
hence (a) follows.   

To  prove (b), let $\psi$ be a smooth cut-off function supported in $A$ and
satisfying $0\leq \psi\leq 1$ and $\psi(x)=1$ in  $A'$.
We fix $\epsilon>0$, which will
eventually tend to zero, and we define
\[
u_{\epsilon}(x) =d(x)^{\frac{p-1}{p} +\epsilon} \psi(x) ,
\qquad x\in \Omega.
\]
A standard argument shows that $u_{\epsilon}$ can be used as a test
function. Applying the elementary inequality
$|a+b|^p\leq |a|^p +c_p
(|a|^{p-1}|b| +|b|^p)$, $a,b\in\R^n$,
we obtain
\begin{align*}
|\nabla u_{\epsilon} |^p &=  \Big| \big( \frac{p-1}{p}+\epsilon \big)
 d^{-\frac{1}{p}
+\epsilon}\psi \nabla_H d  + d^{\frac{p-1}{p} +\epsilon} \nabla_H\psi \Big|^p \\
&\leq \! \Big( \frac{p-1}{p} +\epsilon \!\Big)^p \! d^{-1+\epsilon p}\psi^p
|\nabla_H d|^p \! +c_p' d^{\epsilon p}|\nabla_H d|^{p-1} |\nabla_H \psi|
+c_p d^{p-1+\epsilon p}|\nabla_H \psi|^p.
\end{align*}
It follows that
\begin{align*}
\int_\Omega |\nabla_H u_{\epsilon}|^p\ddx 
& \leq \Big( \frac{p-1}{p} +\epsilon \Big)^p 
\int_\Omega d^{-1+\epsilon p}\psi^p |\nabla_H d|^p \ddx  
\\
& \quad +c_p'\int_\Omega  d^{\epsilon p}|\nabla_H d|^{p-1} |\nabla_H \psi|
\, \ddx + c_p \int_\Omega d^{p-1+\epsilon p}|\nabla_H \psi|^p \ddx.
\end{align*}
The last two integrals stay bounded as $\epsilon\to 0$, so
\[
\int_\Omega |\nabla_H u_{\epsilon}|^p\ddx  \leq
\Big( \frac{p-1}{p} +\epsilon \Big)^p 
\int_\Omega d^{-1+\epsilon p}\psi^p |\nabla_H d|^p \ddx   +O(1).
\]
We also have
\[
\int_\Omega \frac{|\nabla_H d|^p}{d^p}u_{\epsilon}^p\ddx
=\int_{\Omega} |\nabla_H d|^p d^{-1+\epsilon p}\psi^p \ddx\,.
\]
Hence, since the last integral diverges to infinity as $\epsilon\to 0$, we arrive at
\[
\frac{ \int_\Omega|\nabla_H u_{\epsilon}|^p\ddx}
{\int_\Omega \frac{|\nabla_H d|^p}{d^p}u_{\epsilon}^p\ddx}
= \Big( \frac{p-1}{p} +\epsilon \Big)^p  +o(1) , \qquad \mbox{ as }
\epsilon \to 0.
\]
Letting $\epsilon \to 0$ concludes the proof.
\endproof

Given a quasi-norm
$N$ we may define a pseudodistance by
\[
d_N(x,y) = N(y^{-1}x) \; , \qquad x,y\in G.
\]

The following proposition allows us to
apply Theorem \ref{THM:Hardy,gen} 
in the case of the gauge quasi-norm
\eqref{gauge_norm}.

\begin{proposition}
    Let $S$ be a closed set in a stratified group $G$ and let $N$ be any quasi-norm on $G$ that is smooth out of the origin. Then the pseudodistance 
\[
d_{N,S}(g) =\inf_{b\in S} d_N(b,g) = \inf_{b\in S}
N(g^{-1}b)
\]
is CC-Lipschitz.
\end{proposition}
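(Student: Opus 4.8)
The plan is to exploit the elementary principle that an infimum of a family of functions that are \emph{uniformly} Lipschitz (with respect to a fixed metric) is itself Lipschitz with the same constant. The Carnot--Carath\'eodory distance $\rho$ is a genuine left-invariant metric, and $d_{N,S}(g)=\inf_{b\in S}F_b(g)$ with $F_b(g):=N(g^{-1}b)$; the infimum is finite, being non-negative and bounded above by $N(g^{-1}b_0)$ for any fixed $b_0\in S$. Hence it suffices to produce a constant $L$, \emph{independent of} $b$, such that every $F_b$ is $L$-Lipschitz with respect to $\rho$: the principle will then immediately yield $|d_{N,S}(g)-d_{N,S}(g')|\le L\rho(g,g')$, which is the assertion. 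Throughout I write $\|g\|_{cc}:=\rho(e,g)$ for the homogeneous gauge attached to $\rho$, so that $\rho(g,g')=\|g^{-1}g'\|_{cc}$ by left-invariance.

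Next I would reduce the uniform Lipschitz bound for the family $\{F_b\}$ to a single left-translation estimate for $N$. Given $g,g'$ and $b$, set $w:=g^{-1}b$ and $h:=(g')^{-1}g$, so that $(g')^{-1}b=hw$ and, by symmetry of $\|\cdot\|_{cc}$, $\|h\|_{cc}=\rho(g,g')$. Then $|F_b(g)-F_b(g')|=|N(w)-N(hw)|$, and the whole problem collapses to the scale-invariant claim
\[
|N(hw)-N(w)|\le L\|h\|_{cc},\qquad \text{for all } h,w\in G,
\]
with $L$ independent of $h$ and $w$. Both sides are homogeneous of degree one under the simultaneous dilation $(h,w)\mapsto(\delta_\lambda h,\delta_\lambda w)$: indeed $\delta_\lambda$ is a group automorphism, so $\delta_\lambda h\cdot\delta_\lambda w=\delta_\lambda(hw)$, while $N$ and $\|\cdot\|_{cc}$ are homogeneous of degree one. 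It therefore suffices to verify the claim on the compact slice $K=\{(h,w):\|h\|_{cc}+N(w)=1\}$, compactness following from properness of the two gauges.

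On $K$ I would split into two regimes according to the size of $\|h\|_{cc}$. If $\|h\|_{cc}\ge\eta$, then $\Psi(h,w):=|N(hw)-N(w)|$ is continuous on the compact set $K$, hence bounded by some $B$, giving $\Psi\le (B/\eta)\|h\|_{cc}$. If $\|h\|_{cc}<\eta$ with $\eta<1$ small, then $N(w)>1-\eta$, so $w$ --- and, for $\eta$ small, also $hw$ --- lie in a fixed compact subset of $G\setminus\{e\}$ on which $N$ is smooth; there the mean value theorem together with smoothness of the group multiplication yields $|N(hw)-N(w)|\le C|h|_{\mathrm{Eucl}}$, and since $|h|_{\mathrm{Eucl}}\lesssim\|h\|_{cc}$ whenever $\|h\|_{cc}\le 1$ (the CC gauge dominates the Euclidean one near the identity, because the $V_j$-components enter the gauge with exponent $1/j\le 1$), this is $\le C'\|h\|_{cc}$. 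Combining the two regimes gives the claim on $K$ with $L=\max(B/\eta,C')$, and homogeneity propagates it to all of $G$.

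The delicate point --- and the reason the naive ``distance to a set is $1$-Lipschitz'' argument does not apply --- is that $\rho$ is only \emph{left}-invariant whereas $d_{N,S}$ involves the right factor $b$ through $g^{-1}b$; the required uniformity of the Lipschitz constant over $b$ is thus genuine content, and the only place control could be lost is where $w$ approaches the singular point $e$ of $N$. The scaling normalization is precisely what resolves this: on the slice $K$ the locus $w=e$ can occur only together with $\|h\|_{cc}=1$, bounded away from $0$, so it is absorbed by mere boundedness, while the regime $\|h\|_{cc}\to 0$ forces $N(w)\to 1$ and hence $w$ into the region where $N$ is smooth. I expect verifying this separation of regimes, and the Euclidean-versus-CC gauge comparison near the identity, to be the only steps requiring care.
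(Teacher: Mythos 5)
Your proof is correct, but it takes a genuinely different route from the paper. The paper's argument is a two-line appeal to the generalized triangle inequality of \cite[Proposition 5.14.1]{BLU07}: there is $\beta\geq 1$ with $N(xy)\leq \beta N(x)+N(y)$ for all $x,y\in G$, which applied along a minimizing sequence for $d_{N,S}(g')$ gives $|d_{N,S}(g)-d_{N,S}(g')|\leq \beta N(g^{-1}g')$ directly; CC-Lipschitzness then follows from the equivalence of all homogeneous quasi-norms. Note that your key reduction, the uniform translation estimate $|N(hw)-N(w)|\leq L\,\|h\|_{cc}$, is precisely the Lipschitz reformulation of that quasi-triangle inequality (from $N(hw)\leq\beta N(h)+N(w)$ and $N(w)\leq\beta N(h^{-1})+N(hw)$ one gets $|N(hw)-N(w)|\leq\beta N(h)\lesssim\|h\|_{cc}$), so your dilation-and-compactness argument on the slice $K$ amounts to a self-contained reproof of it --- and indeed the standard proof of the quasi-triangle inequality is a similar homogeneity-plus-compactness argument. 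The trade-offs: your proof is self-contained and quantitative, and your closing discussion correctly identifies the genuine issue (the infimum-of-uniformly-Lipschitz-functions principle, left- vs.\ right-invariance, and the singularity of $N$ at $e$ being screened off by the normalization); the paper's proof is shorter and strictly more general, since the quasi-triangle inequality holds for any continuous homogeneous quasi-norm, so smoothness of $N$ away from the origin --- which you use essentially in the small-$\|h\|_{cc}$ regime via the mean value theorem --- is not actually needed for the conclusion. Your two-regime analysis on $K$ and the gauge comparison $|h|_{\mathrm{Eucl}}\leq C\|h\|_{cc}$ for $\|h\|_{cc}\leq 1$ (the $V_j$-components scaling with exponent $1/j\leq 1$) are both sound; one small point worth making explicit is that in the small-$h$ regime you should note, by continuity of multiplication and compactness, that the whole Euclidean segment $\{th\,w: t\in[0,1]\}$ stays in a fixed compact set avoiding $e$, uniformly in $(h,w)$, so the mean value theorem applies with a uniform gradient bound.
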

\begin{proof}
By the equivalence of all quasi-norms on a stratified group it is enough
to show that for $g,g'\in S$ we have 
\begin{equation}
    \label{claim.Lip}
    |d(g)-d(g')|\leq K d_N(g^{-1}g')\,,
\end{equation}
for some $K>0$. By \cite[Proposition 5.14.1]{BLU07} there exists $\beta \geq 1$
such that
\[
  N(xy)\leq \beta N(x)+N(y)\,,\quad 
 \mbox{ for all}\,\, x,y \in G\, .
\]
Now, let $(b_n),(b_n)'\subset S$
be such that 
\[
d(g)= \lim N(g^{-1}b_n)\,,\qquad \quad d(g')=
\lim N(g'^{-1}b_n')\,,
\]
We then have
\begin{eqnarray*}
    d(g)-d(g')  
    & \leq  & \liminf \big[ N(g^{-1}b_n')- N(g'^{-1}b_n') \big] \\
    & = & \liminf \big[ N(g^{-1}g'g'^{-1}b_n')- N(g'^{-1}b_n') \big] \\
    & \leq & \liminf \big[ \beta N(g^{-1}g')+ N(g'^{-1}b_n')-  N(g'^{-1}b_n') \big] \\
    & = &  \beta N(g^{-1}g') \, . 
\end{eqnarray*}
Similarly we can show that 
\begin{equation*}
 d(g')-d(g)   \leq C\beta \, d_{N}(g',g)\,,
\end{equation*}
and \eqref{claim.Lip} follows.
\end{proof}

\section{Geometric Hardy inequalities on the 
Heisenberg group}
\la{Sec3}

In the section we consider geometric Hardy 
inequalities on 
the Heisenberg group $\Hei^n$. In the first part we
consider the Euclidean distance and prove the 
validity of the Hardy inequality
with best constant on certain tori under suitable assumptions on
the radii. In the second part we 
study the geometric Hardy inequality for the pseudodistance induced by the 
quasi-norm \eqref{gauge_norm}.

We recall that the  Heisenberg group $\Hei^n$
is the manifold
\[
\Hei^n =\{ \xi=(x,y,t) : x,y\in\R^n , \; t\in\R\}
\]
equipped with the group operation
\[
\xi \xi' = \big( x+ x' , y+y' , t+t' +2(x\cdot y'-y\cdot x') \big).
\]
The left-invariant vector fields
\[
X_i =\partial_{x_i} +2y_i\partial_t \; , \qquad 
Y_i =\partial_{y_i} -2x_i \partial_t\,,\quad i=1,\ldots,n,
\]
form the canonical basis
of the first stratum and the associated horizontal gradient and horizontal Laplacian
on $\H^n$ are given respectively by
\[
\nabla_{\H^n}=(X_1,\ldots,X_n,Y_1,\ldots,Y_n),
\]
and 
\[
\Delta_{\H^n} =\sum_{i=1}^n (X_i^2+Y_i^2)\,.
\]
So in the current section we denote the horizontal gradient and Laplacian by $\nabla_{\H^n}$ and $\Delta_{\H^n}$, respectively, to emphasize that the obtained results refer to the particular case of $\H^n$.

\subsection{Hardy inequalities with respect to the Euclidean distance: remarks and counterexamples}

In this section we consider geometric Hardy
inequalities in domains $\Omega\subset\Hei^n$
with respect to the Euclidean distance.

The next proposition, when combined with the examples of
Euclidean domains for which the Hardy constant is
smaller than $((p-1)/p)^p$, shows the
one cannot expect that a bounded $C^2$ domain in $\Hei^n$
has constant $((p-1)/p)^p$ unless some
geometric condition is imposed.

\begin{proposition}
\la{prop:non}
Let $U\subset\R^{2n}$ be a domain. The Hardy constant of
$\Omega =U \times \R$ as a domain in $\Hei^n$ cannot exceed
the Hardy constant of $U$.
\end{proposition}
\proof Let $v\in C^{\infty}_c(U)$ and $\psi\in\cic(\R)$ be given.
We define
\[
u(x,y,t) =v(x,y)\psi(t) \; , \qquad (x,y)\in U , \; t\in\R,
\]
A simple computation gives that for any $\epsilon>0$,
\begin{align*}
|\nabla_{\Hei^n}u|^2 & = |\nabla v|^2\psi^2 +4r^2v^2 (\psi')^2+
4 v\psi\psi' \sum_{k=1}^n (y_k v_{x_k}-x_k v_{y_k}) \\
& \leq (1+\epsilon)|\nabla v|^2\psi^2 +4(1+ \frac{1}{\epsilon})r^2v^2
(\psi')^2.
\end{align*}
Combining this with the elementary inequality
\[
|a+b|^q \leq (1+\epsilon)|a|^q +k_{\epsilon}|b|^q  \; , \quad \epsilon>0,
\]
we obtain that for any $\epsilon>0$ there exists $c_{\epsilon}>0$ such that
\[
|\nabla_{\Hei^n}u|^p \leq (1+\epsilon)|\nabla v|^p |\psi|^p
+c_{\epsilon} r^p |v|^p |\psi'|^p \; , \quad \mbox{ in }U^*.
\]
Since $d_{\Omega}(x,y,t) =d_U(x,y)$, we conclude that
\begin{align*}
&\frac{ \int_{\Omega} |\nabla_{\Hei^n}u|^p {\rm d}\xi}
{ \int_{\Omega} \frac{|u|^p}{d^p}{\rm d}\xi}
\leq (1+\epsilon) \frac{ \int_U |\nabla v|^p dx \, dy}{
{\int_U \frac{|v|^p}{d^p}dx\, dy}}
+ c_{\epsilon}\frac{ \int_U r^p|v|^p dx \, dy}{ \int_U |v|^p dx \, dy}
\cdot \frac{ \int_{-\infty}^{\infty} |\psi'|^p dt \, dy}{ 
 \int_{-\infty}^{\infty}|\psi|^p dt}.
\end{align*}
The result follows by noting that the quotient involving $\psi$ can be made arbitrarily small.
\endproof

We shall now consider the case of the torus $T$ which is not
a convex domain and has been discussed in Theorem A.
\black
For $u\in C^2(\H^n)$ we have
\be
\Delta_{\H^n}u = \sum_{i=1}^n(u_{x_i x_i} +u_{y_i y_i})
 +4(|x|^2+|y|^2)u_{tt} - 2\sum_{i=1}^n(y_i u_{x_i t} -x_i
u_{y_it})
\la{eq:0}
\ee
We now use cylindrical coordinates $(r,\omega,t)$ in $\H^n$, that is spherical coordinates
$(r,\omega)$ in $\R^{2n}$,
\[
(x,y)=r\omega  \; , \quad  r>0 , \; \omega\in \sph^{2n-1},
\]
where $\sph^{2n-1}$ denotes the unit sphere in
$\R^{2n}$.
The Euclidean gradient in $\R^{2n}$ is then given by
\[
\nabla u = u_r \, \omega +\frac{1}{r}\nabla_{\omega} u.
\]
Suppose now that a function $u\in C^2(\H^n)$ is independent of $\omega$, that is $u=u(r,t)$. In this case $\nabla u =u_r \, \omega$, so
\[
\sum_{i=1}^n(y_i u_{x_i} -x_i u_{y_i}) =
\sum_{i=1}^n(y_i \frac{u_r}{r}x_i   -x_i \frac{u_r}{r}y_i ) =0.
\]
Hence, for such functions, \eqref{eq:0} gives
\[
\Delta_{\H^n}u =u_{rr} +\frac{2n-1}{r}
u_r + 4r^2u_{tt} .
\]

Consider now a torus $T\subset\Hei^{n}$ which is symmetric with
respect to the $t$-axis and is centered at the origin. 
Letting $R,\rho$ ($R>\rho$) denote the two radii,
$T$ is  described in cylindrical coordinates as
\be
T=\{ \xi=(r,\omega,t) : \; (r-R)^2 +t^2 <\rho^2 \}.
\la{torus}
\ee
The Euclidean distance to the boundary is given by
\[
d(\xi)= \rho -\sqrt{(r-R)^2 +t^2}, \qquad \xi\in T,
\]
and is smooth in $T$ except on the
$(2n-1)$-dimensional `circle'
\[
S=\{\xi=(r,\omega,t) : \; t=0, \; r=R\}.
\]
The
horizontal Laplacian of $d$ is then given by
\be
\Delta_{\Hei^{n}}d=d_{rr} +\frac{2n-1}{r} d_r + 4r^2d_{tt},
\qquad \mbox{ in }T\setminus S.
\la{eq:1}
\ee
In $T\setminus S$ we have
\begin{align*}
d_r &= - \frac{ r-R}{\sqrt{(r-R)^2 +t^2}} \\
d_{rr} &= -\big((r-R)^2 +t^2\big)^{-\frac32} t^2 \\
d_{tt} &= -\big((r-R)^2 +t^2\big)^{-\frac32} (r-R)^2 .
\end{align*}
Substituting in \eqref{eq:1} we conclude that in $T\setminus S$ 
there holds
\begin{align}
 \Delta_{\Hei^n}d  =&  -\big((r-R)^2 +t^2\big)^{-\frac32} \frac{1}{r}
\nonumber \\
& \times \bigg\{
 \Big[ (2n-1)(r -R) +4r^3  \Big](r-R)^2 + 
 \Big[ r+(2n-1)(r -R)\Big] t^2
\bigg\}.
\la{d}
\end{align}

\begin{proposition}
\la{prop:torus}
Let $p>1$. Let $R>\rho>0$ and let $T$ be the torus
\eqref{torus}. Then there exists a positive constant
$\beta(p,n)$ such that if
\begin{align*}
\ia \qquad & 
R\geq  \rho +\Big(\frac{(2n-1)\rho}{4}\Big)^{\frac13} \\
\ib \qquad &  R\geq \beta(p,n)\rho , 
\end{align*}
then $\Delta_{p,\Hei^{n}}d\leq 0$ in $T\setminus S$. Moreover we
can take
\[
\beta(p,n)= \left\{
\begin{array}{ll}
\max\Big\{   \frac{2n+p-2}{p-1} \, , \;\frac{2n-p+1}{2(2-p)} 
 \Big\} , & \mbox{ if } 1< p<2, \\
2n \, & \mbox{ if } p=2, \\
2n+p-2  \, & \mbox{ if } p\geq\frac{ 13+ \sqrt{32n-7}}{8}.
\end{array}
\right.
\]
whereas for $2< p <\frac{ 13+ \sqrt{32n-7}}{8}$ we have
$\beta(p,n)=
1+\frac{1}{a(p,n)}$, where $a(p,n)$ is
the positive solution of
\[
(2n+p-3)^2 a^2 
  +4\Big(  (p-2)(2n+p-3) +(p-1)(2n-1) \Big)a  -4(2p-3) =0
\]
\end{proposition}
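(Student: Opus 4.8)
The plan is to compute $\Delta_{p,\mathbb{H}^n}d$ explicitly on $T\setminus S$ and reduce the desired inequality to a purely algebraic one. Writing the operator in the form
\[
\Delta_{p,\mathbb{H}^n}d = |\nabla_{\mathbb{H}^n}d|^{p-2}\Big( \Delta_{\mathbb{H}^n}d + \tfrac{p-2}{2}\,|\nabla_{\mathbb{H}^n}d|^{-2}\,\nabla_{\mathbb{H}^n}(|\nabla_{\mathbb{H}^n}d|^2)\cdot\nabla_{\mathbb{H}^n}d \Big),
\]
and noting that the prefactor $|\nabla_{\mathbb{H}^n}d|^{p-2}$ is strictly positive on $T\setminus S$, it suffices to control the sign of the bracket. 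Since $d=d(r,t)$, I would first record that for functions of $(r,t)$ one has $\nabla_{\mathbb{H}^n}u\cdot\nabla_{\mathbb{H}^n}v=u_rv_r+4r^2u_tv_t$, and in particular $|\nabla_{\mathbb{H}^n}d|^2=d_r^2+4r^2d_t^2$; this follows directly from the expressions for $X_i,Y_i$ and reduces every term to ordinary $(r,t)$-derivatives of $d=\rho-\sqrt{(r-R)^2+t^2}$.

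Substituting these derivatives together with the value of $\Delta_{\mathbb{H}^n}d$ from \eqref{d} and clearing the positive factors $r\big((r-R)^2+t^2\big)^{3/2}$ and $2|\nabla_{\mathbb{H}^n}d|^2$, the inequality $\Delta_{p,\mathbb{H}^n}d\le 0$ becomes $\Phi\ge 0$ for an explicit polynomial $\Phi$. Regarded as a function of $\tau:=t^2$ with $r$ (equivalently $R$) fixed, $\Phi$ is a \emph{quadratic} $\Phi(\tau)=a_2\tau^2+a_1\tau+a_0$, and a direct computation gives
\[
a_0=2\big[(2n-1)(r-R)+4r^3\big](r-R)^4,\qquad a_2=8r^2\big[r+(2n+p-3)(r-R)\big],
\]
with $a_1$ a longer explicit polynomial in $r$ and $r-R$. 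The value $\tau=0$ is the slice $t=0$, on which the correction term vanishes identically and the bracket equals $\Delta_{\mathbb{H}^n}d$; thus $a_0\ge 0$ is exactly the requirement $(2n-1)(r-R)+4r^3\ge 0$, whose worst case over $T$ (namely $r=R-\rho$) is precisely hypothesis $\ia$.

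It then remains to guarantee $\Phi\ge 0$ throughout $T\setminus S$. With $a_0\ge 0$ secured by $\ia$, I would analyse the quadratic through its leading coefficient and discriminant. The condition $a_2\ge 0$ on the whole torus reads $r+(2n+p-3)(r-R)\ge 0$ and, at the worst case $r=R-\rho$, is equivalent to $R\ge(2n+p-2)\rho$; this yields the branch $\beta(p,n)=2n+p-2$. When $p=2$ the operator is the ordinary horizontal Laplacian and $\Delta_{p,\mathbb{H}^n}d\le 0$ collapses to the affine condition $2nr-(2n-1)R\ge 0$ coming from the $t^2$-coefficient in \eqref{d}, i.e.\ to $R\ge 2n\rho$, giving $\beta=2n$. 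For $p>2$ one must additionally prevent $\Phi$ from dipping below zero, that is ensure $a_1\ge 0$ or $a_1^2\le 4a_0a_2$; writing $\beta=1+1/a$ (so that $a=\rho/(R-\rho)$) turns the discriminant condition at the extremal configuration into the stated quadratic in $a$. The threshold $p^\ast=\tfrac{13+\sqrt{32n-7}}{8}$ is exactly where the two branches meet: inserting $a=1/(2n+p-3)$, which corresponds to $\beta=2n+p-2$, into that quadratic reduces to $4p^2-13p+11-2n=0$, whose positive root is $p^\ast$. Hence for $p\ge p^\ast$ the coefficient condition already suffices, whereas for $2<p<p^\ast$ the discriminant condition is strictly stronger and determines $\beta$. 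Finally, for $1<p<2$ the sign of $p-2$ reverses the role of the correction term, and the two competing requirements (nonnegativity of a linear coefficient and a discriminant-type bound) produce the two expressions inside the maximum.

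The main difficulty is not the reduction but the sign analysis of $\Phi$: locating the extremal configuration over the two-parameter family $(r,\tau)$ inside the torus and verifying that there the discriminant condition both collapses to the displayed quadratic in $a$ and matches the coefficient branch precisely at $p^\ast$. Keeping track of the unwieldy coefficient $a_1$, and checking that no interior critical point of $\Phi$ violates the inequality outside the claimed range of $\beta$, is the technical heart of the argument; the clean matching identity $4p^2-13p+11-2n=0$ at $p^\ast$ serves as the structural check that the case distinction is internally consistent.
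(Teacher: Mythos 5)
Your setup coincides with the paper's: the same expansion of $\Delta_{p,\Hei^n}d$, the same identity $|\nabla_{\Hei^n}d|^2=d_r^2+4r^2d_t^2$, the same quadratic-in-$t^2$ polynomial (your $a_0,a_1,a_2$ are exactly twice the paper's $C_0,C_1,C_2$), the same matching of hypothesis \ia\ with $a_0\geq 0$ and of $R\geq(2n+p-2)\rho$ with $a_2\geq 0$, and the same $p=2$ case. The genuine gap is in the only step that actually produces the stated $\beta(p,n)$ for $p\neq 2$: you propose to handle the middle coefficient via the discriminant condition $a_1^2\leq 4a_0a_2$ of $\Phi$ as a quadratic in $\tau=t^2$, and you assert that ``at the extremal configuration'' this collapses to the displayed quadratic in $a$. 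That assertion is unproved and is in fact not what happens. The paper never takes a discriminant of $\Phi$ in $\tau$ at all: it proves $a_1\geq 0$ directly, by reducing $C_1\geq 0$ on $[R-\rho,R+\rho]$ to finding a constant $a>0$ with
\[
\frac{16(p-1)r^4-8(p-2)r^2+p-1}{2n-1+4(2n+p-3)r^2}\;\geq\; a \qquad \mbox{for \emph{all} } r>0,
\]
i.e.\ to the quartic inequality \eqref{poio}, and then requiring $R\geq(1+1/a)\rho$. The stated quadratic in $a$ is the vanishing of the discriminant of that auxiliary quartic \emph{in the variable $r^2$} (used when $p>2$, where the $r^2$-coefficient is negative); for $1<p<2$ one instead chooses $a$ so that all coefficients of the quartic are nonnegative, which is exactly where the two expressions inside the maximum come from --- not from ``a linear coefficient plus a discriminant-type bound'' of $\Phi$.

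To see concretely that your mechanism cannot yield the statement: at $r=R-\rho$ with $a=\rho/(R-\rho)$, writing $Q_a(x)=16(p-1)x^2-[8(p-2)+4a(2n+p-3)]x+p-1-(2n-1)a$, your condition $a_1^2\leq 4a_0a_2$ reduces after cancellation to
\[
Q_a(r^2)^2 \;\leq\; 16\,r^2\,\big[4r^2-(2n-1)a\big]\,\big[1-(2n+p-3)a\big],
\]
which still depends on $r$ and is therefore not equivalent to the $r$-free quadratic in the proposition; the paper's quadratic is $r$-free precisely because it is the discriminant of $Q_a$ over all $r^2>0$, not a pointwise condition. Moreover, nothing in your argument identifies the worst case of $a_1^2-4a_0a_2$ over $r\in[R-\rho,R+\rho]$ (it need not be $r=R-\rho$), and you yourself defer this extremal analysis as ``the technical heart'' --- but that is exactly the content of the proposition. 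The endpoint check that $a=1/(2n+p-3)$ turns the quadratic into $4p^2-13p+11-2n=0$ is correct, but it only confirms where the two branches of $\beta$ meet; it does not derive either branch. As it stands, the proposal is a correct reduction plus an unverified (and, as the computation above shows, incorrectly attributed) claim about how the constants arise.
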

\proof Let $A =|\nabla_{\Hei^{n}}d|^2$.
We then have 
\be
\Delta_{p,\Hei^{n}} d  
= A^{\frac{p-4}{2}} \Big( A \, \Delta_{\Hei^{n}}d
+ \frac{p-2}{2} (d_r A_r +4r^2 d_t A_t) \Big).
\la{tor1}
\ee
Now, simple computations give
\be
A =d_r^2
+4r^2d_t^2 =\frac{ (r-R)^2 +4r^2t^2}{ (r-R)^2 +t^2}.
\la{a15}
\ee
and
\[
A_r = \frac{ 2(r-R)(1-4rR)t^2 +8rt^4}{ \big( (r-R)^2 +t^2 \big)^2}, \qquad 
A_t= \frac{ 2t(r-R)^2 (4r^2-1)}{ \big( (r-R)^2 +t^2 \big)^2}.
\]
Hence
\[
d_r A_r +4r^2 d_t A_t = -\Big( (r-R)^2+t^2\Big)^{-\frac52}
\bigg\{
(r-R)^2t^2(2-8r^2-8rR+32r^4) +8r(r-R)t^4
\bigg\}.
\]
Substituting in \eqref{tor1} and recalling \eqref{d} we
obtain after some more computations that
\[
\Delta_{p,\Hei^{n}} d 
= - \, A^{\frac{p-4}{2}}
\big( (r-R)^2 +t^2 \big)^{-\frac52} W \, ,
\]
where
\begin{align*}
W &= \frac{1}{r}
 \Big( (r-R)^2+4r^2t^2 \Big)
   \bigg(  (r-R)^2 \Big[ (2n-1)(r -R) +4r^3\Big]  \\
& \quad   \hspace{3.5cm} +
\Big[ (2n-1)(r -R) +r \Big] t^2  \bigg)  \\
&  \qquad  + (p-2)\bigg(
(r-R)^2 \Big[  1-4rR -4r^2 +16r^4  \Big]t^2
+4r(r-R)t^4
\bigg) .
\end{align*}
In case $p=2$ we note that our assumption 
implies that
\[
 (2n-1)(r -R) +4r^3 \geq 0 \; , \quad
 (2n-1)(r -R) +r \geq 0
\]
in $T$, hence $W\geq 0$, as required.

For $p\neq 2$ we collect similar powers of $t$ to obtain
\begin{align*}
rW &=  (r-R)^4 \Big(  (2n-1)(r-R) +4r^3 \Big)  \\
& \quad + (r-R)^2 \bigg\{ 
\Big(  2n-1 +4(2n+p-3)r^2 \Big)(r-R) \\
& \quad + 16(p-1)r^5 -8(p-2)r^3 +(p-1)r\bigg\}t^2 \\
& \quad +4r^2\Big( (2n+p-3)(r-R) +r\Big)t^4 \\
&=: C_0 +C_1 t^2 +C_2 t^4.
\end{align*}
Assumption (i) implies that $C_0\geq 0$. Similarly, assumption
(ii) implies that $R\geq (2n+p-2)\rho$ in all cases
and therefore $C_2\geq 0$.

We shall prove that $C_1\geq 0$. Equivalently, that
\[
r-R +r \, \frac{ 16(p-1)r^4 -8(p-2)r^2+p-1}{2n-1 +4(2n+p-3)r^2} \geq 0, \quad \mbox{ for all }  R-\rho \leq r \leq R+\rho.
\]
For this we shall find a positive constant $a=a(p,n)$
such that
\[
\frac{ 16(p-1)r^4 -8(p-2)r^2+p-1}{2n-1 +4(2n+p-3)r^2} \geq a \; , \quad r >0,
\]
or equivalently
\be
16(p-1)r^4 - \Big(8(p-2)+4a(2n+p-3)\Big)r^2 + p-1 -a(2n-1) \geq 0 \, ,
\;  \quad r >0.
\la{poio}
\ee
If such an $a$ has been found then we shall have $C_1\geq 0$
provided the radii of $T$ satisfy
\[
 r-R+ar\geq 0 , \quad \mbox{ for all }r\in [R-\rho,R+\rho],
\] 
which is equivalent to
\[
R \geq (1+ \frac{1}{a} )\rho.
\]
At this point we need to distinguish different cases.

\smallskip

\noindent
{\it Case $1<p<2$.} In this case we choose
\[
a =a_1(p,n):=\min\Big\{ \frac{p-1}{2n-1} , \; \frac{2(2-p)}{2n+p-3}   \Big\}
\]
which makes all coefficients in \eqref{poio} non-negative.
The requirement on the radii then is
\[
R \geq \Big(1+ \frac{1}{a_1(p,n)} \Big)\rho
=  \max\Big\{   \frac{2n+p-2}{p-1} \, , \;\frac{2n-p+1}{2(2-p)} 
 \Big\} \, \rho ,
\]
and it is satisfied by our assumptions.

\smallskip

\noindent
{\it Case $p>2$.}
In this case the coefficient of $r^2$
in \eqref{poio} is negative, so
we consider the discriminant. We have
\begin{align*}
& \Big(8(p-2)+4a(2n+p-3)\Big)^2-64(p-1)\Big(   p-1 -a(2n-1) \Big) \\
 = & 16\bigg\{
 -4(2p-3) +4\Big(  (p-2)(2n+p-3) +(p-1)(2n-1) \Big)a
 + (2n+p-3)^2 a^2 
 \bigg\}.
\end{align*}
We now choose $a=a(p,n)$ to be the positive root of the
quadratic polynomial above. So the requirement on the radii
for \eqref{poio} is
\[
R \geq \Big(1+ \frac{1}{a(p,n)} \Big)\rho 
\]
and $\beta(p,n)$ is given by
\[
\beta(p,n)=
\max\Big\{   2n+p-2 \, , \;1+ \frac{1}{a(p,n)}
 \Big\}.
\]
Finally we to note that for $p>2$,
\begin{align*}
2n+p-2 \geq  1+ \frac{1}{a(p,n)} & \; \Longleftrightarrow \;
a(p,n) \geq \frac{1}{2n+p-3} \\
& \; \Longleftrightarrow \;  4p^2-13p +11-2n\geq 0 \\
& \; \Longleftrightarrow \; p\geq \frac{13+\sqrt{32n-7}}{8}.
\end{align*}
This completes the proof.
\endproof

\begin{remark}
For $1<p<2$ we have that
\[
\frac{2n-p+1}{2(2-p)} \geq 2n+p-2  \quad \mbox{ iff }
\quad 
 \frac32 \leq p<2 .
\]
Moreover if we define
$p_0 =\sqrt{9n^2-8n+2} -3(n-1)$
then $3/2 <p_0<2$ and for $1<p<2$ we have
\[
\max\Big\{   \frac{2n+p-2}{p-1} \, , \;\frac{2n-p+1}{2(2-p)} \Big\} =
\darr{  \frac{2n+p-2}{p-1},}{ \mbox{ if }1<p\leq p_0,}
{\frac{2n-p+1}{2(2-p)},}{ \mbox{ if }p_0\leq p< 2.}
\]
\end{remark}

\begin{theorem}
\label{THM:torus}
Let $R>\rho>0$ and let $T$ denote the torus \eqref{torus}.
Let $p>1$ and assume that  conditions (i) and (ii)
of Proposition \ref{prop:torus} are satisfied.
Then
\begin{align*}
\ia & \qquad \mbox{There holds
$\Delta_{p,\Hei^{n}}d\leq 0$ in the distributional
sense in $T$.} \\
\ib & \qquad \mbox{For any $u \in C_{c}^{\infty}(T)$ there holds}  \\
&  \hspace{1.8cm}
  \int_{T} |\nabla_{\H^n} u|^p {\rm d}\xi \geq 
    \Big( \frac{p-1}{p} \Big)^p
\int_{T} \frac{|\nabla_{\H^n} d|^p}{d^p}|u|^p {\rm d}\xi .
\end{align*}
Moreover the constant in (ii) is the best 
possible. 
\end{theorem}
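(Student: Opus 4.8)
The plan is to deduce Theorem~\ref{THM:torus} from the combination of Proposition~\ref{prop:torus} and the two parts of Theorem~\ref{THM:Hardy,gen}, treating the three assertions separately.

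For part \ia, Proposition~\ref{prop:torus} already gives the pointwise differential inequality $\Delta_{p,\Hei^n}d\leq 0$ in the smooth region $T\setminus S$; the only issue is to upgrade this to a distributional inequality across the singular set $S=\{t=0,\,r=R\}$. First I would note that $d$ is Lipschitz (indeed CC-Lipschitz) and smooth away from the $(2n-1)$-dimensional surface $S$, which has Lebesgue measure zero in $\R^{2n+1}$. The key point is that when we test $\Delta_{p,\Hei^n}d$ against a nonnegative $\varphi\in\cic(T)$, integration by parts produces, besides the volume term $\int_{T\setminus S}(\Delta_{p,\Hei^n}d)\varphi\,\rd\xi\leq 0$, a possible boundary contribution along $S$ coming from the jump of the conormal component of $|\nabla_{\Hei^n}d|^{p-2}\nabla_{\Hei^n}d$. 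The geometry here is favorable: $d$ attains its \emph{maximum} value $\rho$ along $S$, so $d$ has a ridge (concave corner) there. I would verify that the one-sided normal derivatives of $d$ across $S$ point inward in such a way that the jump term has the correct sign, i.e.\ that the singular part of $\Delta_{p,\Hei^n}d$ is a nonpositive measure supported on $S$. Concretely, writing $s=\sqrt{(r-R)^2+t^2}$ so that $d=\rho-s$, the relevant flux is controlled by $-|\nabla_{\Hei^n}d|^{p-2}\partial_\nu d$ summed over the two sides, and since $d$ decreases away from $S$ in every direction this contribution is $\leq 0$. This is the standard mechanism by which distance functions to convex-type sets are superharmonic across their ridge, and I expect it to be the main technical obstacle of the proof.

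For part \ib, once \ia is established the inequality is immediate: the hypotheses of Theorem~\ref{THM:Hardy,gen}(a) are met, since $d$ is positive and locally CC-Lipschitz on $T$ and satisfies $\Delta_{p,\Hei^n}d\leq 0$ distributionally, so applying Theorem~\ref{THM:Hardy,gen}(a) directly yields
\[
\int_{T}|\nabla_{\H^n}u|^p\,\rd\xi\geq\Big(\frac{p-1}{p}\Big)^p\int_{T}\frac{|\nabla_{\H^n}d|^p}{d^p}|u|^p\,\rd\xi,\qquad u\in\cic(T).
\]

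For the sharpness of the constant I would invoke Theorem~\ref{THM:Hardy,gen}(b), so it remains to exhibit a boundary point $x_0\in\partial T$ and nested neighborhoods $A'\subset\subset A$ verifying conditions (i) and (ii) there. I would pick $x_0$ to be a generic smooth point of $\partial T$ away from any degeneracy, i.e.\ a point where $r>0$; near such a point $d$ is smooth and, from the explicit formula \eqref{a15}, $|\nabla_{\Hei^n}d|^2=A=\big((r-R)^2+4r^2t^2\big)/\big((r-R)^2+t^2\big)$ stays bounded below by a positive constant, giving condition (i). For condition (ii), near a smooth boundary point $d$ behaves like the ordinary Euclidean distance to a smooth hypersurface, so in suitable local coordinates $\int_{A'\cap T}d^{-1+\epsilon}\,\rd\xi$ reduces to a one-dimensional integral $\int_0^\delta s^{-1+\epsilon}\,\rd s$ times a bounded factor, which is finite for $\epsilon>0$ and diverges as $\epsilon\to 0$. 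Both conditions then hold, and Theorem~\ref{THM:Hardy,gen}(b) gives the matching upper bound $((p-1)/p)^p$ for the infimum, establishing optimality. The only care needed here is choosing $x_0$ so that the horizontal gradient $\nabla_{\Hei^n}d$ does not degenerate; the point $r=0$ on the $t$-axis (where the coefficient $r$ in front of the $t$-derivatives vanishes) and the ridge $S$ should both be avoided, and away from these the estimates are routine.
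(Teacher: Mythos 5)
Your proposal is correct and, for part (ii) and the optimality claim, it coincides with the paper's proof: (ii) follows from Theorem \ref{THM:Hardy,gen}(a) once (i) is known, and sharpness follows from Theorem \ref{THM:Hardy,gen}(b), with condition (i) there supplied by \eqref{a15} and condition (ii) by the standard behaviour of the distance near a smooth boundary point (the paper cites \cite[Lemma 5.2]{BFT04}). The only real divergence is in part (i), where your geometric picture needs a correction even though your conclusion survives. You treat $S$ as a ridge with ``two sides'' across which the field $|\nabla_{\Hei^n}d|^{p-2}\nabla_{\Hei^n}d$ has a signed jump; but $S=\{r=R,\ t=0\}$ is a $(2n-1)$-dimensional sphere inside the $(2n+1)$-dimensional group, i.e.\ it has \emph{codimension two}: its complement is locally connected, there are no two sides, and no jump term arises. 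The paper exploits precisely this codimension: it multiplies the test function by a cutoff $\psi_\epsilon$ vanishing on the $\epsilon$-neighbourhood of $S$ and bounds the error by $\tfrac{c}{\epsilon}\,|\{\epsilon<q<2\epsilon\}|\leq c_1\epsilon\to 0$, so no sign information about any flux is needed. Your sign argument can nonetheless be made rigorous in tube form: integrate by parts on $T\setminus N_\epsilon$, where $N_\epsilon$ is the tube of radius $\epsilon$ about $S$; on $\partial N_\epsilon$ the outer normal of $T\setminus N_\epsilon$ is $\nu=\nabla d$ (since $d=\rho-q$ attains its maximum on $S$), whence $\nabla_{\Hei^n}d\cdot\nu_{H}=|\nabla_{\Hei^n}d|^2\geq 0$ and the boundary term has the favourable sign you predict --- in fact it tends to zero, because $|\nabla_{\Hei^n}d|$ is bounded on $T$ and the area of $\partial N_\epsilon$ is $O(\epsilon)$, so the ``singular part'' of $\Delta_{p,\Hei^n}d$ on $S$ is not merely nonpositive but zero. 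Either route completes (i); the paper's is marginally simpler since it avoids all orientation bookkeeping. Finally, your cautions about avoiding $r=0$ and the ridge when choosing the boundary point $x_0$ are vacuous here: every point of $\overline{T}$ has $r\geq R-\rho>0$ (the torus never meets the $t$-axis), so \eqref{a15} gives the uniform lower bound $|\nabla_{\Hei^n}d|^2\geq\min\{1,4(R-\rho)^2\}>0$ on all of $T$, and $S$ lies in the interior, not on $\partial T$.
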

\proof Let $p>1$. By Proposition \ref{prop:torus}
we have $\Delta_{p,\Hei^{n}}d\leq 0$ in
$T\setminus S$. Hence the inequality
in (ii) for any $u \in C_{c}^{\infty}(T\setminus S)$
follows from  Theorem \ref{THM:Hardy,gen}.

In order to extend this to any
$u \in C_{c}^{\infty}(T)$ it is enough to establish that $\Delta_{p,\Hei^{n}}d\leq 0$ in the
distributional sense in $T$.
That is, we must prove that given a non-negative
function $\phi\in\cic(T)$ there holds
\begin{equation}
\int_T |\nabla_{\Hei^n} d|^{p-2}
\nabla_{\Hei^n} d\cdot \nabla_{\Hei^n} \phi \,
{\rm d}\xi \geq 0.
\label{111}
\end{equation}
For this we shall use a standard approximation argument.
Let
\[
q(\xi) = \sqrt{(r-R)^2 +t^2}\, , \qquad
\xi =(r,\omega,t)\in T,
\]
be the (Euclidean) distance of $\xi\in T$ to the
`circle' $S$. 
For $\epsilon>0$ small
we consider a smooth function $\psi_{\epsilon}$ on $T$
such that
\[
\psi_{\epsilon}(\xi) =\darr{0,}{\mbox{ if }q(\xi)<\epsilon,}
{1,}{\mbox{ if }q(\xi)>2\epsilon}
\]
and $|\nabla \psi_{\epsilon}|\leq c/\epsilon$.
Then $\phi_{\epsilon}:=\psi_{\epsilon}\phi$
is a non-negative smooth function
in $\cic(T\setminus S)$ and hence, by
Proposition \ref{prop:torus},
\[
\int_T |\nabla_{\Hei^n} d|^{p-2}
\nabla_{\Hei^n} d\cdot \nabla_{\Hei^n}
\phi_{\epsilon} \, {\rm d}\xi \geq 0.
\]
Since $|\nabla_{\Hei^n} d|$ is bounded,
in order to complete the proof it is enough to show that
\[
 \int_T  | \nabla_{\Hei^n}\phi_{\epsilon} -\nabla_{\Hei^n} \phi| \, {\rm d}\xi \longrightarrow 0 \; , \quad \mbox{ as }\epsilon\to 0,
\]
since \eqref{111} will then follow by
letting $\epsilon\to 0$.

In fact, since $|\nabla_{\Hei^n}u |\leq c |\nabla u |$ in $T$, it is enough to consider the
Euclidean gradient. We have
\[
\| \nabla \phi_{\epsilon}-\nabla\phi\|_{L^1(T)}
\leq \| (1-\psi_{\epsilon})\phi \|_{L^1(T)} +  
\|\phi \nabla \psi_{\epsilon}\|_{L^1(T)}.
\]
The first norm in the RHS tends to zero as
$\epsilon \to 0$ by the Dominated Convergence Theorem.
For the second one we have
\begin{align*}
\int_T | \phi\nabla \psi_{\epsilon}| {\rm d}\xi 
&\leq \frac{c}{\epsilon} \int_{ \{ \epsilon
<q(\xi)<2\epsilon\} }  {\rm d}\xi \\
& \leq \frac{c_1}{\epsilon} \epsilon^2 \\
& \to 0.
\end{align*}
Hence the Hardy inequality (ii) has been proved.

To establish the optimality of the constant we
apply part (b) of Theorem  \ref{THM:Hardy,gen}.
Assumption (i) is satisfied by
\eqref{a15}.
The fact that (ii) is satisfied is well known, see \cite[Lemma 5.2]{BFT04}.
This completes
the proof of the theorem.
\endproof

\begin{remark}
It is evident from the argument in the above proof that
if $\Omega\subset\Hei^n$ is a domain with $C^2$ boundary
then the corresponding Hardy constant cannot be larger
than $((p-1)/p)^p$,
provided there exists a point $\xi_0\in
\partial\Omega $ such that $\nabla_{\Hei^n}d(\xi_0)\neq 0$.
This is very generic. We refer to \cite{M06} and references
therein for results concerning the negligibility of the set of
such points in a much wider context.

In the case of the Heisenberg group $\Hei^n$ we can give
a direct proof of this negligibility.
Indeed, if $\xi_0\in\partial\Omega$ is a boundary
point with $\nabla_{\Hei^n}d(\xi_0)=0$, then
we have
\[
d_{x_i}^2 =4y_i^2d_t^2 \; , \qquad
d_{y_i}^2 =4x_i^2d_t^2 \; , \quad i=1,\ldots, n,
\quad \mbox{ at the point }\xi_0.
\]
Since $|\nabla d|=1$, adding implies
\[
(1+4r^2)d_t^2=1 ,
\quad \mbox{ at the point }\xi_0.
\]
It follows that if in addition the
tangent hyperplane at $\xi_0$ is parallel
to the hyperplane $\{t=0\}$
(and so $d_t(\xi_0)=1$), then we must
necessarily have $r=0$, that is the point
$\xi_0$ must lie on the $t$-axis.

\end{remark}

\subsection{Hardy inequalities with respect to the gauge pseudodistance}
\label{sec:gauge}

In this section we consider geometric Hardy 
inequalities in the Heiseberg group with respect to 
the gauge  quasi-norm 
\[
N(\xi) =\Big( (|x|^2+|y|^2)^2 +t^2 \Big)^{\frac14} , \qquad \xi =(x,y,t) \in \H^n\,.
\]
For a given domain $\Omega\subset\Hei^n$ the induced 
distance to the boundary is given by
\be
d_{N}(\xi) = {\rm dist}_{N}(\xi,  \partial\Omega)
=\inf\{ N((\xi')^{-1}\xi) , \; \xi' \in \partial\Omega\},
\qquad \xi\in\Omega.
\la{ntor}
\ee

We first consider the case where our domain
is the half-space
\[
\Pi_0 =\{ (x,y,t) \in \Hei^n : t>0\}.
\]
It is then easy to see that for $\xi=(x,y,t)\in \Pi_0$
and $\xi'=(x',y',0)\in\partial \Pi_0$ we have
\be
d_{N}(\xi, \xi')= \bigg(
\Big( |x'-x|^2+|y'-y|^2 \Big)^2 +\Big(t+ 2(x\cdot y'-y \cdot x')\Big)^2
\bigg)^{\frac14}.
\la{sp1}
\ee

\begin{lemma}\label{lem:d_Nformula}
Let $\Pi_0 =\{ (x,y,t) \in \H^n : t>0\}$ and let 
\be
d_N(\xi) = \inf\{ N((\xi')^{-1}\xi) , \;  \xi' \in 
\partial\Pi_0\} \; , \qquad \xi\in\Pi_0,
\la{inf}
\ee
denote the corresponding pseudodistance to the boundary.
Then, for any $\xi=(x,y,t)\in \Pi_0$,
$d_N(\xi)$ depends only on $r=\sqrt{|x|^2+|y|^2}$ and $t>0$. More precisely, we have
\be 
\la{soug}
d_N(\xi)=d_N(r,t) = 
\darr{\big( 2r^4s^2 -3tr^2s+t^2 \big)^{\frac14} ,}{ r>0, \; t>0,}
{t^{\frac12},}{ r=0 , \; t>0,}
\ee
where for fixed $r,t>0$ the real number $s \in \R$ is the unique 
solution of the equation
\be
s^3 +
2s    - \frac{t}{r^2}=0 \; . 
\la{sp2}
\ee
\la{lem:ex}
\end{lemma}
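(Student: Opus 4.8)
The plan is to start from the explicit formula \eqref{sp1} for the pseudodistance to a boundary point and to minimize its fourth power over $\partial\Pi_0$. Introducing $a=x-x'$ and $b=y-y'$, which range over all of $\R^n\times\R^n$ as $\xi'=(x',y',0)$ ranges over $\partial\Pi_0$, one computes $x\cdot y'-y\cdot x'=y\cdot a-x\cdot b$, so that \eqref{sp1} takes the form
\[
d_N(\xi,\xi')^4=f(a,b):=\big(|a|^2+|b|^2\big)^2+\big(t+2(y\cdot a-x\cdot b)\big)^2 .
\]
Hence $d_N(\xi)^4=\inf_{(a,b)\in\R^n\times\R^n}f(a,b)$ and the problem reduces to minimizing the smooth function $f$.

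The key observation is that $f$ is convex: writing $P=|a|^2+|b|^2$ and $Q=t+2(y\cdot a-x\cdot b)$, the term $P^2=|(a,b)|^4$ is a convex function on $\R^{2n}$ and $Q$ is affine, so $f=P^2+Q^2$ is a sum of convex functions. Consequently every critical point of $f$ is automatically a global minimizer, and it suffices to solve $\nabla f=0$, which a direct computation shows is equivalent to the system $Pa=-Qy$, $Pb=Qx$. If $r=0$, then $x=y=0$ and $f=P^2+t^2$ is minimized at $a=b=0$ with value $t^2$, giving $d_N(\xi)=t^{1/2}$.

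Assume now $r>0$. I will look for a critical point of the form $a=-sy$, $b=sx$ with $s\in\R$ scalar (that is, $Q/P=s$). Substituting gives $P=s^2r^2$ and $Q=t-2sr^2$, and the remaining relation $Q=sP$ becomes $t-2sr^2=s^3r^2$, i.e. $s^3+2s-t/r^2=0$, which is precisely \eqref{sp2}. Since $s\mapsto s^3+2s$ is strictly increasing and surjective, this equation has a unique real root $s$, necessarily positive because $t/r^2>0$; this produces a critical point which, by convexity, is the global minimizer.

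It remains to evaluate the minimum. One has $f_{\min}=P^2+Q^2=s^4r^4+s^6r^4=s^4r^4(1+s^2)$, and to reach the stated closed form I reduce the powers of $s$ using $s^3=t/r^2-2s$ from \eqref{sp2}, which gives $r^4s^4=str^2-2s^2r^4$ and $r^4s^6=t^2-4str^2+4s^2r^4$; adding these yields $f_{\min}=2r^4s^2-3tr^2s+t^2$, whence $d_N(\xi)=(2r^4s^2-3tr^2s+t^2)^{1/4}$. In particular $d_N$ depends only on $r$ and $t$, as asserted. The only genuinely substantive step is the convexity of $f$, which replaces any second-order or coercivity-based identification of the minimum and immediately promotes the critical point to a global minimizer; everything else is the solution of a strictly monotone cubic and routine algebra. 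I will also note that $f$ is coercive (its leading term $|(a,b)|^4$ blows up at infinity), so the infimum in \eqref{inf} is in fact attained.
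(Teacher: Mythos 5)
Your proof is correct, and it reaches the result by a route that is genuinely different at one key point. Both arguments minimize the same quantity given by \eqref{sp1}, arrive at the same cubic \eqref{sp2}, and perform the same final algebra (reducing $r^4s^4(1+s^2)$ to $2r^4s^2-3tr^2s+t^2$ via the cubic). The difference is how the minimizing point is certified. The paper computes the full first-order conditions for $F(x',y')$, deduces from them the orthogonality relations $x_i(x_i'-x_i)+y_i(y_i'-y_i)=0$, shows that \emph{every} critical point has the form $x'-x=sy$, $y'-y=-sx$ with $s$ solving \eqref{sp2}, and then identifies the (unique) critical point with the minimizer, the attainment of the infimum being used implicitly (it follows from coercivity). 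You instead observe that, after the substitution $a=x-x'$, $b=y-y'$, the objective $f=P^2+Q^2$ is convex, since $P^2=|(a,b)|^4$ is convex and $Q$ is affine; hence any critical point is automatically a global minimizer, and it suffices to \emph{exhibit} one by the ansatz $a=-sy$, $b=sx$. This convexity observation buys you quite a bit: no classification of critical points, no separate attainment argument, and immediate global optimality of the ansatz solution. What the paper's longer route buys is uniqueness of the nearest boundary point itself (not just of the minimum value), a fact it reuses later in the proof of Proposition \ref{lem:coord}; your argument, as written, gives the value of the minimum — which is all the lemma asserts — but would need a small supplementary argument (for instance, the paper's classification of critical points, or a strictness discussion) to recover that the minimizer is unique.
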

\proof
The case $r=0$ is immediate, so we assume that $r>0$.
By \eqref{sp1} the infimum in \eqref{inf} is attained at a point
$(x',y')$ which is a critical point of the function
\[
F(x',y') =\Big( |x'-x|^2+|y'-y|^2 \Big)^2 +\Big(t+ 2(x\cdot y'-y \cdot x')\Big)^2 
, \quad (x',y')\in\R^{2n}.
\]
For $i=1,\ldots,n$ we have
\begin{align*}
F_{x_i'} &=  4 \Big( |x'-x|^2+|y'-y|^2 \Big)(x_i'-x_i)-
4\Big(t+ 2(x\cdot y'-y \cdot x')\Big)y_i \, ,\\
F_{y_i'} &=  4 \Big( |x'-x|^2+|y'-y|^2 \Big)(y_i'-y_i) +
4\Big(t+ 2(x\cdot y'-y \cdot x')\Big)x_i \, .
\end{align*}
Assume now that $(x',y')$ is a critical point of $F$.
Then necessarily $(x',y')\neq (x,y)$.
From the last two relations we then obtain
\[
x_i(x_i'-x_i) +y_i(y_i'-y_i)=0 , \qquad i=1,\ldots , n.
\]
We set
\[
s= \frac{ t+ 2(x\cdot y'-y \cdot x')}{ |x'-x|^2+|y'-y|^2 }.
\]
and note that
\be
 x_i'-x_i = sy_i \; , \qquad y_i'-y_i =-sx_i \; , \qquad  i=1,\ldots , n.
\la{uni}
\ee
We then have
\be
x\cdot y'-y \cdot x'  =\sum_{j=1}^n 
\big[ x_j( y_j'-y_j) -y_j(x_j'-x_j) \big]
= -r^2s 
\la{da6}
\ee
and 
\[
|x'-x|^2+|y'-y|^2  = s^2\sum_{j=1}^n (y_j^2 +x_j^2)= r^2s^2 .
\]
Hence for $i=1,\ldots,n$, we have
\[
F_{x_i'} =  4y_i (r^2s^3 +2r^2s-t) , \qquad
F_{y_i'} =  -4x_i (r^2s^3 +2r^2s-t)
\]
and we thus conclude that $s$ must solve \eqref{sp2}.

Since the cubic equation has a unique solution, there exists a
unique critical point $(x',y')$ of $F$  given by \eqref{uni}.

Finally, by  \eqref{da6} we have
\begin{align}
d_{N}^4(\xi) & = \Big(|x'-x|^2+|y'-y|^2 \Big)^2 +\Big( t+
2(x\cdot y'-x'\cdot y) \Big)^2 \nonumber \\
& = \Big(|x'-x|^2+|y'-y|^2 \Big)^2 +\Big( t-2r^2s \Big)^2 \nonumber \\
&= 2r^4s^2 -3tr^2s+t^2, \la{sp4}
\end{align}
where we have also used \eqref{sp2}.
This completes the proof.

\endproof

\begin{proposition}\label{PROP:half-space}
Let $\Pi \subset\Hei^n$ be an arbitrary half-space
and let $d_{N}(\xi)$, $\xi\in\Pi$,
denote the pseudodistance to the boundary
with respect to the quasi-norm $N$. Then for any $p>1$ there holds
$\Delta_{p, \H^n} d_{N}\leq 0 $ in the distributional sense in $\Pi$.
\la{thmm2}
\end{proposition}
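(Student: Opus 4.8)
The plan is to use the symmetries of $\H^n$ to reduce the statement to two normal forms and then to treat each separately. First I would record that $\Delta_{p,\H^n}$ is left-invariant and invariant under the rotations $U(n)$ acting on $(x,y)\in\R^{2n}$, while under the anisotropic dilations $\delta_\lambda$ it transforms by a positive power of $\lambda$; by the left-invariance of $d_N(\xi,\xi')=N((\xi')^{-1}\xi)$, the invariance of $N$ under $U(n)$, and the homogeneity of $N$, the boundary pseudodistance $d_N$ of a half-space transforms consistently under the same maps. Consequently the sign of $\Delta_{p,\H^n}d_N$ is preserved, and it suffices to verify the inequality on one representative per orbit of half-spaces.

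Next I would classify half-spaces under this group. Writing $\partial\Pi=\{\langle a_x,x\rangle+\langle a_y,y\rangle+a_t t=c\}$, a short computation with the group law shows that a left translation by $g=(x_0,y_0,t_0)$ replaces the normal $(a_x,a_y,a_t)$ by $(a_x+2a_t y_0,\,a_y-2a_t x_0,\,a_t)$; thus $a_t$ is invariant, and when $a_t\neq0$ one may choose $g$ to cancel the horizontal part and remove the offset, reducing $\Pi$ to $\Pi_0=\{t>0\}$ (after possibly applying the automorphism $(x,y,t)\mapsto(y,x,-t)$, which preserves $N$ and sends $\{t<0\}$ to $\{t>0\}$). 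When $a_t=0$ the half-space is vertical and, after a rotation in $U(n)$ and a translation, becomes $\{x_1>0\}$. The vertical case is then immediate: since $\{x_1=0\}$ contains every value of the vertical variable $t'$, in $N((\xi')^{-1}\xi)$ one may first choose $t'$ to annihilate the vertical contribution and then minimise the remaining horizontal term, obtaining $d_N(\xi)=x_1$; hence $|\nabla_{\H^n}d_N|\equiv1$, the field $\nabla_{\H^n}x_1$ is constant, and $\Delta_{p,\H^n}d_N={\rm div}_H(\nabla_{\H^n}x_1)=0\le0$.

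The non-vertical case is the substantive one and reduces to $\Pi_0$, where Lemma \ref{lem:ex} gives $d_N=d_N(r,t)$ with $s=s(r,t)$ determined by $s^3+2s-t/r^2=0$ and $d_N^4=2r^4s^2-3tr^2s+t^2$. Since $d_N$ depends only on $(r,t)$, I would use the reduced Laplacian $\Delta_{\H^n}w=w_{rr}+\tfrac{2n-1}{r}w_r+4r^2w_{tt}$ together with the decomposition $\Delta_{p,\H^n}w=A^{\frac{p-4}{2}}\big(A\,\Delta_{\H^n}w+\tfrac{p-2}{2}(w_rA_r+4r^2w_tA_t)\big)$, $A=w_r^2+4r^2w_t^2$, exactly as in \eqref{tor1}. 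The key step is to compute $d_r,d_t,d_{rr},d_{tt}$ by implicit differentiation of the cubic (so that $(3s^2+2)s_r=-2t/r^3$ and $(3s^2+2)s_t=1/r^2$), substitute, and show the resulting bracket is $\le0$. I expect the main obstacle to be precisely this calculation: since $s$ is only implicitly defined, each quantity must be rewritten through $s$, but the dilation invariance of $\sigma=t/r^2$ (hence of $s$) should, after factoring out the appropriate power of $r$, collapse the inequality to a single polynomial inequality in the one variable $s$ depending on $p$ and $n$, which can then be verified directly.

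This establishes the pointwise inequality on $\Pi_0\setminus\{r=0\}$; to upgrade it to $\Delta_{p,\H^n}d_N\le0$ in the distributional sense across the $t$-axis $\{r=0,\,t>0\}$ I would repeat the cut-off argument from the proof of Theorem \ref{THM:torus}, multiplying the test functions by a function $\psi_\epsilon$ supported away from $\{r=0\}$ with $|\nabla\psi_\epsilon|\le c/\epsilon$, using that $|\nabla_{\H^n}d_N|$ is bounded and that the $\epsilon$-neighbourhood of the axis (of codimension $2n\ge2$) has measure $O(\epsilon^{2n})$, and letting $\epsilon\to0$.
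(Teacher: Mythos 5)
Your strategy for the non-vertical case coincides with the paper's own proof: reduction to $\Pi_0=\{t>0\}$ by left translation, the formula of Lemma \ref{lem:ex} for $d_N$ through the cubic \eqref{sp2}, the decomposition \eqref{tor1} of $\Delta_{p,\H^n}$ for cylindrically symmetric functions, implicit differentiation of the cubic, and the cut-off argument across the $t$-axis (your codimension count, giving an error $O(\epsilon^{2n-1})$, and the boundedness of $|\nabla_{\H^n}d_N|$ are both correct). You also add something the paper's one-line reduction (``by group action it is enough to consider $\Pi_0$'') glosses over: vertical half-spaces, those with $a_t=0$, form a separate orbit, since left translations, $U(n)$ rotations and dilations all preserve the coefficient $a_t$, so they can never be mapped to $\Pi_0$. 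Your separate treatment of that case --- $d_N=x_1$, constant horizontal gradient, $\Delta_{p,\H^n}d_N=0$ --- is correct and genuinely completes the statement for \emph{arbitrary} half-spaces.

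There is, however, a genuine gap at the heart of the argument: the computation on which everything turns is never performed. You set up the bracket $A\,\Delta_{\H^n}d+\tfrac{p-2}{2}\,(d_rA_r+4r^2d_tA_t)$ and assert that, after factoring out powers of $r$, it ``should collapse to a single polynomial inequality in $s$ which can then be verified directly.'' The collapse to one variable is indeed forced by dilation invariance, but the \emph{sign} is not structural: the very same bracket in the torus setting (Proposition \ref{prop:torus}) does not have a sign unconditionally --- conditions on $R/\rho$ are required --- so nonpositivity here is a computational fact that must actually be established, uniformly in $p>1$ and $n\ge 1$. The paper's proof devotes most of its length to exactly this step: it works with $G=d_N^4=r^4s^4(s^2+1)$ instead of $d_N$ (which tames the algebra considerably), computes $G_r=-4r^3s^4$, $G_t=2r^2s^3$ and the second derivatives, and arrives at the explicit identity
\[
\Delta_{p,\H^n} d_N  =   -  G^{-\frac{3p+1}{4}}
\,  \frac{r^{3p} s^{3p+2}(s^2+1)^{\frac{p}{2}}\big( (6n+p-4)s^2+4n+p-5 \big)}{3s^2+2}\,,
\]
whose nonpositivity is then evident since $6n+p-4>0$ and $4n+p-5>0$. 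Note also that your list of required derivatives ($d_r,d_t,d_{rr},d_{tt}$) omits the mixed derivative $d_{rt}$, which is needed because $A_r$ and $A_t$ both contain it. Until this computation is carried out, what you have is a correct and well-organized outline, not a proof.
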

\proof By group action
(see also \cite[p340]{Lar16}) 
it is enough to consider the case $\Pi=\Pi_0$.
Also, it is preferable to work with the function
\[
G(r,t)=d_N(r,t)^4 
\]
instead of $d_N(r,t)$. 
To compute the various derivatives of $G(r,t)$
we recall from Lemma \ref{lem:ex} that
\be
G(r,t) =2r^4s^2 -3tr^2s+t^2 =(t-r^2s)(t-2r^2s) 
\quad \mbox{ in }\Pi_0 \setminus \{ r=0\},
\la{aek}
\ee
where $s=s(r,t)$ is defined by  \eqref{sp2}.
Since
$ t=r^2(s^3+2s)$ we may
eliminate $t$ from \eqref{aek} and we obtain
\be
\la{aek10}
G(r,t)=r^4s^4(s^2+1).
\ee
By \eqref{sp2} we have
\be
s_t =\frac{1}{r^2(3s^2+2)} \; , \qquad
s_r = - \frac{2s(s^2+2)}{r(3s^2+2)}.
\la{sp3}
\ee
Hence
\begin{align}
G_r & = 4r^3s^4(s^2+1) +r^4(6s^5+4s^3)s_r \nonumber\\
&= 4r^3s^4(s^2+1) -r^4(6s^5+4s^3) \frac{2s(s^2+2)}{r(3s^2+2)}
\nonumber  \\
&= -4r^3s^4. \la{aek4}
\end{align}
Similarly we obtain
\begin{align}
G_t &= 2r^2s^3  , \qquad  \qquad
G_{rr} =   - \frac{4r^2s^4(s^2-10)}{3s^2+2}  \la{aek5} \\
G_{tt} &=   \frac{6s^2}{3s^2+2} , \qquad \quad
G_{rt} =  -\frac{16rs^3}{3s^2+2}. \nonumber
\end{align}

Setting $A=|\nabla_{\Hei^{n}}d_N|^2$ we have, cf. \eqref{tor1},
\be
\Delta_{p,\Hei^{n}} d_N  =  A^{\frac{p-4}{2}}
 \Big( A \, \Delta_{\Hei^{n}}d_N
+ \frac{p-2}{2} \big(d_{N,r} A_r +4r^2 d_{N,t} A_t\big) \Big).
\la{aek1}
\ee
We have
\[
d_{N,r} = \frac14 G^{-\frac34} G_r   , \qquad  d_{N,t} = 
\frac14 G^{-\frac34} G_t , \qquad
d_{N,rr} = -\frac{3}{16}G^{-\frac74}G_r^2  +\frac14 G^{-\frac34}G_{rr} 
\]
and
\[
d_{N,tt} = -\frac{3}{16}G^{-\frac74}G_t^2  +\frac14 G^{-\frac34}
G_{tt} , \qquad 
d_{N,rt} = -\frac{3}{16}G^{-\frac74}G_r G_t  +\frac14 G^{-\frac34}
G_{rt} .
\]
Moreover
\[
A =d_{N,r}^2+4r^2d_{N,t}^2 
 = \frac{1}{16}G^{-\frac32}(G_r^2+4r^2G_t^2)
=G^{-\frac32} r^6s^6(s^2+1),
\]
\begin{align*}
A_r & = 2d_{N,r}d_{N,rr} +8rd_t^2 +8r^2d_{N,t}d_{N,rt} \\
& =G^{-\frac52} \Big(
 -\frac{3}{32}G_r^3 + \frac18 G \, G_r G_{rr} +
 \frac{r}{2} G\, G_t^2 -\frac{3r^2}{8}G_r G_t^2 + \frac{r^2}{2}G \, G_t G_{rt} \Big) \\
& =G^{-\frac52} \, \frac{2r^9s^{12}(s^2+2)(s^2+1)}{3s^2+2} 
\end{align*}
and
\begin{align*}
A_t & = 2d_{N,r}d_{N,rt}  +8r^2d_{N,t}d_{N,tt} \\
& =G^{-\frac52} \Big(
 -\frac{3}{32} G_r^2 G_t + \frac18 G\, G_r G_{rt}
  -\frac{3r^2}{8}G_t^3 +\frac{r^2}{2}G \, G_t G_{tt} \Big)\\
&= - G^{-\frac52} \, \frac{r^8s^{11}(s^2+1)}{3s^2+2}.
\end{align*}
Combining the above we arrive at
\be
d_{N,r} A_r +4r^2 d_{N,t} A_t =- 2\, G^{-\frac{13}{4}} \,
\frac{r^{12} s^{14}(s^2+1)^3}{3s^2+2}.
\la{aek2}
\ee
On the other hand
in $\Pi_0 \setminus  \{ r=0\}$
we have, cf. \eqref{eq:1},
\begin{align}
\Delta_{\H^n} d_N & = d_{N,rr} +\frac{2n-1}{r} d_{N,r} + 4r^2d_{N,tt} 
\nonumber \\
&= \frac{1}{4}G^{-\frac{7}{4}}
\Big\{
GG_{rr} -\frac34 G_r^2  +\frac{2n-1}{r} GG_r +4r^2 GG_{tt} -3r^2 G_t^2
\Big\} \nonumber \\
&= - G^{-\frac74} \,  \frac{r^6 s^8(s^2+1)\big( (6n-2)s^2+4n-3 \big)}{3s^2+2}. 
\la{aek3}
\end{align}
From \eqref{aek1}, \eqref{aek2} and \eqref{aek3} we obtain
\[
\Delta_{p,\Hei^{n}} d_N  =   -  G^{-\frac{3p+1}{4}}
\,  \frac{r^{3p} s^{3p+2}(s^2+1)^{\frac{p}{2}}\big( (6n+p-4)s^2+4n+p-5 \big)}{3s^2+2} \leq 0,
\]
and the desired inequality has been proved pointwise in $\Pi_0
\setminus\{r=0\}$ (where $d_N$ is smooth).
To complete the proof we argue as in the proof of Theorem
\ref{THM:torus}, using in particular functions $\psi_{\epsilon}$, $\epsilon>0$,
as in that proof. Part (i) is also used at this point since the local boundedness of $|\nabla_H d_N|$ is required when letting $\epsilon\to 0$.
 \endproof

\begin{proposition}
Let $\Pi_0=\{ (x,y,t) \in \H^n : t>0\}$ and let
$d_N=d_N(r,t)$
denote the corresponding gauge pseudodistance
to the boundary $\partial \Pi_0$ of the point
$\xi=(r,\omega,t)\in\Pi_0$ expressed in cylindrical coordinates.
Then for any fixed $r\neq 0$ we have
\begin{align*}
{\rm (i)} & \quad d_{N}(r,t) =\frac{t}{2r} +O(t^3) \\
{\rm (ii)} & \quad |\nabla_{\H^n} d_{N}(r,t)| =1 +O(t^2)
\end{align*}
as $t\to 0+$.
\la{propp3}
\end{proposition}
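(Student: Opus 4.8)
The plan is to reduce both asymptotics to the behaviour of the auxiliary quantity $s=s(r,t)$ defined implicitly by the cubic \eqref{sp2}, in the regime $t\to 0+$ with $r\neq0$ fixed. The guiding observation is that every relevant quantity has a clean closed form in terms of $s$. Indeed, from \eqref{aek10} we have $G:=d_N^4=r^4s^4(s^2+1)$, so that (since $s>0$ when $t>0$) $d_N=r\,s\,(1+s^2)^{1/4}$. Combining this with the expression for $A=|\nabla_{\H^n}d_N|^2=G^{-3/2}r^6s^6(s^2+1)$ obtained in the proof of Proposition~\ref{PROP:half-space}, the powers of $r$ and $s$ cancel and we arrive at the remarkably simple identity
\[
|\nabla_{\H^n}d_N|^2=(1+s^2)^{-1/2}.
\]
Thus, once the expansion of $s$ in $t$ is known, both \ia\ and \ib\ follow by substitution.

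First I would analyse the cubic \eqref{sp2}. Writing $\tau=t/r^2$, the equation reads $s^3+2s=\tau$; since the left-hand side vanishes at $s=0$ with nonvanishing derivative $2$ there, the implicit function theorem yields a smooth (indeed real-analytic) branch $s=s(\tau)$ with $s(0)=0$. Moreover the equation is invariant under $(s,\tau)\mapsto(-s,-\tau)$, so $s(\tau)$ is an \emph{odd} function of $\tau$, hence an odd function of $t$. A short power-series inversion then gives
\[
s=\frac{\tau}{2}-\frac{\tau^3}{16}+O(\tau^5)=\frac{t}{2r^2}+O(t^3),
\]
and in particular $s^2=O(t^2)$.

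With this in hand, part \ia\ is immediate: since $(1+s^2)^{1/4}=1+O(s^2)=1+O(t^2)$ and $s=t/(2r^2)+O(t^3)$, we get $d_N=r\,s\,(1+s^2)^{1/4}=\tfrac{t}{2r}+O(t^3)$. The order of the remainder is confirmed by parity: $d_N$ is the product of the odd function $r\,s$ and the even function $(1+s^2)^{1/4}$, hence odd in $t$, so its expansion contains only odd powers and the first correction is genuinely $O(t^3)$. Part \ib\ follows equally directly from the displayed identity for the gradient: $|\nabla_{\H^n}d_N|=(1+s^2)^{-1/4}=1-\tfrac14 s^2+O(s^4)=1+O(t^2)$.

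I do not anticipate any genuine obstacle here; the whole statement is a soft consequence of the analyticity at the origin of the branch $s(\tau)$ of the cubic together with the two algebraic identities for $d_N$ and $|\nabla_{\H^n}d_N|$ in terms of $s$. The only point requiring care is the cancellation producing $|\nabla_{\H^n}d_N|^2=(1+s^2)^{-1/2}$, which rests on correctly importing the formulas for $G$ and $A$ from the proof of Proposition~\ref{PROP:half-space}; everything else is routine bookkeeping of orders in $t$.
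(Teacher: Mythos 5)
Your proof is correct, and it takes a genuinely different (and cleaner) route than the paper's. The paper's proof differentiates the quartic relation \eqref{sp4} implicitly to obtain $d_t$ and $d_r$ as rational functions of $(s,r,t,d)$, then expands in turn $s$, $d^4$, $1/d^6$, $d_t^2$ and $d_r^2$ in powers of $t$, and finally assembles $|\nabla_{\H^n}d_N|^2=d_r^2+4r^2d_t^2$. You instead combine the two exact identities already established in the proof of Proposition \ref{PROP:half-space}, namely $G=d_N^4=r^4s^4(1+s^2)$ (equation \eqref{aek10}) and $A=|\nabla_{\H^n}d_N|^2=G^{-3/2}r^6s^6(1+s^2)$; since $r>0$ and $s>0$ for $t>0$, the cancellation you claim is indeed exact and yields
\[
|\nabla_{\H^n}d_N|^2=(1+s^2)^{-1/2},
\qquad
d_N=r\,s\,(1+s^2)^{1/4},
\]
after which the single expansion $s=\tfrac{t}{2r^2}+O(t^3)$ of the cubic \eqref{sp2} (correctly justified by the implicit function theorem, with the odd-parity remark pinning the remainder orders) finishes both parts. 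What your approach buys: part (ii) becomes an exact algebraic identity rather than the endpoint of an asymptotic computation --- in particular it shows $|\nabla_{\H^n}d_N|\leq 1$ throughout $\Pi_0\setminus\{r=0\}$, with the value $1$ attained only in the boundary limit $s\to 0$ --- and all the implicit differentiation of \eqref{sp4} carried out in the paper's proof is avoided. The trade-off is that your argument is not self-contained: it imports the formulas for $G$ and $A$ from the proof of Proposition \ref{PROP:half-space}, a dependence you flag explicitly and which is legitimate since that proposition precedes this one; the paper's proof, by contrast, re-derives everything it needs from \eqref{sp4} and \eqref{sp2} directly.
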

\proof
Differentiating \eqref{sp4} we get
\[
4d_N^3d_{N,t} =2t -3r^2s +(4r^4s-3tr^2)s_t. 
\]
Now using the first part of \eqref{sp3} and the fact that $s$ solves \eqref{sp2} we obtain
\be
\la{soug1}
d_{N,t} =\frac{6ts^2 -2r^2s +t-9r^2s^3}{4d^3(3s^2+2)} = 
\frac{6ts^2 +16r^2s -8t}{4d^3(3s^2+2)}.
\ee
Similarly we find that
\be
\la{soug2}
d_{N,r} =\frac{24r^4s^4 +16r^4s^2 -18tr^2s^3 -20tr^2s
 +6t^2}{4d^3r(3s^2+2)} = \frac{ 40r^2ts -32r^4s^2-12t^2  }{4d^3r(3s^2+2)}.
\ee
We now let $t\to 0+$. From \eqref{sp2} we find
\[
s= \frac{t}{2r^2} -\frac{t^3}{16r^6} +O(t^5).
\]
Plugging this in \eqref{sp4} we have
\[
d_N^4(r,t) =\frac{t^4}{16r^4} -\frac{t^6}{64r^8} +O(t^8)
\]
and (i) follows. We then also have
\[
\frac{1}{d_N^6(r,t)}  =\frac{64r^6}{t^6} +O(\frac{1}{t^4})
\]
and combining the above we obtain
\[
d_{N,t}^2 =\frac{1}{4r^2} +O(t^2) \; , \qquad d_{N,r}^2 =\frac{t^2}{4r^4}
+O(t^4).
\]
We thus conclude that
\[
|\nabla_{\H^n} d_N|^2 =d_{N,r}^2+4r^2 d_{N,t}^2 =1 +O(t^2),
\]
as required.
\endproof

\begin{remark}
\la{kos}
We note for later use that we can use \eqref{sp2} to eliminate $t$ from
the expressions \eqref{soug1}, \eqref{soug1} and \eqref{soug2} for
$d_N$, $d_{N,t}$ and $d_{N,r}$ respectively. We obtain
\[
d_N = rs(s^2+1)^{1/4}, \qquad
d_{N,r}= \frac{r^2s^3}{2d^3}, \qquad
d_{N,t} = \frac{r^2s^3}{2d^3}.
\]
\end{remark}

\begin{theorem}\label{thm:half-space.d_N}
Let $p>1$ and $\Pi$ be an arbitrary half-space in $\Hei^n$. Let
$d_{N}(\xi)={\rm dist}_{N}(\xi,\partial \Pi)$
denote the corresponding
pseudodistance of $\xi \in \Pi$ to the boundary $\partial \Pi$.
Then there holds
\[
\int_{\Pi} |\nabla_{\H^n} u|^p {\rm d}\xi \geq 
\Big( \frac{p-1}{p}  \Big)^p
\int_{\Pi} \frac{|\nabla_{\H^n} d_{N}|^p}{d_{N}^p}|u|^p {\rm d}\xi \; , \quad
u\in\cic(\Pi).
\]
Moreover the constant is the best possible.
\end{theorem}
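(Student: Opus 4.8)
The plan is to deduce both assertions of the theorem from the two parts of Theorem \ref{THM:Hardy,gen}, since the essential analytic work has already been carried out in the preceding propositions. For the inequality itself I would simply record that the pseudodistance $d_N$ is CC-Lipschitz (by the earlier proposition asserting that $d_{N,S}$ is CC-Lipschitz for any quasi-norm smooth off the origin) and that, by Proposition \ref{PROP:half-space}, one has $\Delta_{p,\H^n} d_N \leq 0$ in the distributional sense on any half-space $\Pi$. Applying part (a) of Theorem \ref{THM:Hardy,gen} with $d = d_N$ then yields the stated Hardy inequality with the constant $\big(\tfrac{p-1}{p}\big)^p$ immediately, with no further computation.

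For the optimality I would invoke part (b) of Theorem \ref{THM:Hardy,gen}. Since left translations are isometries for the horizontal gradient and the Hardy quotient is invariant under the group action that reduces an arbitrary half-space to $\Pi_0=\{t>0\}$ (as used in the proof of Proposition \ref{PROP:half-space}), it suffices to verify the two hypotheses of part (b) for $\Pi_0$. I would fix a boundary point $x_0=(x_0',y_0',0)\in\partial\Pi_0$ whose radial coordinate $r_0=\sqrt{|x_0'|^2+|y_0'|^2}$ is strictly positive, and take nested neighbourhoods $A'\subset\subset A$ of $x_0$ that remain a definite distance away from the axis $\{r=0\}$ and are thin in the $t$-direction. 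Condition (i) is then read off from Proposition \ref{propp3}(ii): because $|\nabla_{\H^n} d_N(r,t)|=1+O(t^2)$ as $t\to 0+$, uniformly for $r$ ranging over a compact interval bounded away from zero, the horizontal gradient is bounded below by a positive constant $c$ throughout $A\cap\Pi_0$ once $A$ is taken sufficiently thin.

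For condition (ii) I would use the first-order expansion $d_N(r,t)=\tfrac{t}{2r}+O(t^3)$ of Proposition \ref{propp3}(i). On $A'$, where $r$ stays in a compact interval away from the axis, this expansion shows that $d_N$ is comparable to $t$, say $c_1 t\leq d_N\leq c_2 t$ for small $t$, with the comparison constants for $d_N^{-1+\epsilon}$ staying bounded as $\epsilon\to 0$. Consequently $\int_{A'\cap\Pi_0} d_N^{-1+\epsilon}\,{\rm d}\xi$ reduces, up to bounded factors coming from the transverse directions, to a multiple of $\int_0^\delta t^{-1+\epsilon}\,{\rm d}t=\delta^\epsilon/\epsilon$, which is finite for each $\epsilon>0$ and diverges to $+\infty$ as $\epsilon\to 0$. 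With (i) and (ii) in hand, part (b) of Theorem \ref{THM:Hardy,gen} shows the Hardy quotient can be pushed arbitrarily close to $\big(\tfrac{p-1}{p}\big)^p$, establishing sharpness.

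The substantive difficulties have thus already been absorbed into the earlier results: the $p$-superharmonicity of $d_N$ from Proposition \ref{PROP:half-space} drives the inequality, while the boundary expansion from Proposition \ref{propp3} simultaneously delivers the lower bound on $|\nabla_{\H^n} d_N|$ and the integrability threshold. The only point I would treat with some care is the \emph{uniformity} in $r$ of the expansions of Proposition \ref{propp3} over a compact neighbourhood bounded away from $\{r=0\}$, since this is what legitimizes replacing $d_N$ by a constant multiple of $t$ inside the divergent integral; this uniformity follows from the smoothness of $d_N$ off the axis.
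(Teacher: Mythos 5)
Your proposal is correct and follows essentially the same route as the paper: reduce to $\Pi_0$ by group action, obtain the inequality from Theorem \ref{THM:Hardy,gen}(a) together with Proposition \ref{PROP:half-space}, and obtain sharpness from Theorem \ref{THM:Hardy,gen}(b) together with the asymptotics of Proposition \ref{propp3}. The paper's proof is just a terser version of yours, citing these same results without spelling out the verification of conditions (i) and (ii), which you have done correctly.
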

\proof
Action by an appropriate group element reduces the proof
to the case
$\Pi =\Pi_0=\{ (x,y,t) : t>0\}$. The validity of
the Hardy inequality is a consequence of
Theorem
\ref{thmm1} (a) and Proposition \ref{thmm2}.
The sharpness of the constant follows from the second part of
Theorem  \ref{thmm1} (b) and Proposition \ref{propp3}.
\endproof

In case $p=2$ we will extend the above to the
case of a bounded convex polytope.
For this we will need the following lemma where,
as above, $\Pi_0=\{(x,y,t)\in\Hei^n : \; t>0\}$.
\begin{proposition}
    Any point $\xi\in \Pi_0$ has a unique nearest boundary
point $(x',y',0)\in \partial\Pi$. Moreover, given a point
$\xi'=(x',y',0)\in \partial\Pi_0$ and $\rho>0$, there exists a unique point
$\xi=(x,y,t)\in\Pi_0$ whose nearest boundary point is $\xi'$ and for which $d_N(\xi)=\rho$.
\la{lem:coord}
\end{proposition}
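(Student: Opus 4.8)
The plan is to treat the two assertions separately: I would derive the first directly from Lemma \ref{lem:ex} and establish the second by inverting the nearest-point relation \eqref{uni} and reducing the distance condition to a scalar monotonicity argument.

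For the uniqueness of the nearest boundary point, I would observe that this is essentially contained in Lemma \ref{lem:ex}. For $\xi=(x,y,t)\in\Pi_0$ with $r>0$, the candidate feet are the critical points of the smooth coercive function $F$ of that lemma; a minimiser exists because $F\to\infty$ as $|(x',y')|\to\infty$, and it is unique because $F$ has exactly one critical point, the root $s$ of the cubic \eqref{sp2} being unique. The degenerate case $r=0$ follows at once from the explicit value $d_N=t^{1/2}$. Hence every $\xi\in\Pi_0$ has a unique nearest boundary point.

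For the converse, I would fix $\xi'=(x',y',0)$ with $r'=\sqrt{|x'|^2+|y'|^2}>0$ and parametrise by $s>0$ the ``normal ray'' of points whose foot is $\xi'$. Since the linear relation \eqref{uni} has determinant $1+s^2\neq 0$, I would invert it to obtain
\[
x_i=\frac{x_i'-sy_i'}{1+s^2},\qquad y_i=\frac{sx_i'+y_i'}{1+s^2},\qquad i=1,\dots,n,
\]
then set $r^2=r'^2/(1+s^2)$ and $t=r^2(s^3+2s)>0$, and call the resulting point $\xi(s)\in\Pi_0$. The step requiring real care is the self-consistency check that $\xi'$ is indeed the nearest boundary point of $\xi(s)$: by construction $t/r^2=s^3+2s$, so the cubic \eqref{sp2} attached to $\xi(s)$ has $s$ as its unique root, and feeding this $s$ back into \eqref{uni} returns exactly $(x',y')$ after a short computation with the inverse matrix above. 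This makes $s\mapsto\xi(s)$ a bijection onto the set of points of $\Pi_0$ whose foot is $\xi'$.

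Finally, to match the prescribed distance I would use \eqref{aek10} together with $r^4=r'^4/(1+s^2)^2$ to write
\[
d_N(\xi(s))^4=r^4s^4(s^2+1)=r'^4\,\frac{s^4}{1+s^2},
\]
so that the condition $d_N(\xi(s))=\rho$ becomes $\phi(s):=s^4/(1+s^2)=\rho^4/r'^4$. Since $\phi(0)=0$, $\phi(s)\to\infty$, and $\phi'(s)=2s^3(s^2+2)/(1+s^2)^2>0$ for $s>0$, the function $\phi$ is a strictly increasing bijection of $(0,\infty)$ onto itself, so this scalar equation has a unique positive solution, yielding a unique $\xi=\xi(s)$. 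The degenerate case $r'=0$, where $\xi'$ is the origin, I would dispose of separately: the relation $r'^2=(1+s^2)r^2$ forces $r=0$, and $d_N=t^{1/2}=\rho$ gives the single point $(0,0,\rho^2)$. I expect the self-consistency verification to be the only genuine obstacle; once it is in place, both existence and uniqueness of $\xi$ reduce entirely to the monotonicity of $\phi$.
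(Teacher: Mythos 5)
Your proof is correct and follows essentially the same route as the paper's: both parts rest on Lemma \ref{lem:ex}, and the converse direction is obtained by inverting \eqref{uni} to get $x_i=(x_i'-sy_i')/(1+s^2)$, $y_i=(sx_i'+y_i')/(1+s^2)$ and pinning down $s$ uniquely from the scalar relation $\rho^4/r'^4=s^4/(1+s^2)$, exactly as the paper does via the equivalent relation $r'^4/\rho^4=(1+s^2)/s^4$. If anything, you supply details the paper compresses into ``it is not difficult now to see'': the explicit self-consistency check that $\xi(s)$ has $\xi'$ as its foot, the monotonicity computation for $s^4/(1+s^2)$, and the degenerate case $r'=0$.
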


\proof We have already seen in the proof
of Lemma \ref{lem:ex}
that given $\xi\in\Pi_0$ the nearest boundary
point $\xi'\in\partial\Pi$ is uniquely defined.

Suppose now that a point $\xi'=(x',y',0)
\in\partial\Pi_0$ and $\rho>0$ are given.
Assume that $\xi\in \Pi_0$ has $\xi'$
as its nearest boundary point and that $d_N(\xi)=\rho$.
Denoting $r^2=|x|^2 + |y|^2$ and $r'^2=|x'|^2 + |y'|^2$
we  have from \eqref{uni} that
\[
r'^2 =(1+s^2)r^2 \,  ,
\]
where $s>0$ is defined in terms of $r,t$ by
\eqref{sp2}.
We also have (cf. \eqref{aek10}) $\rho^4=r^4s^4(1+s^2)$.
We thus conclude that
\[
\frac{r'^4}{\rho^4} = \frac{1+s^2}{s^4},
\]
and this relation uniquely determines $s>0$. Now going back to \eqref{uni}
we  obtain
\[
x_i =\frac{x_i' -sy_i'}{1+s^2} , \qquad y_i= \frac{sx_i' +y_i'}{1+s^2}.
\]
We also have $t=r^2(s^3+2s)$, hence the point $\xi =(x,y,t)
\in\Pi_0$ has been uniquely determined.
It is not difficult now to see that 
this point
has indeed $(x',y',0)$ as its nearest boundary point
and $d_N(\xi)=\rho$. This completes the proof.
\endproof
\begin{remark}
    Let us point out that the convexity
of a set $\Omega \subset G$ in a stratified group $G$ is a genuine geometric notion in the sense that
it is invariant under left translations; i.e., if $\Omega\subset G$ is convex and $g\in G$, then $g\Omega$ is also convex. 
\end{remark}

\begin{theorem}\label{THM:Hardy,N,Omega}
Let $\Omega\subset\Hei^n$ be a bounded convex polytope 
and let $d_{N}(\xi)$, $\xi \in \Omega$,
denote the corresponding gauge pseudodistance to the 
boundary. Then 
\begin{align*}
{\rm (i)} & \quad  \Delta_{\H^n} d_{N} \leq 0 \; \mbox{ in the distributional
sense in }\Omega .\\
{\rm (ii)} & \quad \mbox{The Hardy inequality}
\\[0.2cm]
& \hspace{1.5cm}
 \int_{\Omega} |\nabla_{\H^n} u|^2 {\rm d}\xi \geq 
 \frac14
\int_{\Omega} \frac{|\nabla_{\H^n} d_N|^2}{d_N^2}
u^2
 {\rm d}\xi  \; ,  \quad u\in \cic(\Omega), \\[0.2cm]
& \quad \mbox{is valid, and the constant $1/4$ is sharp.}
\end{align*}
\end{theorem}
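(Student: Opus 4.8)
The plan is to reduce the polytope case to the half-space case already settled in Proposition \ref{PROP:half-space}, exploiting the fact that distributional superharmonicity for $\Delta_{\H^n}$ is preserved under taking minima. Write $\Omega=\bigcap_{k=1}^m\Pi_k$ as an intersection of finitely many open half-spaces and let $d_{N,k}$ denote the gauge pseudodistance to $\partial\Pi_k$. The central step is the identity
\[
d_N(\xi)=\min_{1\le k\le m}d_{N,k}(\xi),\qquad \xi\in\Omega.
\]
The inequality $d_N\ge\min_k d_{N,k}$ is immediate from $\partial\Omega\subset\bigcup_k\partial\Pi_k$, since the infimum defining $d_N$ is then taken over a smaller set. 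For the reverse inequality I would first rewrite $d_N$ as the distance to the complement, $d_N(\xi)=\inf_{g\notin\Omega}N(\xi^{-1}g)$ (using the symmetry of $N$): the point is that the gauge ball is star-shaped for the dilations, so for any $g\notin\Omega$ the curve $\lambda\mapsto\xi\,\delta_\lambda(\xi^{-1}g)$, $\lambda\in[0,1]$, joins $\xi$ to $g$ with $N\big(\xi^{-1}\xi\,\delta_\lambda(\xi^{-1}g)\big)=\lambda N(\xi^{-1}g)$ increasing, hence crosses $\partial\Omega$ at some $\lambda_*\le1$ and yields a boundary point no farther from $\xi$ than $g$. Once $d_N(\xi)=\inf_{g\notin\Omega}N(\xi^{-1}g)$ is established, the inclusion $\Pi_k^{c}\subset\Omega^{c}$ gives $d_N\le d_{N,k}$ for each $k$, and the identity follows.

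With the identity in hand part (i) is almost immediate. By Proposition \ref{PROP:half-space} with $p=2$, each $d_{N,k}$ satisfies $\Delta_{\H^n}d_{N,k}\le0$ distributionally on $\Pi_k\supset\Omega$, and each is continuous (indeed locally CC-Lipschitz). Since the minimum of finitely many $\Delta_{\H^n}$-superharmonic functions is again superharmonic — equivalently, continuous functions satisfying $\Delta_{\H^n}v\le0$ in the distributional sense obey the sub-Laplacian super-mean-value inequality, a property obviously inherited by minima, cf.\ the potential theory of \cite{BLU07} — we conclude $\Delta_{\H^n}d_N\le0$ distributionally in $\Omega$, which is (i). Part (ii) then follows directly from Theorem \ref{THM:Hardy,gen}(a) applied with $p=2$, recalling that $d_N$ is locally CC-Lipschitz; this yields the Hardy inequality with constant $\tfrac14$.

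For the sharpness I would invoke Theorem \ref{THM:Hardy,gen}(b). Choose $x_0$ in the relative interior of a facet, say the one carried by $\Pi_j$, and neighbourhoods $A'\subset\subset A$ of $x_0$ so small that they meet no other face; there $d_N$ coincides with the single half-space distance $d_{N,j}$. After a normalizing group action carrying $\Pi_j$ to $\Pi_0$ — under which the left-invariant quantity $|\nabla_{\H^n}d_N|$ is unchanged — we may arrange that $x_0$ corresponds to a point with $r\neq0$, the exceptional locus $r=0$ being the single point of $\partial\Pi_0$. Proposition \ref{propp3} then gives $|\nabla_{\H^n}d_N|=1+O(t^2)\to1$, so hypothesis (i) of Theorem \ref{THM:Hardy,gen}(b) holds with, say, $c=\tfrac12$, while hypothesis (ii), the borderline integrability of $d_N^{-1+\epsilon}$, follows from the asymptotics $d_N\sim ct$ of the same proposition exactly as in \cite[Lemma 5.2]{BFT04}. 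Hence the infimum in Theorem \ref{THM:Hardy,gen}(b) is at most $\tfrac14$, matching (ii), and the constant is sharp.

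The main obstacle I expect is the min-identity, and specifically its reverse inequality: this is the only genuinely geometric point, and it is precisely where the dilation (star-shapedness) structure of the gauge quasi-norm must be used, since $N$ has no true triangle inequality and there is no straight-segment comparison available as in the Euclidean convex case.
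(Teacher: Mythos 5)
Your proof is correct, and for part (i) it takes a genuinely different route from the paper's. The paper likewise reduces to Proposition \ref{PROP:half-space} and uses the identity $d_N=\min_k d_{N,k}$ (asserted there through the cell identity \eqref{geom}, essentially without proof), but it establishes distributional superharmonicity by hand: it partitions $\Omega$ into the cells $A_k$ on which $d_N$ equals the distance to the $k$-th facet, integrates by parts on each cell (with the same cut-off around the singular half-lines $L_k$), and checks that the interface contributions have a sign via the pointwise identity $(\nabla_{\H^n}d_k-\nabla_{\H^n}d_j)\cdot\nu_H=\lambda\big(|\nu_x+2\nu_t y|^2+|\nu_y-2\nu_t x|^2\big)\ge0$ on $S_{kj}$, which exploits the fact that $\nabla d_k-\nabla d_j$ is parallel to the Euclidean normal of the interface. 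You replace this computation by the soft fact that a minimum of finitely many continuous $\Delta_{\H^n}$-superharmonic functions is superharmonic (via the mean-value characterization, as in \cite{BLU07}); this is legitimate for sub-Laplacians on stratified groups and makes the argument shorter and insensitive to the interface geometry, at the price of invoking the nontrivial equivalence between distributional superharmonicity and the super-mean-value property. Conversely, your dilation (star-shapedness) proof of the reverse inequality in $d_N=\min_k d_{N,k}$ supplies a justification that the paper omits. The Hardy inequality and sharpness steps coincide with the paper's (Theorem \ref{THM:Hardy,gen} plus Proposition \ref{propp3}); the only point to tighten is that Proposition \ref{propp3} is stated for fixed $r\neq0$, so verifying hypothesis (i) of Theorem \ref{THM:Hardy,gen}(b) on a full neighbourhood $A\cap\Omega$ requires the (easily checked) local uniformity of those asymptotics in $r$, and your choice of $x_0$ in the relative interior of a facet implicitly uses that the other facet hyperplanes stay at positive distance from $x_0$ --- informalities the paper itself shares.
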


\proof Let
$E_1,\ldots,E_m$ denote the sides of $\Omega$.
We define
\[
A_k =\{ \xi\in\Omega : d(\xi) ={\rm dist}(\xi, E_k) ,
\}, \qquad   k=1,\ldots ,m.
\]
Hence the sets $A_k$ have pairwise disjoint
interiors and $\cup_{k=1}^m A_k =\Omega$.
Let $\Pi_k$, $k=1,\ldots,m$, denote the half-spaces determined by
$\Omega$ so that 
\[
E_k\subset \partial\Pi_k ,\;\; k=1,\ldots, m,
\quad \mbox{ and} \qquad \Omega=\bigcap_{k=1}^m \Pi_k.
\]
We then have
\be
d_N(\xi) ={\rm dist}_N(\xi , \partial \Pi_k) =:d_{N,k}(\xi)
\; , \qquad \mbox{ for all }\xi\in A_k.
\la{geom}
\ee
It immediately follows from \eqref{geom} that
\[
d_N(\xi) =\min_{1\leq k\leq m} d_{N,k}(\xi),
\qquad \xi\in\Omega.
\]
We fix a non-negative function
$\phi\in\cic(\Omega)$ and we aim to show that
\be
\int_{\Omega}
 \nabla_{H}d_N \cdot 
 \nabla_{H} \phi \, {\rm d}\xi 
=\sum_{k=1}^m  \int_{A_k} \nabla_{H}d_{N,k} \cdot 
 \nabla_{H} \phi \, {\rm d}\xi  \geq 0.
\la{ibp}
\ee
We recall the divergence theorem in the stratified setting:
if the vector field $F$ takes values in the first stratum and
sufficient regularity is assumed then
\[
\int_{A}{\rm div}_{H}F \, {\rm d}\xi =
\int_{\partial A} F\cdot \nu_{H} dS \, ,
\]
where
\be
\nu_{H} =(\nu_x +2 \nu_t y , \nu_y-2\nu_t x)
\la{nu}
\ee
and  $\nu=(\nu_x,\nu_y,\nu_t)$ denotes the usual outer normal 
vector.

We cannot directly apply integration by part to each of the 
integrals in the RHS of \eqref{ibp}
due to the fact that, as is seen
from Lemma \ref{lem:d_Nformula}, there is a halfline $L_k\subset \Pi_k$
on which the function $d_{N,k}$ is not differentiable (the halfline $L_k$ is the image of the halfline $\{ (0,0,t) : t>0\}$
under the group action that maps $\Pi_0$ onto $\Pi_k$).
We define the set
\[
K =  \bigcup_{k=1}^m L_k 
\]
and so $d_N$ is $C^1$ in $\Omega\setminus K$.

We now use a standard cut-off argument.
We denote by $d_E(\cdot,K)$ the Euclidean distance to the set 
$K$ and we consider smooth functions $\psi_{\epsilon}$,
$\epsilon>0$, such that
$0\leq \psi_{\epsilon}\leq 1$ and
\[
\left\{
\begin{array}{ll}
\psi_{\epsilon}(\xi) =0, & \mbox{ if }d_E(\xi, K) 
< \epsilon , \\[0.1cm]
\psi_{\epsilon}(\xi) =1 , & \mbox{ if }d_E(\xi, K)
 > 2\epsilon , \\[0.1cm]
|\nabla\psi_{\epsilon}(\xi)| \leq \frac{c}{\epsilon} ,
& \mbox{ for all }\xi\in\H.
\end{array}
\right.
\]
We then define $\phi_{\epsilon}=\phi \, \psi_{\epsilon}$.

Let $k\in\{1,\ldots,m\}$.
Integrating by parts and using Proposition \ref{PROP:half-space}
we obtain
\begin{align*}
\int_{A_k}
 \nabla_{H}d_{N,k} \cdot \nabla_{H}\phi_{\epsilon} \, {\rm d}\xi 
 = & - \int_{A_k}\phi_{\epsilon} \, \Delta_{H} d_{N,k}
   \, {\rm d}\xi  +
  \int_{\partial A_k} \!\!
\phi_{\epsilon} \, \nabla_{H}d_{N,k} \cdot \nu_{k,H} \, dS \\
\geq & \int_{\partial A_k} \!\!
\phi_{\epsilon} \, 
\nabla_{H}d_{N,k} \cdot \nu_{k,H} \,  dS.
\end{align*}
where $\nu_{k,H}$ is defined as above relative to $A_k$.
Adding over $k$ we arrive at
\be
\int_{\Omega}
 \nabla_{H}d_N \cdot \nabla_{H}\phi_{\epsilon} \, {\rm d}\xi 
\geq \sum_{k=1}^m  \int_{\partial A_k} \!\!
\phi_{\epsilon} \, 
\nabla_{H}d_{N,k} \cdot \nu_{k,H} \,  dS.
\la{ap}
\ee
Now, each boundary $\partial A_k$ consists of outer
parts where $\phi$ vanishes as well as
of common boundaries with other sets $A_j$, $j\neq k$.
Let us fix $k,j$ such a set $A_k$ and $A_j$
share such a common boundary $S_{kj}$.
Let $\nu$ be the normal vector on $S_{kj}$
which is outer with respect to $A_k$ (the existence of $\nu$ is
shown in the Appendix in case where the two hyperplanes are not
vertical; otherwise one may use an approximation argument,
see also \cite{Lar16}).
Defining $\nu_H=(\nu_x +2 \nu_t y , \nu_y-2\nu_t x)$
we conclude that the
two contributions on the surface $S_{kj}$ from $A_k$
and $A_j$ add up to
\[
\int_{S_{kj}} \!\!
\phi_{\epsilon} \, 
(\nabla_{H}d_{N,k} - \nabla_{H}d_{N,j})\cdot \nu_{H} dS.
\]
The surface $S_{kj}$ is a level set for the function
$d_{N,k}-d_{N,j}$ and at each point $\xi\in S_{kj}$
there holds $ \nabla d_{N,k} - \nabla d_{N,j} =\lambda\nu$
where $\lambda=\lambda(\xi)\geq 0$. We therefore
have on $S_{kj}$
\begin{align*}
&(\nabla_{H}\, d_{N,k} - \nabla_{H}\, d_{N,j})\cdot \nu_{H} 
= \big( \nabla_{\! x} \, d_{N,k} - \nabla_{\! x} \, d_{N,j}
 + 2(d_{N,k})_t y -2(d_{N,j})_t y  ,  \\
 & \quad \quad
 \nabla_{\! y}\, d_{N,k} - \nabla_{\! y}\, d_{N,j}
 - 2(d_{N,k})_t x +2(d_{N,j})_t x    \big)   \cdot 
 \big( \nu_x +2y \nu_t , \nu_y-2x\nu_t\big) \\
&= \big(  \lambda \nu_x +2\lambda\nu_t y ,
\lambda \nu_y -2\lambda\nu_t x \big)\cdot
\big( \nu_x +2 \nu_t y , \nu_y-2\nu_t x\big) \\
& = \lambda\Big( (\nu_x +2 \nu_ty)^2 +
(\nu_y  -2\nu_t x)^2\Big) \\
& \geq 0.  
\end{align*}
Hence the LHS of \eqref{ap} is non-negative. Noting that
$\nabla\phi_{\epsilon}\to \nabla\phi$ in $L^1(\Omega)$ as
$\epsilon\to 0$ completes the proof.
Combining the above completes the proof
of (i).
Part (ii) is an immediate consequence of part (i) and 
Theorem \ref{thmm1}. The  optimality of the constant follows from
Theorem   \ref{thm:half-space.d_N}.\endproof

\

As an immediate consequence of Theorem \ref{THM:Hardy,N,Omega} we obtain the geometric uncertainty principle on the convex set $\Omega \subset \H^n$ with respect to the gauge pseudo-distance on $\H^n$.
\begin{corollary}\label{corollary}
Let $D\subset\Hei^n$ be either a bounded convex polytope or an arbitrary half-space in $\H^n$
and let 
\[
d_{N}(\xi)={\rm dist}_{N}(\xi,\partial D)
\]
denote
the corresponding  pseudodistance of $\xi \in D$ to the boundary $\partial D$. Then for any $u \in C_{c}^{\infty}(D)$ we have
\[
\left(\int_{D} |\nabla_{\H^n}u|^2 {\rm d}\xi \right)^{\frac{1}{2}}\left(\int_{D} d_{N}^{2}u^2
{\rm d}\xi\right)^{\frac{1}{2}}\geq \frac{1}{2} \int_{D} u^2 {\rm d}\xi\,.
\]
\end{corollary}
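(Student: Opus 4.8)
The plan is to read off the inequality from the sharp $L^2$ geometric Hardy inequalities already proved — Theorem \ref{THM:Hardy,N,Omega}(ii) when $D$ is a bounded convex polytope and Theorem \ref{thm:half-space.d_N} (with $p=2$) when $D$ is a half-space — by coupling them with a single Cauchy--Schwarz inequality. In both cases we have available
\[
\int_D |\nabla_{\H^n}u|^2\,{\rm d}\xi \ge \frac14\int_D \frac{|\nabla_{\H^n}d_N|^2}{d_N^2}\,u^2\,{\rm d}\xi,\qquad u\in\cic(D),
\]
and the whole point is to manufacture $\int_D u^2\,{\rm d}\xi$ out of the two weights $|\nabla_{\H^n}d_N|^2/d_N^2$ and $d_N^2$ appearing on the two sides of the desired product.

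Concretely, for $u\in\cic(D)$ I would split the integrand symmetrically and apply Cauchy--Schwarz,
\[
\int_D |\nabla_{\H^n}d_N|\,u^2\,{\rm d}\xi
=\int_D \frac{|\nabla_{\H^n}d_N|}{d_N}\,|u|\cdot d_N|u|\,{\rm d}\xi
\le \Big(\int_D \frac{|\nabla_{\H^n}d_N|^2}{d_N^2}u^2\,{\rm d}\xi\Big)^{\!1/2}\Big(\int_D d_N^2u^2\,{\rm d}\xi\Big)^{\!1/2},
\]
and then estimate the first factor on the right by the Hardy inequality above, obtaining
\[
\int_D |\nabla_{\H^n}d_N|\,u^2\,{\rm d}\xi \le 2\Big(\int_D |\nabla_{\H^n}u|^2\,{\rm d}\xi\Big)^{\!1/2}\Big(\int_D d_N^2u^2\,{\rm d}\xi\Big)^{\!1/2}.
\]
This already has exactly the form of the asserted inequality, with the single discrepancy of the weight $|\nabla_{\H^n}d_N|$ on the left.

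The main obstacle is therefore precisely the handling of this factor $|\nabla_{\H^n}d_N|$, and this is the step I would scrutinise most carefully. The relevant pointwise information is contained in the proof of Proposition \ref{thmm2}: there one finds $|\nabla_{\H^n}d_N|^2=(1+s^2)^{-1/2}$, with $s=s(r,t)$ the solution of \eqref{sp2}, so that $0<|\nabla_{\H^n}d_N|\le 1$ throughout $D$ (on a polytope, on each cell $A_k$ in the notation of the proof of Theorem \ref{THM:Hardy,N,Omega}). One then has to confirm that this factor may be absorbed so as to replace $\int_D|\nabla_{\H^n}d_N|\,u^2$ by $\int_D u^2$ while keeping the constant $1/2$; it is the direction and uniformity of the bound on $|\nabla_{\H^n}d_N|$, rather than the Cauchy--Schwarz and Hardy combination itself, that constitutes the genuinely delicate point of the argument.
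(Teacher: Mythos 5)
Your route --- the $L^2$ Hardy inequality of Theorem \ref{THM:Hardy,N,Omega}(ii) (resp.\ Theorem \ref{thm:half-space.d_N} with $p=2$) combined with one Cauchy--Schwarz --- is exactly the route the paper takes, and everything you write up to the weighted inequality
\[
\Big(\int_D |\nabla_{\H^n}u|^2\,{\rm d}\xi\Big)^{1/2}\Big(\int_D d_N^2u^2\,{\rm d}\xi\Big)^{1/2}\ \ge\ \frac12\int_D |\nabla_{\H^n}d_N|\,u^2\,{\rm d}\xi
\]
is correct. You are also right that the remaining step is the crux, and right about why: by the computation in the proof of Proposition \ref{PROP:half-space} one has $|\nabla_{\H^n}d_N|^2=(1+s^2)^{-1/2}\le 1$, with strict inequality away from the boundary, so the available bound on $|\nabla_{\H^n}d_N|$ goes the \emph{wrong} way: it gives $\int_D|\nabla_{\H^n}d_N|\,u^2\,{\rm d}\xi\le\int_D u^2\,{\rm d}\xi$, and therefore the weighted inequality above does not imply the stated one. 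This is a genuine gap, not a technicality that can be left to the reader: no absorption argument can close it, because the inequality your argument produces is strictly weaker than the corollary as stated.

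What you could not know is that the paper's own proof glosses over precisely this point. Citing Theorem \ref{THM:Hardy,N,Omega}(ii), it writes
\[
\Big(\int_D|\nabla_{\H^n}u|^2\,{\rm d}\xi\Big)\Big(\int_D d_N^2u^2\,{\rm d}\xi\Big)\ \ge\ \frac14\Big(\int_D\frac{u^2}{d_N^2}\,{\rm d}\xi\Big)\Big(\int_D d_N^2u^2\,{\rm d}\xi\Big)\ \ge\ \frac14\Big(\int_D u^2\,{\rm d}\xi\Big)^2,
\]
i.e.\ it applies the Hardy inequality with weight $1/d_N^2$ in place of $|\nabla_{\H^n}d_N|^2/d_N^2$ --- exactly the unjustified replacement you flagged, and unjustified for the reason you give, namely $|\nabla_{\H^n}d_N|^2\le1$. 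What the paper's results actually yield is your weighted version; the corollary as stated would require a Hardy inequality \emph{without} the gradient factor, $\int_D|\nabla_{\H^n}u|^2\,{\rm d}\xi\ge\frac14\int_D u^2/d_N^2\,{\rm d}\xi$, which is nowhere established in the paper and does not follow from Theorem \ref{THM:Hardy,N,Omega}(ii). So your proposal is, in substance, the paper's argument carried out carefully, and your final paragraph identifies a real defect in the paper's proof rather than a missing step of your own.
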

\begin{proof}
    A combination of Theorem \ref{THM:Hardy,N,Omega}, Part (ii) and of the Cauchy-Schwarz inequality yields
    \begin{eqnarray*}
       \left( \int_{D} |\nabla_{\H^n}u|^2 {\rm d}\xi 
       \right)
\left(\int_{D} d_{N}^{2}u^2 {\rm d}\xi \right)
& \geq & \frac{1}{4} 
\left(\int_{D} \frac{u^2}{d_{N}^2}{\rm d}\xi
\right)
\left( \int_{D}d_{N}^2 u^2 {\rm d}\xi\right)\\
 & \geq & \frac{1}{4} \left( \int_{D} u^2 {\rm d}\xi\right)^2.
 \end{eqnarray*}
\end{proof}

\subsection{Hardy inequalities with respect to the Carnot-Carath\'{e}odory distance}

In this section we consider geometric Hardy 
inequalities in the Heisenberg group with respect to 
the Carnot-Carath\'eodory distance. 

In the case of the Heisenberg group $\H^n$, it has been proved in \cite{BGG00,CCG07} (see also
\cite{DLZ24}) that the
Carnot-Carath\'{e}odory distance of a point $\xi=(x,y,t)$
to the origin is given by
\be
\rho(\xi) = 
\left\{
\begin{array}{ll}
\frac{\phi}{\sin\phi} r , & 
\mbox{ if }(x,y)\neq (0,0), \\[0.2cm]
\sqrt{\pi |t|} , & \mbox{ if } (x,y)=(0,0),
\end{array}
\right.
\la{444}
\ee
where $r=\sqrt{|x|^2+|y|^2}$ and
the angle $\phi\in  (0 , \pi)$ is uniquely
determined by the requirement
\be
\la{mu}
\mu(\phi):= \frac{2\phi -\sin(2\phi)}{2\sin^2\phi}=
\frac{t}{r^2}.
\ee

For a given domain $\Omega\subset\Hei^n$ the induced 
distance to the boundary is given by
\be
d_{\rho}(\xi) = {\rm dist}_{\rho}(\xi,  \partial\Omega)
=\inf\{ \rho((\xi')^{-1}\xi) , \; \xi' \in \partial\Omega\},
\qquad \xi\in\Omega.
\la{ntorb}
\ee
As in Section \ref{sec:gauge}, we first consider the case
of the half-space
\[
\Pi_0 =\{ (x,y,t) \in \Hei^n : t>0\}.
\]
By the above, it is then easy to see that for $\xi=(x,y,t)\in \Pi_0$
and $\xi'=(x',y',0)\in\partial \Pi_0$ we have
\be
d_{\rho}(\xi ,\xi') =
\left\{
\begin{array}{ll}
\frac{\phi}{\sin\phi}\sqrt{|x'-x|^2+|y'-y|^2} ,
 & (x',y')\neq (x,y) , \\[0.2cm]
\sqrt{\pi  t } 
\, , & (x',y')= (x,y),
\end{array}
\right.
\la{we}
\ee
where $\phi\in (0,\pi)$ is implicitly given by
\be
\la{phi}
\mu(\phi) =\frac{t +2(x\cdot y'-x'\cdot y)}{|x'-x|^2+|y'-y|^2}.
\ee
We set
\[
g(\phi)=\frac{\phi}{\sin\phi}
\]
and define the function
\[
F(x',y') =g(\phi) \sqrt{|x'-x|^2+|y'-y|^2} ,
\qquad (x',y')\neq (x,y).
\]
where $\phi\in  (0 , \pi)$ is determined by
\eqref{phi} with $\xi\in\Pi_0$ being fixed.

\begin{lemma}
\la{lem2}
Let $\xi=(x,y,t)\in \Pi_0$ with $(x,y)\neq (0,0)$. The function
$F(x',y')$ has a unique critical point $(x',y')$ in the set
$\R^{2n}\setminus \{(x,y)\}$. Moreover
this critical point is given
by
\be
\left\{
\begin{array}{l}
(x_i'-x_i)\cot \phi =y_i \;  , \\ 
(y_i'-y_i)\cot\phi = -x_i  \; , \qquad i=1,\ldots,n,
\end{array}
\right.
\la{tan}
\ee
where $\phi$ is the unique solution in $(0,\pi/2)$ of the equation
\be
\la{phi1}
 \frac{ 2\phi +\sin(2\phi)}{2\cos^2\phi} =
 \frac{t}{r^2}.
\ee
\end{lemma}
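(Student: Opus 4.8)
The plan is to regard $\phi=\phi(x',y')$ as the function defined implicitly by \eqref{phi} and to differentiate $F=g(\phi)\sqrt{D}$, where I abbreviate $D=|x'-x|^2+|y'-y|^2$ and $P=t+2(x\cdot y'-x'\cdot y)$, so that \eqref{phi} reads $\mu(\phi)=P/D$. Differentiating this constraint gives $\mu'(\phi)\,\partial_{x_i'}\phi=(P_{x_i'}D-P\,D_{x_i'})/D^2$, with $P_{x_i'}=-2y_i$, $D_{x_i'}=2(x_i'-x_i)$ and symmetrically $P_{y_i'}=2x_i$, $D_{y_i'}=2(y_i'-y_i)$. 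Substituting the resulting $\partial_{x_i'}\phi$ and $\partial_{y_i'}\phi$ into
\[
F_{x_i'}=g'(\phi)\,\partial_{x_i'}\phi\,\sqrt{D}+g(\phi)\,\frac{x_i'-x_i}{\sqrt{D}},\qquad
F_{y_i'}=g'(\phi)\,\partial_{y_i'}\phi\,\sqrt{D}+g(\phi)\,\frac{y_i'-y_i}{\sqrt{D}},
\]
setting these to zero and clearing denominators (using $P=\mu(\phi)D$) yields, after cancelling the common factor $D$, the relations $x_i'-x_i=c\,y_i$ and $y_i'-y_i=-c\,x_i$ with a single scalar
\[
c=\frac{2g'(\phi)}{g(\phi)\mu'(\phi)-2g'(\phi)\mu(\phi)},
\]
independent of $i$.

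The key step is to identify $c$. For $g(\phi)=\phi/\sin\phi$ and $\mu(\phi)$ as in \eqref{mu} one computes $g'(\phi)=(\sin\phi-\phi\cos\phi)/\sin^2\phi$ and $\mu'(\phi)=2(\sin\phi-\phi\cos\phi)/\sin^3\phi$, which give the two identities
\[
\mu'(\phi)\sin\phi=2g'(\phi),\qquad g(\phi)-\mu(\phi)\sin\phi=\cos\phi ;
\]
the second follows at once from $\mu(\phi)\sin\phi=\phi/\sin\phi-\cos\phi$. Feeding these into the formula for $c$ reduces its numerator to $\mu'(\phi)\sin\phi$ and its denominator to $\mu'(\phi)\big(g(\phi)-\mu(\phi)\sin\phi\big)=\mu'(\phi)\cos\phi$, so that $c=\tan\phi$. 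This is exactly \eqref{tan}, rewritten as $(x_i'-x_i)\cot\phi=y_i$ and $(y_i'-y_i)\cot\phi=-x_i$.

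It remains to pin down $\phi$ and to settle uniqueness. From \eqref{tan} I get $D=r^2\tan^2\phi$ and, since $x\cdot y'-x'\cdot y=-r^2\tan\phi$, also $P=t-2r^2\tan\phi$. Inserting these into $\mu(\phi)=P/D$ and using $\mu(\phi)\sin^2\phi=\phi-\sin\phi\cos\phi$ collapses the relation to
\[
\frac{t}{r^2}=2\tan\phi+\mu(\phi)\tan^2\phi=\frac{2\phi+\sin 2\phi}{2\cos^2\phi},
\]
which is \eqref{phi1}. To conclude, set $h(\phi)=(2\phi+\sin 2\phi)/(2\cos^2\phi)$; with $u=2\phi$ one has $h=(u+\sin u)/(1+\cos u)$ and $h'(u)=\big(2(1+\cos u)+u\sin u\big)/(1+\cos u)^2>0$ on $u\in(0,\pi)$, while $h\to 0^+$ as $\phi\to 0^+$ and $h\to+\infty$ as $\phi\to\pi/2^-$. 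Hence $h$ is a strictly increasing bijection of $(0,\pi/2)$ onto $(0,\infty)$, so for each $t/r^2>0$ there is exactly one $\phi\in(0,\pi/2)$ solving \eqref{phi1}, and \eqref{tan} then determines $(x',y')$ uniquely.

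The difficulty is concentrated in two places. The first is the bookkeeping of the implicit differentiation: the two trigonometric identities above are precisely what make the unwieldy quotient defining $c$ collapse to $\tan\phi$, and verifying them is the computational heart of the argument. The second is the branch selection underlying uniqueness. The constraint $\mu(\phi)=P/D$ only forces $\phi\in(0,\pi)$, and a configuration satisfying \eqref{tan} could a priori carry an angle in $[\pi/2,\pi)$; I would therefore confirm that the nearest-point critical point is the one on the branch $\phi\in(0,\pi/2)$ where $h$ is the above bijection -- for instance by comparing the values of $F$ at the competing critical points -- so that \eqref{phi1} is read on the correct interval.
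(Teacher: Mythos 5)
Your computational core is correct and follows essentially the same route as the paper: implicit differentiation of \eqref{phi}, reduction of the critical-point equations to $x_i'-x_i=c\,y_i$, $y_i'-y_i=-c\,x_i$, identification $c=\tan\phi$, and then the derivation of \eqref{phi1} from \eqref{tan}. Your two identities $\mu'(\phi)\sin\phi=2g'(\phi)$ and $g(\phi)-\mu(\phi)\sin\phi=\cos\phi$ are a tidier packaging of the same algebra the paper does by brute force (it computes $g\mu'-2g'\mu=2\cos\phi(\sin\phi-\phi\cos\phi)/\sin^3\phi$ directly), and your monotonicity proof for $h(\phi)=(2\phi+\sin 2\phi)/(2\cos^2\phi)$ on $(0,\pi/2)$ is a genuine addition, since the paper takes the unique solvability of \eqref{phi1} in $(0,\pi/2)$ for granted. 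One small omission: your formula for $c$ divides by $g\mu'-2g'\mu=\mu'\cos\phi$, which vanishes at $\phi=\pi/2$; you need to observe (as the paper does) that a critical point with $\phi=\pi/2$ would force $x_i=y_i=0$ for all $i$, contradicting $(x,y)\neq(0,0)$.

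The issue you flag at the end and leave as a declared intention is, however, a genuine gap, and it is exactly where the content of the lemma sits. The constraint \eqref{phi} only places the angle of a critical point in $(0,\pi)$, and the branch $(\pi/2,\pi)$ cannot be dismissed by the algebra alone: since $h(\phi)\to+\infty$ as $\phi\to\pi/2^{+}$ and $h(\phi)\to\pi$ as $\phi\to\pi^{-}$, equation \eqref{phi1} does have roots $\phi^{*}\in(\pi/2,\pi)$ once $t/r^2$ is large enough, and for such a root the point defined by \eqref{tan} has $D=r^2\tan^2\phi^{*}$, $P=t-2r^2\tan\phi^{*}>0$ and $P/D=\mu(\phi^{*})$, so it satisfies all the first-order equations $F_{x_i'}=F_{y_i'}=0$. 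Thus excluding this branch is not a cosmetic check; it carries the whole weight of the uniqueness assertion. The paper closes this step by a continuity argument: the angle attached to a critical point defines a map $(r,t)\mapsto\phi(r,t)\in(0,\pi)$ which is continuous, never equal to $\pi/2$, and tends to $0$ as $t\to 0^{+}$ for fixed $r>0$, hence remains in $(0,\pi/2)$. Note also that your proposed repair --- comparing the values of $F$ at the competing critical points --- proves a different statement: it would identify the minimizer (which is what Lemma \ref{thm2} ultimately needs), but it cannot show that the other critical points are absent, which is what this lemma asserts. As written, your argument establishes \eqref{tan}, \eqref{phi1}, and uniqueness of the root of \eqref{phi1} in $(0,\pi/2)$, but not uniqueness of the critical point; to finish you need an argument in the spirit of the paper's branch-tracking step rather than a value comparison.
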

\proof
For notational simplicity we set $X=(x,y)$ and
$X'=(x',y')\neq X $. We then have
\be
\la{ol1}
F_{x_i'}(x',y') =g'(\phi)\phi_{x_i'}|X'-X| + 
g(\phi) \frac{x_i'-x_i}{|X'-X|}.
\ee
Differentiating \eqref{phi} we get
\begin{align*}
\mu'(\phi)\phi_{x_i'} = & -\frac{2y_i}{|X'-X|^2} -
\frac{2\big(t +2(x\cdot y'-x'\cdot y)\big))(x_i'-x_i)}{|X'-X|^4} \\
= & \frac{-2y_i - 2\mu(\phi)\, (x_i'-x_i)}{|X'-X|^2}.
\end{align*}
Hence going back to \eqref{ol1} we see that
$F_{x_i'}(x',y')=0$ if and only if
\be
\Big( g(\phi)\mu'(\phi) -2g'(\phi)\mu(\phi)\Big)
(x_i'-x_i) =  2g'(\phi)y_i.
\la{ol2}
\ee
But
\[
\mu'(\phi)=\frac{2(\sin\phi -\phi\cos\phi)}{\sin^3\phi} ,
\qquad g'(\phi) =\frac{\sin\phi -\phi\cos\phi}{\sin^2\phi}
\]
and 
\[
g(\phi)\mu'(\phi) -2g'(\phi)\mu(\phi) = 2\; \frac{\cos \phi
( \sin\phi -t\cos\phi)}{ \sin^3\phi}.
\]
so \eqref{ol2} takes the form $(x_i'-x_i)\cot \phi =y_i$.
Similar considerations show that $F_{y_i'}(x',y')=0$
if and only if $(y_i'-y_i)\cot\phi =-x_i$. Hence we conclude
that a point $(x',y')\neq (x,y)$ is a critical
point of $F$ if and only if \eqref{tan} is satisfied
where $\phi\in (0,\pi)$ is given by \eqref{phi}.

Assume now that $(x',y')\neq (x,y)$ is a critical point of the function $F$.
From \eqref{tan} we then have that $\phi\neq \pi/2$.
Moreover, again from \eqref{tan},
\[
x_i(x_i'-x_i)+y_i(y_i'-y_i)=0 \, , \qquad i=1,\ldots,n.
\]
and
\[
x\cdot y'-x'\cdot y =x\cdot (y'-y)-(x'-x)\cdot y 
=- r^2 \tan\phi .
\]
It then follows that
\be
\la{tan2}
|x'-x|^2+|y'-y|^2 =r^2 \tan^2\phi .
\ee
These together with \eqref{phi} give
\[
\mu(\phi) = \frac{ t-2r^2 \tan\phi}{( r^2 \tan^2\phi}.
\]
Recalling the definition of the function $\mu$, cf. 
\eqref{mu},  we conclude that $\phi$ satisfies \eqref{phi1}.
This procedure defines a map
\[
  \Pi_0 \setminus\{ (0,0,t) : t>0\} \ni (x,y,t) \mapsto \phi=\phi(x,y,t)  \in (0,\pi).
\]
We shall write $\phi=\phi(r,t)$, $r=\sqrt{|x|^2+|y|^2}$.
This map is clearly continuous and, as we have seen, we 
have $\phi(r,t)\neq \pi/2$ for all $r,t>0$. But,
by \eqref{phi1}, $\phi(r,t)\to 0$ as $t\to 0+$ for any fixed
$r>0$, hence by continuity we obtain that in fact
$ \phi(r,t)\in (0,\pi/2)$ for all $r,t>0$.

Hence given $(x,y,t)\in \Pi_0$ with $(x,y)\neq (0,0)$,
if a critical point $(x',y')\neq (x,y)$ exists then it is
unique and it is determined by \eqref{phi1} and
\eqref{tan}.

To complete the proof we observe that the argument works both 
ways: if $\phi\in (0,\pi/2)$ is defined by \eqref{phi1} then 
the point $(x',y')$ defined by \eqref{tan} is different
from $(x,y)$ and is indeed a critical point of $F$.
\endproof

\begin{lemma}
\la{thm2}
The Carnot-Carath\'{e}odory distance of a point
$\xi=(x,y,t)\in \Pi_0$ to the boundary $\partial\Pi_0$
is given by
\[
d_\rho(\xi) = \left\{
\begin{array}{ll}
\frac{\phi}{\cos\phi} r , &
\mbox{ if }(x,y)\neq (0,0) ,\\[0.2cm]
\sqrt{\frac{\pi t}{2}}, & \mbox{ if }(x,y)= (0,0),
\end{array}
\right.
\]
where $r=\sqrt{|x|^2+|y|^2}$ and
in the first case $\phi\in (0,\pi/2)$ is uniquely 
determined by the requirement
\be
\la{phi10}
 \frac{ 2\phi +\sin(2\phi)}{2\cos^2\phi} =\frac{t}{r^2}.
\ee
Moreover in case $(x,y)\neq (0,0)$ the distance is realized
at a unique point $\xi'\in\partial\Pi_0$ while in case
$(x,y)=(0,0)$ it is realized at all points of the circle
with center at the origin and radius $\sqrt{2 t/\pi}$.
\end{lemma}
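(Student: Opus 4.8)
The plan is to realise $d_\rho(\xi)$ as the minimum, over all feet $X'=(x',y')\in\R^{2n}$, of the function
\[
\Phi(X')=\rho\big((x',y',0)^{-1}\xi\big),
\]
the Carnot--Carath\'eodory distance from $\xi$ to the boundary point $(x',y',0)$. Writing $X=(x,y)$ and $\tau=t+2(x\cdot y'-x'\cdot y)$ for the vertical coordinate of $(x',y',0)^{-1}\xi=(X-X',\tau)$, formula \eqref{we} gives $\Phi=F$ on $\{\tau>0\}$ away from $X'=X$, where $F$ is the function of Lemma \ref{lem2}. The first step is a uniform lower bound: since the prefactor $g(\phi)=\phi/\sin\phi\ge1$, and since the $t\mapsto-t$ symmetry of $\rho$ (visible in \eqref{444}) keeps this prefactor $\ge1$ on $\{\tau<0\}$ as well, we get $\Phi(X')\ge|X'-X|$ for every $X'$. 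In particular $\Phi$ is continuous and coercive, so its infimum is attained at some $X_*$.

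Assume first $(x,y)\neq(0,0)$. By Lemma \ref{lem2}, $F$ has a unique critical point in $\{\tau>0\}\setminus\{X\}$, determined by \eqref{tan} with $\phi\in(0,\pi/2)$ solving \eqref{phi10}; using $|X'-X|^2=r^2\tan^2\phi$ from \eqref{tan2}, its value is
\[
F=g(\phi)\,r\tan\phi=\frac{\phi}{\sin\phi}\,r\,\frac{\sin\phi}{\cos\phi}=\frac{\phi}{\cos\phi}\,r,
\]
the conjectured value of $d_\rho(\xi)$. It remains to show that no competitor yields a smaller value, which is the technical heart of the argument.

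There are two remaining regions to control. At the single point $X'=X$ formula \eqref{we} gives $\Phi=\sqrt{\pi t}$; substituting $t=r^2(2\phi+\sin2\phi)/(2\cos^2\phi)$ from \eqref{phi10} reduces $\frac{\phi}{\cos\phi}r<\sqrt{\pi t}$ to the elementary inequality $\phi^2<\pi\phi+\tfrac{\pi}{2}\sin2\phi$, which holds on $(0,\pi/2)$. On the closed half-space $\{\tau\le0\}$ I would exploit the uniform bound: writing $\tau=t+\langle v,X'\rangle$ with $v=(-2y,2x)$, one has $|v|=2r$ and $\langle v,X\rangle=0$, so the Euclidean distance from $X$ to $\{\tau\le0\}$ equals $t/(2r)$, whence $\Phi(X')\ge|X'-X|\ge t/(2r)$ there; and $\frac{\phi}{\cos\phi}r<t/(2r)$ reduces, again via \eqref{phi10}, to the one-variable inequality $\phi-2\phi\cos\phi+\tfrac12\sin2\phi>0$ on $(0,\pi/2)$, checked by differentiation. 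Since $\Phi$ is smooth on $\{\tau>0\}\setminus\{X\}$ and strictly exceeds $\frac{\phi}{\cos\phi}r$ on $\{X\}\cup\{\tau\le0\}$, the attained minimiser $X_*$ must lie in $\{\tau>0\}\setminus\{X\}$ and hence be the unique critical point of Lemma \ref{lem2}; this gives $d_\rho(\xi)=\frac{\phi}{\cos\phi}r$, realised at the unique boundary point $\xi'$ of \eqref{tan}.

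Finally, for $(x,y)=(0,0)$ the datum is rotationally symmetric and $\tau=t$, so with $R=|X'|$ the problem collapses to minimising $g(\phi)R$ subject to $\mu(\phi)=t/R^2$; eliminating $R$ gives $\Phi=\big(2t\,h(\phi)\big)^{1/2}$ with $h(\phi)=\phi^2/(2\phi-\sin2\phi)$, and a short computation of $h'$ shows the minimum is at $\phi=\pi/2$, yielding $d_\rho=\sqrt{\pi t/2}$; as $R$ is then pinned but the direction of $X'$ is free, the distance is realised on the entire sphere $\{|X'|=\sqrt{2t/\pi}\}$, i.e. the stated circle. The main obstacle throughout is the piecewise-smooth nature of $\Phi$: a priori the interior critical point could be a saddle, and the whole argument turns on the uniform bound $\Phi\ge|X'-X|$ together with the two trigonometric inequalities that strictly eliminate the non-smooth competitors $X'=X$ and $\{\tau\le0\}$.
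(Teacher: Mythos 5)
Your proposal is correct and follows essentially the same route as the paper's proof: identify the minimiser through the critical-point analysis of Lemma \ref{lem2}, evaluate $F$ there via \eqref{tan2} to get $\frac{\phi}{\cos\phi}r$, rule out the exceptional competitor $X'=X$ (value $\sqrt{\pi t}$) by the same elementary inequality $\phi^2<\tfrac{\pi}{2}\bigl(2\phi+\sin(2\phi)\bigr)$, and reduce the axis case to the one-variable minimisation of $\phi^2/(2\phi-\sin(2\phi))$, minimised at $\phi=\pi/2$. The genuine difference is one of rigor rather than of route: the paper tacitly treats Lemma \ref{lem2} as covering all of $\R^{2n}\setminus\{(x,y)\}$, although $F$ is defined through \eqref{phi} only where $\tau=t+2(x\cdot y'-x'\cdot y)>0$, and it never addresses attainment of the infimum. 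Your uniform bound $\Phi(X')\geq|X'-X|$ supplies coercivity and existence of a minimiser, and your estimate $\Phi\geq t/(2r)$ on $\{\tau\le 0\}$, combined with the inequality $\phi-2\phi\cos\phi+\tfrac12\sin(2\phi)>0$ on $(0,\pi/2)$ (which indeed follows by differentiation, since its derivative equals $2\bigl(\phi\sin\phi-\cos\phi(1-\cos\phi)\bigr)>0$), explicitly excludes minimisers in the region where \eqref{phi} has no solution $\phi\in(0,\pi)$. Both of your auxiliary trigonometric inequalities check out, so your argument is a self-contained version of the paper's proof that patches the points the paper leaves implicit; what the paper's shorter presentation buys is economy, at the cost of relying on an unstated even-in-$\tau$ extension of $F$ in Lemma \ref{lem2}.
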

\proof 
(i) {\em Case $(x,y)\neq (0,0)$.}
Let $\xi=(x,y,t)\in\Pi_0$ with $(x,y)\neq (0,0)$ be given
and let $(x',y')\neq (x,y)$ be the critical point of $F$
determined in Lemma \ref{lem2}. Writing $\xi'=(x',y',0)$
and recalling \eqref{we} and \eqref{tan2} we have
\begin{align}
d_{\rho}(\xi,\xi') =& F(x',y') \nonumber \\
= & \frac{\phi}{\sin\phi} \sqrt{|x'-x|^2+|y'-y|^2} \nonumber\\
= & \frac{\phi}{\sin\phi} r \; \tan\phi \nonumber\\
= & \frac{\phi}{\cos\phi} r .
\la{kron}
\end{align}
The proof will be complete if we prove that
the distance of $\xi$ to the point
$(x,y,0)$  is strictly larger
than $d_{\rho}(\xi , \xi')$.
To see this we recall that, by \eqref{we},
\[
d_{\rho}\big(  \xi \, , \, (x,y,0) \big) =\sqrt{\pi t}.
\]
Applying the elementary inequality
\[
\phi^2 <\frac{\pi}{2} \big(  2\phi +\sin(2\phi) \big) \; ,
\qquad 0<\phi<\frac{\pi}{2}.
\]
we thus have
\begin{align*}
d_{\rho}^2(\xi,\xi') & =\frac{\phi^2}{\cos^2\phi} r^2 \\
&< \frac{2\phi +\sin(2\phi)}{2 \cos^2\phi} \pi r^2 \\
& = \frac{t}{r^2} \, \pi r^2 \\
& = d_{\rho}^2\big(  \xi \, , \, (x,y,0) \big) .
\end{align*}

\noindent
(ii) {\em Case $(x,y)= (0,0)$.}
Let $t>0$ be fixed. We recall that
\[
d_{\rho}\big( (0,0,t) \, , (0,0,0)  \big) =\sqrt{\pi t}.
\]
Now, for $(x',y')\neq (0,0)$ we define
\[
G(x',y')=d_{\rho}^2\big(  (x',y',0) \, , \, (0,0,t) \big)
= \frac{\phi^2}{\sin^2\phi}r'^2
\]
where $r'^2 =x'^2+y'^2$ and $\phi\in (0,\pi)$ is defined by
\[
\frac{2\phi -\sin(2\phi)}{2\sin^2\phi}=
\frac{t}{r'^2}.
\]
Hence
\[
G(x',y')  = \frac{2\phi^2}{ 2\phi -\sin(2\phi)}t.
\]
The function $\phi\mapsto \phi^2/(2\phi -\sin(2\phi))$
is minimized for $\phi=\pi/2$ in which case
it is equal to $\pi /2$. Hence $d_{\rho}(0,0,t)=\sqrt{\pi t/2}$.
Moreover we have $\phi=\pi/2$
precisely for the points $(x',y')$ for which
$r'^2=2t/\pi$. This completes the proof.
\endproof

\begin{proposition}\label{prop:drho}
Let $\Pi_0=\{(x,y,t) \in \H^n, t>0\}$ and let $d_\rho=d_\rho(\xi)$, $\xi \in \Pi_0$, denote the corresponding Carnot-Carath\'eodory distance to the boundary $\partial \Pi_0$. Then for any fixed $r \neq 0$ we have 
\[
d_\rho(\xi)=\frac{t}{2r}+o(t)\,,\quad  \mbox{as $t \rightarrow 0^{+}$. }
\]
\end{proposition}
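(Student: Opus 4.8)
The plan is to extract the small-$t$ asymptotics directly from the explicit formula for $d_\rho$ established in Lemma \ref{thm2}, namely $d_\rho(\xi)=\frac{\phi}{\cos\phi}\,r$ where $\phi=\phi(r,t)\in(0,\pi/2)$ is determined implicitly by \eqref{phi10}. The key observation is that for fixed $r>0$ we have $\phi(r,t)\to 0$ as $t\to 0^+$ (this was already noted in the proof of Lemma \ref{lem2}), so the whole computation reduces to a leading-order expansion of the implicit relation \eqref{phi10} near $\phi=0$.

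First I would expand the left-hand side of \eqref{phi10} as $\phi\to 0$. Using $\sin(2\phi)=2\phi-\tfrac{(2\phi)^3}{6}+\cdots$ and $\cos^2\phi=1-\phi^2+\cdots$, one finds that the numerator $2\phi+\sin(2\phi)=4\phi+O(\phi^3)$, so
\[
\frac{2\phi+\sin(2\phi)}{2\cos^2\phi}=\frac{4\phi+O(\phi^3)}{2(1+O(\phi^2))}=2\phi+O(\phi^3).
\]
Setting this equal to $t/r^2$ gives $2\phi=\frac{t}{r^2}+O(\phi^3)$, and hence $\phi=\frac{t}{2r^2}+o(t)$ as $t\to 0^+$. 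Next I would substitute this into the distance formula: since $\cos\phi=1+O(\phi^2)=1+o(1)$, we obtain
\[
d_\rho(\xi)=\frac{\phi}{\cos\phi}\,r=\phi r\,(1+o(1))=\frac{t}{2r^2}\cdot r+o(t)=\frac{t}{2r}+o(t),
\]
which is exactly the claimed asymptotic.

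There is no serious obstacle here; the statement is a one-variable asymptotic and the only care needed is bookkeeping of error terms. The mildly delicate point is justifying that $\phi(r,t)\to 0$ (rather than approaching some other value) as $t\to 0^+$, but this is already guaranteed by Lemma \ref{thm2}, which asserts that \eqref{phi10} has a unique solution $\phi\in(0,\pi/2)$; since the left-hand side of \eqref{phi10} is continuous and increasing in $\phi$ with value $0$ at $\phi=0$, and since $t/r^2\to 0$, the implicit function $\phi(r,t)$ must tend to $0$. I would state this inversion explicitly and then read off $\phi\sim t/(2r^2)$ from the leading term. I note that the claimed order $o(t)$ is weaker than what the expansion actually yields (one in fact gets $d_\rho=\frac{t}{2r}+O(t^3)$, matching Proposition \ref{propp3} for the gauge distance), so only the leading coefficient is what must be pinned down, and that follows immediately from the two displayed expansions above.
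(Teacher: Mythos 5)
Your proposal is correct and follows exactly the route the paper intends: the paper's proof is the one-line remark that the result ``follows using standard arguments by Lemma \ref{thm2}, since by \eqref{phi10} $t \to 0$ implies $\phi \to 0$,'' and your expansion of \eqref{phi10} (using monotonicity of the left-hand side to justify $\phi\to 0$, then $\phi = \tfrac{t}{2r^2}+O(t^3)$ and $d_\rho = \tfrac{\phi}{\cos\phi}r = \tfrac{t}{2r}+O(t^3)$) is precisely the standard argument being invoked. Your observation that the expansion actually yields the stronger error $O(t^3)$ is also accurate.
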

\begin{proof}
  The proof  follows 
from Lemma \ref{thm2} using standard arguments, since by \eqref{phi10} $t \rightarrow 0$ implies that $\phi \rightarrow 0$.
\end{proof}

\begin{theorem}
\label{THM:Hardy_halfsp}
Let $\Pi$ be an arbitrary half-space in $\H^n$
and let $d_\rho(\xi)$, $\xi \in \H^n$,
denote the Carnot-Carath\'{e}odory distance to
the  boundary $\partial\Pi$. Then 
\begin{align*}
{\rm (i)} & \quad  \Delta_{\H^n} d_\rho \leq 0 \; \mbox{ in the distributional
sense in }\Pi .\\
{\rm (ii)} & \quad \mbox{For any $p>1$ the Hardy inequality}
\\[0.2cm]
& \hspace{1.5cm}
 \int_{\Pi} |\nabla_{\H^n} u|^p {\rm d}\xi \geq 
 \Big(\frac{p-1}{p}\Big)^p
\int_{\Pi} \frac{|u|^p}{d_{\rho}^{p}} {\rm d}\xi  \; ,  \quad u\in \cic(\Pi), \\[0.2cm]
& \quad \mbox{is valid. Moreover the constant is the best possible.}
\end{align*}
\end{theorem}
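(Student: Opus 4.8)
The plan is to reduce everything, by the left-invariance and dilation/rotation homogeneity of $\rho$, to the model half-space $\Pi_0=\{(x,y,t):t>0\}$, exactly as in Proposition \ref{PROP:half-space}, and to prove part (i), the distributional inequality $\Delta_{\Hei^n}d_\rho\le 0$, first pointwise on the smooth part $\{r>0\}$ and then across the singular axis. The decisive structural fact is the eikonal identity $|\nabla_{\Hei^n}d_\rho|=1$ a.e., which holds by \cite{MSC01} and which can also be read off directly from Lemma \ref{thm2}. Because of it, the representation \eqref{aek1} collapses: writing $A=|\nabla_{\Hei^n}d_\rho|^2\equiv 1$ on $\{r>0\}$ gives $A_r=A_t=0$, whence $\Delta_{p,\Hei^n}d_\rho=\Delta_{\Hei^n}d_\rho$ for every $p>1$. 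Thus the entire problem reduces to the plain sub-Laplacian, and once part (i) is established, part (ii) for arbitrary $p$ will follow from Theorem \ref{thmm1}(a) (the factor $|\nabla_{\Hei^n}d_\rho|^p=1$ being precisely what removes the gradient from the Hardy potential).

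For the pointwise step I would use $d_\rho=\frac{\phi}{\cos\phi}\,r$ with $\phi=\phi(r,t)\in(0,\pi/2)$ defined implicitly by $\Phi(\phi):=\frac{2\phi+\sin2\phi}{2\cos^2\phi}=t/r^2$, and differentiate this relation implicitly. A short calculation gives $\Phi'(\phi)=\frac{2(\cos\phi+\phi\sin\phi)}{\cos^3\phi}$, and hence $d_r=-\sin\phi$ and $d_t=\frac{\cos\phi}{2r}$; in particular $d_r^2+4r^2d_t^2=1$, which re-derives the eikonal identity. Differentiating once more and inserting into $\Delta_{\Hei^n}d_\rho=d_{rr}+\frac{2n-1}{r}d_r+4r^2d_{tt}$, the terms combine (after $\sin2\phi=2\sin\phi\cos\phi$) into
\[
\Delta_{\Hei^{n}} d_\rho = \frac{\phi\cos^2\phi - (2n-1)\big(\phi\sin^2\phi + \sin\phi\cos\phi\big)}{r\,(\cos\phi+\phi\sin\phi)}.
\]
On $(0,\pi/2)$ the denominator is positive and the numerator is negative: as a function of $k=2n-1\ge 1$ it is strictly decreasing, since the bracket $\phi\sin^2\phi+\sin\phi\cos\phi$ multiplying $k$ is positive, so it suffices to treat $k=1$, where the numerator equals $\phi\cos2\phi-\tfrac12\sin2\phi$, a function vanishing at $0$ with derivative $-2\phi\sin2\phi<0$. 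Hence $\Delta_{\Hei^n}d_\rho<0$ on $\{r>0\}$.

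To upgrade this to the distributional statement on all of $\Pi_0$ I would repeat the cut-off scheme of Proposition \ref{PROP:half-space} and Theorem \ref{THM:torus}: the only singular set is the half-axis $\{(0,0,t):t>0\}$, of codimension $2n$, so a Euclidean $\epsilon$-tube about it has measure $O(\epsilon^{2n})$; multiplying by cut-offs $\psi_\epsilon$ with $|\nabla\psi_\epsilon|\le c/\epsilon$ and using the boundedness $|\nabla_{\Hei^n}d_\rho|=1$, the error term is $O(\epsilon^{2n-1})\to 0$, giving $\Delta_{\Hei^n}d_\rho\le 0$ in $\mathcal D'(\Pi_0)$ and, by the collapse above, $\Delta_{p,\Hei^n}d_\rho\le 0$ as well; part (ii) is then immediate from Theorem \ref{thmm1}(a). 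For sharpness I would invoke Theorem \ref{thmm1}(b) at a boundary point $\xi_0=(x_0',y_0',0)$ with $(x_0',y_0')\neq 0$, so a neighbourhood avoids the singular axis: hypothesis (i) holds with $c=1$, and hypothesis (ii) follows from the asymptotics $d_\rho=\frac{t}{2r}+o(t)$ of Proposition \ref{prop:drho}, which make $d_\rho$ comparable to the Euclidean distance to $\partial\Pi_0$ near $\xi_0$, so that the integrability and divergence of $\int d_\rho^{-1+\epsilon}$ are exactly as in \cite[Lemma 5.2]{BFT04}. The main obstacle is the implicit differentiation and the sign analysis of the numerator of $\Delta_{\Hei^n}d_\rho$; once the eikonal identity is secured, the reduction to $p=2$ and the approximation across the thin singular axis are routine.
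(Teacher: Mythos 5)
Your proposal is correct and follows essentially the same route as the paper: reduction to $\Pi_0$ by group invariance, the explicit formula $d_\rho=\frac{\phi}{\cos\phi}r$ with implicit differentiation of \eqref{phi10}, the eikonal identity $|\nabla_{\H^n}d_\rho|=1$ collapsing $\Delta_{p,\H^n}$ to $\Delta_{\H^n}$, a cut-off across the singular half-axis, and sharpness via Theorem \ref{thmm1}(b) together with Proposition \ref{prop:drho}. The only (minor, and in fact tidier) deviation is in the pointwise step: you compute $d_r=-\sin\phi$, $d_t=\frac{\cos\phi}{2r}$ first and obtain an exact closed form for $\Delta_{\H^n}d_\rho$ valid for all $n$, whose sign follows from monotonicity in $2n-1$ and a one-line derivative test, whereas the paper first discards the dimension via $\frac{2n-1}{r}d_r\leq\frac{1}{r}d_r$ and then verifies the sign through a longer computation involving $B''$ and $Q''$ and a four-term non-negative decomposition.
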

\proof $\ia$ We first note that by invariance under group action we may assume
that $\Pi=\Pi_0$. 
By Lemma  \ref{thm2} the distance to the boundary of a point
$\xi=(x,y,t)$, $(x,y)\neq (0,0)$, is given by
\[
d_{\rho}(r,t) = \frac{\phi}{\cos\phi} r =: B(\phi)r.
\]
We will see below that $d_{\rho,r}(r,t)\leq 0$. Therefore, we have
\begin{align}
\Delta_{\H^n} d_{\rho}= & \;  d_{\rho,rr} +\frac{2n-1}{r} d_{\rho,r} + 4r^2d_{\rho,tt}  
\label{ol}\\  
\leq & \;  d_{\rho,rr} +\frac{1}{r} d_{\rho,r} + 4r^2d_{\rho,tt}  \nonumber \\  
= & \; B''(\phi) \Big(  r\phi_r^2 +4r^3 \phi_t^2 \Big)   +B'(\phi) \Big(  r\phi_{rr}  +3\phi_r +4r^3 \phi_{tt}\Big) 
+ \frac{1}{r}B(\phi).
\nonumber
\end{align}
The various partial derivatives of $\phi=\phi(r,t)$
are computed from the relation
\be
\la{phi2}
Q(\phi):= \frac{ 2\phi +\sin(2\phi)}{2\cos^2\phi} =\frac{t}{r^2}.
\ee
Differentiating we find
\[
\phi_t =\frac{1}{r^2 Q'(\phi)} \; , \qquad
\phi_{tt} = -\frac{Q''(\phi)}{r^4 \, (Q'(\phi))^3}
\; , \qquad
\phi_r =-\frac{2Q(\phi)}{r\, Q'(\phi)} \; ,
\]
\[
\phi_{rr} = \frac{Q(\phi)}{r^2 (Q'(\phi))^3}
\Big(  6\, (Q'(\phi))^2 -4 Q''(\phi) \, Q(\phi)\Big).
\]
Substituting in \eqref{ol} we arrive at
\be
\la{for}
\Delta_{\H^n} d_{\rho} \leq \frac{1}{r(Q')^3} \bigg(
4(Q^2+1)(B''Q'-Q''B')  +B\, (Q')^3
\bigg).
\ee
Let $A$ denote the term in large brackets in \eqref{for}.
We have
\[
B'(\phi) = \frac{ \cos \phi +\phi \, \sin \phi}{ \cos^2 \phi} \; , \qquad
\quad B''(\phi) =\frac{ \phi+ \phi\sin^2 \phi +\sin(2\phi)}{ \cos^3\phi},
\]
\[
Q'(\phi)= 2\, \frac{\cos \phi+ \phi\, \sin \phi}{\cos^3 \phi}
 \; , \qquad \quad
Q''(\phi)= 2\, \frac{\phi +2\phi \sin^2 \phi +3\sin(2\phi)}{\cos^4 \phi}.
\]
Therefore
\[
d_{\rho,r}(r,t) =B(\phi) +B'(\phi)\phi_r \, r =-\sin\phi \leq 0
\]
as claimed.
Substituting we find after some computations that
\begin{align*}
A= & -\frac{8}{\cos^9 \phi} \cdot \bigg(
(1+ 2 \cos^4 \phi -3 \cos^2 \phi )\phi^3 +
\cos \phi \sin \phi \, (3 -5 \cos^2 \phi )\phi^2 \\
& \qquad\qquad \qquad + \cos^2 \phi \, (3 -4 \cos^2 \phi)\phi+\cos^3 \phi \sin \phi \bigg).
\end{align*}
Now, the term in the large brackets above can be written as
\begin{align*}
& \phi^3 \sin^4\phi + \phi^2 \sin^2\phi \,  \cos\phi \, (3\sin\phi -\phi\cos\phi) \\
& \qquad \quad  +\phi \sin\phi \cos^2\phi \, (3\sin\phi -2\phi \cos\phi)
 +\cos^3 \phi \, (\sin\phi -\phi\cos\phi)
\end{align*}
Each of these four terms is non-negative for 
$\phi\in (0,\pi/2)$; hence $A\leq 0$ and (i) has been
proved.

\

\noindent
(ii) The required Hardy inequality will follow by applying
Theorem \ref{THM:Hardy,gen} provided we establish
that $\Delta_{p,\H^n}d_{\rho} \leq  0$
in the distributional sense in $\Pi$.
Now, by \cite[Theorem 3.1]{MSC01} (or by a direct computation
for $r\neq 0$) we have $|\nabla_{\H^n} d_{\rho}|=1$ a.e.

Hence the above condition can be simplified to
$\Delta_{\H^n}d_{\rho} \leq  0$ which is in particular independent of 
$p>1$; see also \cite{BFT04}.
So the result follows from (i). The sharpness of the constant follows from Theorem \ref{THM:Hardy,gen} (b),  Proposition \ref{prop:drho} and the fact that $|\nabla_{\H^n} d_{\rho}|=1$ a.e.,
cf. \cite{MSC01}.
\endproof

The next proposition provides a more detailed picture
concerning nearest boundary points. We denote $r'^2=|x'|^2+|y'|^2$.

\begin{proposition}
$(1a)$ Any point $\xi=(x,y,t)\in \Pi_0$ with $(x,y)\neq (0,0)$
has a unique nearest boundary
point $(x',y',0)\in \partial\Pi_0$. Moreover $(x',y')$
is different from $(x,y)$ and from $(0,0)$ and
there holds $d_\rho(\xi) < \pi r'/2$.

\noindent
$(1b)$ Let $t>0$. The point $(0,0,t)$ has as nearest boundary
points all points $(x',y',0)$ with $|x'|^2+|y'|^2=2t/\pi$.

\noindent
$(2a)$ Conversely, let $\xi'=(x',y',0)\in \partial\Pi_0$
with $(x',y') \neq (0,0)$ and $\rho>0$ be given. 
If $\rho< \pi r'/2$ then there are exactly two points
$\xi=(x,y,t)\in\Pi_0$ whose nearest boundary point is
$\xi'$ and for which $d_\rho(\xi)=\rho$. Moreover exactly one
of these points lies on the $t$-axis. 
If $\rho\geq  \pi r'/2$ then there is only one such point
and it lies on the $t$-axis.

\noindent
$(2b)$ No point in $\Pi_0$ has $(0,0,0)$ as its
nearest boundary point.
\la{lem:coord2}
\end{proposition}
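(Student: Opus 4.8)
The plan is to read Proposition~\ref{lem:coord2} as an explicit inversion of the nearest--point correspondence built in Lemmas~\ref{lem2} and~\ref{thm2}. Write $r'=|(x',y')|$ throughout and record the two identities I will use repeatedly: if $\xi=(x,y,t)\in\Pi_0$ with $(x,y)\neq0$ has $\xi'$ as nearest boundary point, then by \eqref{tan} one has $x_i'=x_i+y_i\tan\phi$ and $y_i'=y_i-x_i\tan\phi$ with $\phi\in(0,\pi/2)$ fixed by \eqref{phi10}, while squaring these relations and invoking \eqref{kron} gives
\[
r'=\frac{r}{\cos\phi},\qquad d_\rho(\xi)=\frac{\phi}{\cos\phi}\,r=\phi\,r'.
\]
For (1a) the uniqueness of the nearest point is precisely Lemma~\ref{lem2} (unique critical point of $F$) together with Lemma~\ref{thm2} (this critical point realises the distance). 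Since $(x,y)\neq0$ and $\tan\phi>0$, the relations \eqref{tan} force $(x',y')\neq(x,y)$, and $r'=r/\cos\phi>0$ forces $(x',y')\neq(0,0)$; finally $d_\rho(\xi)=\phi r'<\tfrac{\pi}{2}r'$ because $\phi<\pi/2$. Part (1b) is a verbatim restatement of the case $(x,y)=(0,0)$ of Lemma~\ref{thm2}: the value $\sqrt{\pi t/2}$ is attained exactly on the circle $|(x',y')|^2=2t/\pi$.

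The substance of the proposition is the converse (2a), which I would split according to the two mechanisms above. For the off--axis contribution, any $\xi$ with $(x,y)\neq0$, nearest point $\xi'$ and $d_\rho(\xi)=\rho$ must satisfy $\phi r'=\rho$, hence $\phi=\rho/r'$; this lies in $(0,\pi/2)$ exactly when $\rho<\pi r'/2$. In that regime the linear system \eqref{tan} is inverted explicitly by
\[
x_i=\cos\phi\,(\cos\phi\,x_i'-\sin\phi\,y_i'),\qquad
y_i=\cos\phi\,(\sin\phi\,x_i'+\cos\phi\,y_i'),
\]
together with $t=\tfrac{r'^2}{2}(2\phi+\sin2\phi)$ from \eqref{phi10}, producing a single off--axis point; running Lemmas~\ref{lem2}--\ref{thm2} forward on this point confirms that $\xi'$ is indeed its nearest boundary point and that $d_\rho=\rho$. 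The on--axis contribution is controlled by (1b): a point $(0,0,t)$ has $\xi'$ among its nearest points precisely when $r'=\sqrt{2t/\pi}$, so along the $t$--axis $\xi'$ is realised at a single height, and one tracks the off--axis point as $\phi\uparrow\pi/2$, where it degenerates onto $(0,0,\pi r'^2/2)$ with $d_\rho\to\pi r'/2$. Matching these two mechanisms against the prescribed $\rho$ is what produces the threshold $\rho=\pi r'/2$ and the stated dichotomy, with the $t$--axis point accounting for the solution that the off--axis branch cannot supply.

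Part (2b) then follows at once from the same two mechanisms: every off--axis point has a nearest boundary point of radius $r'=r/\cos\phi>0$, and every $t$--axis point has its nearest points on a circle of positive radius $\sqrt{2t/\pi}$; since the origin has radius $0$, no point of $\Pi_0$ can have $(0,0,0)$ as a nearest boundary point.

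The step I expect to be the genuine obstacle is making the count in (2a) \emph{exact} — establishing precisely two solutions below the threshold and precisely one at or above it, with exactly one on the $t$--axis. Two global points must be settled that the purely local calculation of Lemma~\ref{lem2} does not address. First, once $t/r^2$ is large the defining equation \eqref{phi10} acquires a second root $\phi\in(\pi/2,\pi)$, hence a second critical point of $F$; one must verify that this spurious critical point always gives the strictly larger value $\phi r'$ of the distance and therefore never represents a nearest point, so that it contributes no extra solutions. Second, one must show that the off--axis normal curve $\phi\mapsto\xi(\phi)$ and the single admissible $t$--axis point exhaust all solutions and meet cleanly in the limit $\phi\to\pi/2$. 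Controlling this global fibration of the nearest--point map $\xi\mapsto\xi'$ — in particular the behaviour of the perpendicular minimising geodesics emanating from $\xi'$ as the foot approaches the $t$--axis — is where the real work of the proof lies.
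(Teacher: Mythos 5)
Your handling of (1a), (1b), (2b) and of the off-axis branch of (2a) coincides with the paper's: everything is read off from Lemmas~\ref{lem2} and~\ref{thm2}, with $r'=r/\cos\phi$, $d_\rho(\xi)=\phi r'<\pi r'/2$, and the unique off-axis point obtained by inverting \eqref{tan} with $\phi=\rho/r'\in(0,\pi/2)$, which exists precisely when $\rho<\pi r'/2$ (your explicit inversion formulas agree with the paper's ``uniquely determined by \eqref{tan} and \eqref{phi10}''). The genuine gap is the sentence claiming that ``matching these two mechanisms \dots\ produces the threshold $\rho=\pi r'/2$ and the stated dichotomy''. It does not, and your own intermediate results show why: by (1b), the \emph{only} axis point having $\xi'$ among its nearest boundary points is $(0,0,\pi r'^2/2)$, and its distance to the boundary is exactly $\pi r'/2$. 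Hence the count that actually follows from your two mechanisms is: one point (off-axis) when $\rho<\pi r'/2$, one point (on-axis) when $\rho=\pi r'/2$, and none when $\rho>\pi r'/2$. That is not the count asserted in (2a) --- two points, one of them on the axis, for every $\rho<\pi r'/2$, and one point for every $\rho\geq \pi r'/2$ --- and no ``matching'' can bridge the difference. Deferring the exact count to ``the real work'', as your final paragraph does, is deferring precisely the step at which the argument breaks down.

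For comparison, the paper gets the stated count by pairing $\xi'$ with the axis point $(0,0,2\rho^2/\pi)$ --- the unique axis point whose distance to the boundary equals $\rho$ --- and asserting that this point has $\xi'$ as one of its nearest boundary points for \emph{every} $\rho>0$. By (1b), the nearest boundary points of $(0,0,2\rho^2/\pi)$ form the circle of radius $2\rho/\pi$, so that assertion holds only when $r'=2\rho/\pi$, i.e.\ $\rho=\pi r'/2$: the paper's proof of (2a) is in tension with its own part (1b), and your more careful use of (1b) exposes this. So what your proposal uncovers is an inconsistency in the statement itself; but instead of either resolving it or saying plainly that (2a) fails under the literal reading of ``nearest boundary point'', the proposal hides the problem behind the vague matching step, and therefore does not constitute a proof. (Your other concern, that a second root of \eqref{phi10} in $(\pi/2,\pi)$ might create extra critical points of $F$, is not where the difficulty lies: uniqueness of the minimizing boundary point is exactly what Lemmas~\ref{lem2} and~\ref{thm2} supply, and both your proposal and the paper's proof of this proposition take those lemmas as given inputs.)
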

\proof (1a) We have already seen in the proof
of Theorem \ref{thm2} that given $\xi\in\Pi_0$ the nearest 
boundary point $\xi'=(x',y',0) \in\partial\Pi_0$ is uniquely defined. Moreover, setting $r'^2 =x'^2 +y'^2$ we have from
\eqref{tan} that
\[
r'^2 =(1+\tan^2\phi)\, r^2 =\frac{r^2}{\cos^2\phi}
\]
where $\phi\in (0,\pi/2)$ is given by \eqref{phi10}.
Hence
\be
d_{\rho}(\xi) =\frac{\phi}{\cos\phi} r =\phi r' <\frac{\pi r'}{2}.
\la{we1}
\ee

\noindent (1b) This is contained in Theorem \ref{thm2}.

\noindent (2a)
Assume that $\xi=(x,y,t)\in \Pi_0$ with $(x,y)\neq (x',y')$
has $\xi'$
as its nearest boundary point and that $d_{\rho}(\xi)=\rho$.
By \eqref{we1} $\phi =\rho/r'$ and the point $\xi$ is 
now uniquely determined by \eqref{tan} and \eqref{phi10}.
It is now easy to
see that this point $\xi$
has indeed $\xi'$ as its nearest boundary point and
$d_{\rho}(\xi)=\rho$.

Moreover, by Theorem \ref{thm2}, the point
$(0,0,2\rho^2/\pi)$ also has $(x',y',0)$
as one of its nearest 
boundary points and its distance to the boundary is $\rho$.
In case $\rho\geq \pi r'/2$ the first of these two points is
not defined. Hence (2a) has been proved. 

\noindent (2b) We note that in case (1a) the nearest boundary
point $(x',y',0)$ is not the origin since $(0,0)$
is not a critical point of the function $F$ in Lemma 
\ref{lem2}. In case (1b) the nearest boundary points
are also different from the origin.

\endproof

\begin{lemma}
\label{lem:hardy_polytope}
Let $\Omega\subset\Hei^n$ be a bounded convex
polytope. The corresponding Carnot-Carath\'{e}odory
distance to the  boundary satisfies
$\Delta_{\H^n} d_\rho \leq 0 $ in the distributional
sense in $\Omega$.
\end{lemma}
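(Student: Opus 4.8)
The plan is to transcribe the proof of Theorem \ref{THM:Hardy,N,Omega}(i), replacing the gauge pseudodistance by the Carnot--Carath\'eodory distance and using Theorem \ref{THM:Hardy_halfsp}(i) as the half-space building block. As there, everything reduces to showing that for a fixed non-negative $\phi\in\cic(\Omega)$ there holds $\int_{\Omega}\nabla_H d_\rho\cdot\nabla_H\phi\,{\rm d}\xi\geq 0$.

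First I would set up the same geometric decomposition. Write $E_1,\ldots,E_m$ for the faces of $\Omega$, let $\Pi_k$ be the half-space with $E_k\subset\partial\Pi_k$ and $\Omega=\bigcap_k\Pi_k$, and put $d_k={\rm dist}_\rho(\cdot,\partial\Pi_k)$. Using the convexity of $\Omega$ together with Lemma \ref{thm2}, I would argue that the nearest boundary point of any $\xi\in\Omega$ lies on one of the faces, so that $d_\rho=\min_{1\leq k\leq m}d_k$ and the sets $A_k=\{\xi\in\Omega:d_\rho(\xi)=d_k(\xi)\}$ have pairwise disjoint interiors covering $\Omega$. By Lemma \ref{thm2} each $d_k$ fails to be $C^1$ only along the halfline $L_k$ which is the image of the positive $t$-axis $\{(0,0,t):t>0\}$ under the group action mapping $\Pi_0$ onto $\Pi_k$ (the locus where the nearest boundary point is realized on a whole circle rather than a single point). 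Setting $K=\bigcup_k L_k$, the distance $d_\rho$ is then $C^1$ on $\Omega\setminus K$.

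Next I would run the cut-off argument of Theorem \ref{THM:Hardy,N,Omega}. Introduce smooth $\psi_\epsilon$ with $0\leq\psi_\epsilon\leq 1$, vanishing near $K$, equal to $1$ away from $K$, and with $|\nabla\psi_\epsilon|\leq c/\epsilon$; put $\phi_\epsilon=\phi\psi_\epsilon$. Integrating by parts on each $A_k$, the interior volume term $-\int_{A_k}\phi_\epsilon\,\Delta_{\H^n}d_k\,{\rm d}\xi$ has the favourable sign because $\Delta_{\H^n}d_k\leq 0$ on $A_k\setminus K$ by Theorem \ref{THM:Hardy_halfsp}(i). The surface contributions on the outer parts of $\partial A_k$ vanish since $\phi$ is compactly supported, and on a shared interface $S_{kj}=\partial A_k\cap\partial A_j$ the two contributions combine into $\int_{S_{kj}}\phi_\epsilon\,(\nabla_H d_k-\nabla_H d_j)\cdot\nu_H\,dS$. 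Since $d_\rho=\min(d_k,d_j)$ near $S_{kj}$, this interface is a level set of $d_k-d_j$, so $\nabla(d_k-d_j)=\lambda\nu$ with $\lambda\geq 0$ (the normal $\nu$ points from $A_k$ into $A_j$ and $d_k-d_j$ increases across it). The same algebraic identity as in Theorem \ref{THM:Hardy,N,Omega}, using the expression \eqref{nu} for $\nu_H$, shows the integrand equals $\lambda\big((\nu_x+2\nu_t y)^2+(\nu_y-2\nu_t x)^2\big)\geq 0$. Summing over $k$ gives $\int_{\Omega}\nabla_H d_\rho\cdot\nabla_H\phi_\epsilon\,{\rm d}\xi\geq 0$; letting $\epsilon\to 0$ and using $|\nabla_{\H^n}d_\rho|=1$ a.e.\ (cf.\ \cite{MSC01}) to control the cut-off error then yields $\Delta_{\H^n}d_\rho\leq 0$ in the distributional sense.

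I expect the main obstacle to be the two geometric inputs rather than the analysis: first, justifying the decomposition $d_\rho=\min_k d_k$ with nearest points actually on the faces, which is where the genuinely sub-Riemannian features of the Carnot--Carath\'eodory distance, in particular the non-unique nearest points documented in Proposition \ref{lem:coord2}, could create difficulties; and second, correctly identifying the singular set $K$ and checking it is excisable by the cut-offs. Once these are settled, the integration-by-parts and gluing steps are a direct transcription of the gauge-distance argument, with $|\nabla_{\H^n}d_\rho|=1$ a.e.\ making the limit $\epsilon\to 0$ clean.
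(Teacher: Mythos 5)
Your proposal is correct and coincides with the paper's own proof: the paper proves Lemma \ref{lem:hardy_polytope} precisely by repeating the argument of Theorem \ref{THM:Hardy,N,Omega}, with Lemma \ref{thm2} supplying the half-space distance formula (and its singular set, the image of the $t$-axis) and Theorem \ref{THM:Hardy_halfsp}(i) supplying the superharmonicity on each half-space $\Pi_k$. Your decomposition $d_\rho=\min_k d_k$, the excision of $K=\bigcup_k L_k$, the interface sign argument via $\nabla(d_k-d_j)=\lambda\nu$, $\lambda\geq 0$, and the passage $\epsilon\to 0$ are exactly the steps the paper invokes.
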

\begin{proof}
    The proof follows exactly the lines of the proof of Theorem \ref{THM:Hardy,N,Omega} using Lemma \ref{thm2} and Theorem \ref{THM:Hardy_halfsp} instead.
\end{proof}

\begin{theorem}
\label{thm:hardy_convex}
Let $\Omega\subset\Hei^n$ be bounded and convex
and let $d_\rho(\xi)$, $\xi \in \Omega$,
denote the corresponding Carnot-Carath\'{e}odory
distance to the  boundary. Then 
for any $p>1$ the Hardy inequality
\[
 \int_{\Omega} |\nabla_{\H^n} u|^p {\rm d}\xi \geq 
 \Big(\frac{p-1}{p}\Big)^p
\int_{\Omega} \frac{|u|^p}{d_{\rho}^{p}} {\rm d}\xi  \; ,  \quad u\in \cic(\Omega),
\]
is valid.
\end{theorem}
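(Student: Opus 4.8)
The plan is to reduce the convex-domain inequality to the polytope case already handled in Lemma~\ref{lem:hardy_polytope}, by exhausting $\Omega$ from inside by convex polytopes. Fix $u\in\cic(\Omega)$. Since $\Omega$ is open and convex, the convex hull $K$ of ${\rm supp}\,u$ is a compact convex subset of $\Omega$, and a standard circumscription of a compact convex body by a polytope produces a bounded convex polytope $P$ with $K\subset P$ and $\overline{P}\subset\Omega$; in particular ${\rm supp}\,u\subset P$. The whole argument then hinges on comparing the Carnot--Carath\'eodory distance to $\partial P$, call it $d_{\rho,P}$, with the distance $d_{\rho,\Omega}$ to $\partial\Omega$ on the support of $u$.

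On the polytope $P$ I would first record the Hardy inequality. By Lemma~\ref{lem:hardy_polytope} we have $\Delta_{\H^n}d_{\rho,P}\le 0$ in the distributional sense in $P$. Because $|\nabla_{\H^n}d_{\rho,P}|=1$ a.e.\ by \cite{MSC01}, the fields $|\nabla_{\H^n}d_{\rho,P}|^{p-2}\nabla_{\H^n}d_{\rho,P}$ and $\nabla_{\H^n}d_{\rho,P}$ coincide a.e., so $\Delta_{p,\H^n}d_{\rho,P}=\Delta_{\H^n}d_{\rho,P}\le0$ distributionally, exactly as in the proof of Theorem~\ref{THM:Hardy_halfsp}(ii). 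Theorem~\ref{thmm1}(a) then gives
\[
\int_{P}|\nabla_{\H^n}u|^p\,{\rm d}\xi\ge\Big(\frac{p-1}{p}\Big)^p\int_{P}\frac{|u|^p}{d_{\rho,P}^{\,p}}\,{\rm d}\xi,
\]
where I have again used $|\nabla_{\H^n}d_{\rho,P}|=1$ a.e.\ to drop the gradient factor.

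The second ingredient is the monotonicity $d_{\rho,P}\le d_{\rho,\Omega}$ on $P$. Since $\overline{P}\subset\Omega$, for $\xi\in P$ and $\varepsilon>0$ any horizontal curve joining $\xi$ to a nearly nearest boundary point $\xi^{*}\in\partial\Omega$ of length $<d_{\rho,\Omega}(\xi)+\varepsilon$ must cross $\partial P$ at some point $\eta$; its initial arc shows $d_{\rho,P}(\xi)\le\rho(\xi,\eta)<d_{\rho,\Omega}(\xi)+\varepsilon$, and letting $\varepsilon\to0$ gives the claim. Hence $d_{\rho,P}^{-p}\ge d_{\rho,\Omega}^{-p}$ on $P$, and since ${\rm supp}\,u\subset P$,
\[
\int_{\Omega}|\nabla_{\H^n}u|^p\,{\rm d}\xi=\int_{P}|\nabla_{\H^n}u|^p\,{\rm d}\xi\ge\Big(\frac{p-1}{p}\Big)^p\int_{P}\frac{|u|^p}{d_{\rho,P}^{\,p}}\,{\rm d}\xi\ge\Big(\frac{p-1}{p}\Big)^p\int_{\Omega}\frac{|u|^p}{d_{\rho,\Omega}^{\,p}}\,{\rm d}\xi,
\]
which is the desired inequality.

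I expect the main obstacle to be geometric rather than analytic: justifying both the inner exhaustion of a bounded convex domain by polytopes with $\overline{P}\subset\Omega$ and the set-monotonicity of the Carnot--Carath\'eodory distance to the boundary. The latter relies on $\H^n$ being a geodesic (length) space, so that an initial arc of a near-minimizing horizontal curve from an interior point to the boundary of the larger set realizes a no-larger distance to the boundary of the smaller set. Everything else --- the passage from $\Delta_{p,\H^n}$ to $\Delta_{\H^n}$ and the application of Theorem~\ref{thmm1} together with the polytope Lemma --- is a direct reuse of results already established.
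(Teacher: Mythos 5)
Your proposal is correct and follows essentially the same route as the paper: reduce to a bounded convex polytope $P$ containing $\mathrm{supp}\,u$ and compactly contained in $\Omega$, invoke Lemma~\ref{lem:hardy_polytope} together with $|\nabla_{\H^n}d_\rho|=1$ a.e.\ to apply Theorem~\ref{thmm1}(a) on $P$, and conclude via the monotonicity $d_{\rho,P}\le d_{\rho,\Omega}$. The only differences are that you spell out details the paper leaves implicit (the circumscribing polytope construction, which should give $\mathrm{supp}\,u$ in the \emph{interior} of $P$ so that $u\in\cic(P)$, and the curve-crossing argument for the distance monotonicity noted in Remark~\ref{rem:d<d'}).
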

\proof Let  $u\in\cic(\Omega)$ be fixed. 
We consider a bounded convex polytope $\Omega'$ 
such that
\[
 {\rm supp}(u) \subset\subset \Omega' \subset\subset \Omega.
\]
Let us denote  by $d_{\rho}'$ the Carnot-Carath\'{e}odory
distance to $\partial\Omega'$. 
Combining Theorem \ref{THM:Hardy,gen} and Lemma \ref{lem:hardy_polytope}
we obtain that
\[
 \int_{\Omega'} |\nabla_{\H^n} u|^p {\rm d}\xi \geq 
 \Big(\frac{p-1}{p}\Big)^p
\int_{\Omega'} \frac{|u|^p}{d_{\rho}'^p} {\rm d}\xi  \; .
\]
Hence, since $d_{\rho}'\leq d_{\rho}$ in $\Omega'$,
\[
\int_{\Omega} |\nabla_{\H^n} u|^p {\rm d}\xi 
=  \int_{\Omega'} |\nabla_{\H^n} u|^p {\rm d}\xi 
\geq  \Big(\frac{p-1}{p}\Big)^p
\int_{\Omega'} \frac{|u|^p}{d_{\rho}'^p} {\rm d}\xi
\geq  \Big(\frac{p-1}{p}\Big)^p
\int_{\Omega} \frac{|u|^p}{d_{\rho}^p} {\rm d}\xi
\]
and the proof is complete. \endproof

\begin{remark}\label{rem:d<d'}
We point that the fact that the key property that
$d_{\rho}'\leq d_{\rho}$,
for $d$ and $d'$ as in the proof of Theorem \ref{thm:hardy_convex} in the bounded convex polytope $\Omega'$ and reflects the geometric nature of the Carnot-Carath\'eodory distance, meaning in particular that that the latter is a ``true'' distance respecting the geometry of $\H^n$. 
\end{remark}

The proof of the following corollary, which is the geometric uncertainty principle with respect to the Carnot-Carath\'eodory distance, follows the lines of Corollary \ref{corollary} in the case of the gauge pseudodistance.

\begin{corollary}
Let $D\subset\Hei^n$ be either a bounded convex domain or an arbitrary half-space in $\H^n$
and let 
\[
d_{\rho}(\xi)={\rm dist}_{\rho}(\xi,\partial D)
\]
denote
the corresponding Carnot-Carath\'eodory  distance of $\xi \in D$ to the boundary $\partial D$. Then for any $u \in C_{c}^{\infty}(D)$ we have
\[
\left(\int_{D} |\nabla_{\H^n}u|^2 {\rm d}\xi \right)^{\frac{1}{2}}\left(\int_{D} d_{\rho}^{2}u^2
{\rm d}\xi\right)^{\frac{1}{2}}\geq \frac{1}{2} \int_{D} u^2 {\rm d}\xi\,.
\]
\end{corollary}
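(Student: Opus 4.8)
The plan is to deduce this uncertainty-type inequality as a direct corollary of the sharp $L^2$-Hardy inequality for the Carnot-Carath\'eodory distance, combined with the Cauchy-Schwarz inequality, exactly along the lines of Corollary \ref{corollary} for the gauge pseudodistance. The only genuine analytic ingredient is the Hardy inequality itself at $p=2$; once that is in hand, what remains is an elementary algebraic manipulation.

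First I would record the relevant Hardy inequality for $D$. If $D$ is a bounded convex domain this is Theorem \ref{thm:hardy_convex} with $p=2$, while if $D$ is a half-space it is Theorem \ref{THM:Hardy_halfsp}(ii) with $p=2$; in either case, since $(p-1)/p=1/2$, we obtain
\[
\int_{D} |\nabla_{\H^n} u|^2 \, {\rm d}\xi \geq \frac14 \int_{D} \frac{u^2}{d_{\rho}^2}\,{\rm d}\xi, \qquad u\in\cic(D).
\]
Note that, in contrast with the gauge case treated in Theorem \ref{THM:Hardy,N,Omega} and Corollary \ref{corollary}, no factor $|\nabla_{\H^n} d_\rho|^2$ appears on the right-hand side, because $|\nabla_{\H^n} d_\rho|=1$ a.e. by \cite{MSC01}; this makes the Carnot-Carath\'eodory version slightly cleaner.

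Next I would multiply the above inequality through by the nonnegative quantity $\int_D d_\rho^2 u^2\,{\rm d}\xi$ and apply the Cauchy-Schwarz inequality to the $L^2$-pairing of $|u|/d_\rho$ against $d_\rho|u|$, which gives
\[
\left(\int_D \frac{u^2}{d_\rho^2}\,{\rm d}\xi\right)\left(\int_D d_\rho^2 u^2\,{\rm d}\xi\right) \geq \left(\int_D u^2\,{\rm d}\xi\right)^2.
\]
Combining the two displays yields
\[
\left(\int_D |\nabla_{\H^n} u|^2\,{\rm d}\xi\right)\left(\int_D d_\rho^2 u^2\,{\rm d}\xi\right) \geq \frac14 \left(\int_D u^2\,{\rm d}\xi\right)^2,
\]
and taking square roots produces the asserted inequality.

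I do not expect any real obstacle: all the difficulty has already been absorbed into the proof of the underlying Hardy inequality (the distributional superharmonicity of $d_\rho$ established via Lemma \ref{thm2}, Theorem \ref{THM:Hardy_halfsp}, and the polytope-approximation argument of Theorem \ref{thm:hardy_convex}). The only point deserving a word of care is finiteness of the two integral factors for $u\in\cic(D)$; this is immediate, since $\operatorname{supp}(u)$ is a compact subset of the open set $D$, so $d_\rho$ is both bounded above and bounded away from zero on $\operatorname{supp}(u)$ by continuity of the CC-Lipschitz function $d_\rho$, whence $\int_D d_\rho^2 u^2$ and $\int_D u^2/d_\rho^2$ are finite and the Cauchy-Schwarz step is justified.
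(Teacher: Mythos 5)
Your proposal is correct and follows exactly the paper's own route: the paper proves this corollary by combining the $p=2$ case of the Carnot-Carath\'eodory Hardy inequality (Theorem \ref{thm:hardy_convex} for bounded convex domains, Theorem \ref{THM:Hardy_halfsp} for half-spaces) with the Cauchy-Schwarz inequality, just as in the gauge-distance case of Corollary \ref{corollary}. Your additional remarks on the absence of the factor $|\nabla_{\H^n} d_\rho|$ and on the finiteness of the integral factors are accurate but not needed beyond what the paper records.
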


\section{Hardy inequalities on stratified groups of step two}
\label{sec:convexity}

In this section we consider stratified groups of step two. If $G\equiv\R^n$ is such a group with (cf. \eqref{def.carnot})
$\text{dim}(V_1)=m<n$, then each element $g\in G$
can be written as
\[
g= (g^{(1)},g^{(2)})=
(g_1,\cdots,g_m,g_{m+1},\cdots,g_n),
\]
where $g^{(1)} \in \R^{m}$ and $g^{(2)}\in \R^{n-m}$ 
belong in the first and the second stratum of $G$, 
respectively. It is known
that the group law has the form
     \begin{equation}
     \label{g.law1}
        (g'g)_i =\left\{
 \begin{array}{ll}
g_i+g'_i\,, & i=1,\cdots,m,\\
g_i+g'_i+ \frac12 \langle B^{(i)}g'^{(1)},g^{(1)} \rangle\, ,
& i=m+1,\cdots,n ,
 \end{array}
 \right.\, \end{equation}
  where the $B^{(i)}$'s are $m \times m$ matrices, and $\langle \cdot, \cdot \rangle$ stands for
  the standard inner product in $\R^m$, see e.g. \cite[Remark 17.3.1]{BLU07}. The group law \eqref{g.law1} can also be written as 
  \begin{equation}\label{g.law2}
    g'g=(g^{(1)}+g'^{(1)}, g^{(2)}+g'^{(2)}+
    \frac12 \langle Bg'^{(1)}, g^{(1)}\rangle)\,,  
  \end{equation}
  where   $\langle Bg^{(1)},g'^{(1)} \rangle$ denotes the $(n-m)$-tuple 
  \[
  (\langle B^{(m+1)}g^{(1)},g'^{(1)} \rangle, \cdots, \langle B^{(n)}g^{(1)},g'^{(1)} \rangle)\,.
  \]
The inverse element is then given by
\[
 (g^{(1)},g^{(2)})^{-1}=(-g^{(1)},-g^{(2)}+
  \frac12 \langle Bg^{(1)},g^{(1)} \rangle)\, .
\]

We note that the (anisotropic) dilations on a stratified group $G$ of step two are given
by the maps $\delta_\lambda$, $\lambda >0$, defined by
  \[
  \delta_\lambda((g^{(1)},g^{(2)}))=(\lambda g^{(1)}, \lambda^2 g^{(2)}) .
  \]
 
In the first part of this section we prove some results on the concavity, in the 
sense of \cite{DGN03, LMS03}, of the Euclidean distance to the boundary on a 
convex set $\Omega \subset G$. This, combined with results for \cite{DGN03}, 
yields Theorem C of the introduction and in particular
the $L^2$ Hardy inequality on convex domains $\Omega\subset G$.

\subsection{On the distance function from the boundary of bounded convex domains in
stratified groups}\label{sec:Hg}

To develop the subsequent analysis we first need to clarify the notions of convexity of sets and functions in the stratified setting.
Even though, as mentioned above, these notions were introduced at the same time in \cite{DGN03} and in \cite{LMS03}, here we adopt the notation of \cite{DGN03} since in \cite{LMS03} these notions are developed in the viscosity sense, while for us the weak sense is more suitable. To this end let us first introduce the following auxiliary notion. 

Let $G$ be a stratified group with $\text{dim}(G)=n$ and $\text{dim}(V_1)=m$. Given a point $g \in G$ the horizontal plane $H_{g}$ passing through $g$ is defined by
 \[
 H_{g}=L_{g} (\exp (V_1 \oplus \{0\}))\,,
 \]
where $L_{g}$ denotes the left translation by
$g\in G$ and $\exp: \mathfrak{g} \rightarrow  G$
is the exponential map for the group $G$. In particular, we have that 
 \[
 H_{e}= \exp (V_1 \oplus \{0\})\,,
 \]
where $e \in G$ is the identity element of  $G$.

Following \cite{DGN03}, for
given $g,g' \in G$ and $\lambda \in [0,1]$
we denote by $g_\lambda$  the anisotropic analogue of the standard Euclidean convex combination, that is
\[
  g_{\lambda}=g_{\lambda}(g;g'):=g \delta_{\lambda}(g^{-1}g').
\]

The following definition was given in \cite[Definition 5.5]{DGN03}. 
\begin{definition}
\label{def:H-convex.u}
     A function $u: G \rightarrow (-\infty,\infty]$ is called weakly $H$-convex if $\{g \in G : u(g)=\infty\} \neq G$, and if for every $g \in G$
and $g' \in H_g$ one has
    \[
    u(g_{\lambda})\leq u(g)+\lambda(u(g')-u(g))\,,
    \qquad  \lambda \in [0,1].
    \]
\end{definition}
The notion of a weakly $H$-concave function can be defined accordingly.

In \cite[Definition 7.1]{DGN03} the authors introduced the following definition of convexity of sets in the stratified setting. 

\begin{definition}
A subset $\Omega\subset G$ of a stratified group $G$ is called weakly $H$-convex if for any $g \in \Omega$ and for any $g' \in \Omega \cap H_g$ one has $g_\lambda \in \Omega$ for every $\lambda \in [0,1].$ 
\end{definition}
  
\begin{remark}
It is easy to prove that if $\Omega \subset  \R^n$ is convex in the Euclidean sense then $\Omega$ is a weakly $H$-convex set in a
stratified group $G \equiv \R^n$ of step two. 
To see this we first observe that
by the identification
   \[
    \exp(g_1X_1+\cdots+g_nX_n)=(g_1,\cdots,g_n)\,,
    \]
between $G$ and the corresponding Lie algebra
$\mathfrak{g}$ via the exponential map, we have
$g \in H_e$ if and only if $g$ is of the 
form $g=(g_1, \cdots,g_m,0,\cdots,0)$.
Since $H_g=L_gH_e$, we obtain from \eqref{g.law2}
that $g'\in H_g$ if and only if $g'$ is of the form
  \begin{eqnarray}\label{g'}
  g'=  (g'^{(1)},g'^{(2)}) = (g^{(1)}+v^{(1)}, g^{(2)}+ \frac12 \langle B g^{(1)},v^{(1)}\rangle)
  \end{eqnarray} 
for some $v^{(1)}\in\R^m$.
Suppose now that
$g\in \Omega$ and let $g'\in \Omega \cap H_g$. Then
$g^{-1}g' \in H_e$, which in turn implies that
    \[
    \delta_\lambda(g^{-1}g')=(\lambda(g'^{(1)}-g^{(1)}),0)\,.
    \]
So
\begin{eqnarray*}
         g_\lambda & = & (g^{(1)}+\lambda (g'^{(1)}-g^{(1)}),g^{(2)}+\frac12 \langle Bg^{(1)}, \lambda(g'^{(1)}-g^{(1)})) \\
         & = & (g^{(1)}+\lambda (g'^{(1)}-g^{(1)}),g^{(2)}+\frac12 \lambda \langle Bg^{(1)}, v^{(1)}\rangle \, )
\end{eqnarray*}
    since by \eqref{g'} we have $g'^{(1)}=g^{(1)}+v^{(1)}$, for some $v \in H_e$. Using \eqref{g'} we
    conclude that
\be
 g_\lambda= (1-\lambda)g + \lambda g' \, .
\la{kak}
\ee
Hence if $\Omega$ is convex (in the Euclidean sense) it is also
weakly $H$-convex.
\end{remark}

In the following example we
show that the distance to a hyperplane
with respect to the quasi-norm \eqref{gauge_norm} is not
weakly $H$-concave.
\begin{example}
Let $\mathbb{H}^n$ be the
Heisenberg group and
$\Pi_0=\{(x,y,t)\in\Hei^n : t>0\}$.
We shall prove that the the distance $d_N$
(cf. \eqref{ntor})
is not weakly $H$-concave.

\     
Actually, we shall show that the weak $H$-concavity fails in a neighbourhood
of any boundary point.
Indeed, let $\xi=(x,y,t) \in \Pi_0$,  and for fixed $\alpha>0$ 
let $\xi'=(x',y',t)=(ax,ay,t).$  Then
$ \xi^{-1}\xi'=(x'-x,y'-y,0) \in H_e$,
hence $\xi' \in H_{\xi}$.
Given $\lambda\in (0,1)$ we have by \eqref{kak}
\[
\xi_{\lambda} =  
\big(  \lambda x' +(1-\lambda) x , \; 
 \lambda  y' +(1-\lambda) y , \; t\big).
\]
We use cylindrical coordinates (cf. Section \ref{Sec3})
and write
\[
\xi=(r,\omega,t) , \quad \xi'=(r',\omega,t), \quad
\xi_{\lambda}=(r_{\lambda},\omega,t)
\]
Assume now for contradiction that $d_N$ is
weakly $H$-convex. Then
\[
d_N(r_{\lambda},t) \geq 
(1-\lambda)d_N(r,t) +\lambda d_N(r',t).
\]
Using the asymptotics of Proposition \ref{propp3} we then have
\[
\frac{t}{2r_{\lambda}} \geq 
(1-\lambda)\frac{t}{2r} +\lambda \frac{t}{2r'} +
O(t^3) , \qquad \mbox{ as }t\to 0+.
\]
Hence
\[
\frac{1}{r_{\lambda}} \geq 
(1-\lambda)\frac{1}{r} +\lambda \frac{1}{r'} \; ,
\]
which contradicts the strict convexity of the function
$1/r$.
We
note that the above argument can be implemented
in a small neighbourood of any boundary point
$\xi_0\in\partial\Pi_0$.

    \end{example}

   \subsection{Hardy inequalities with respect to the Euclidean distance}
In this section we prove that the Euclidean distance to the boundary on
a convex, bounded domain $\Omega$ is weakly $H$-concave and superharmonic.
This provides a proof of the $L^p$-Hardy inequality
for such domains.

\begin{theorem}\label{THM:s.concave2}
   Let $G$ be a stratified group of step two and let  $\Omega \subset G$ be a convex, in the Euclidean sense, bounded  domain in $G$.
   Then the Euclidean distance to the boundary
  is a weakly $H$-concave function on $\Omega$.
\end{theorem}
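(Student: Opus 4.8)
The plan is to reduce the claimed weak $H$-concavity to the classical concavity of the Euclidean distance function on convex sets, exploiting the algebraic identity already recorded in \eqref{kak}. By Definition \ref{def:H-convex.u} applied to $-d$, weak $H$-concavity of $d$ amounts to showing that for every $g\in\Omega$, every $g'\in\Omega\cap H_g$ and every $\lambda\in[0,1]$ one has
\[
d(g_\lambda)\geq (1-\lambda)d(g)+\lambda d(g').
\]
First I would invoke the computation from the Remark that yields \eqref{kak}: since $G$ has step two and $g'\in H_g$, the anisotropic convex combination $g_\lambda=g\,\delta_\lambda(g^{-1}g')$ collapses to the ordinary Euclidean segment, namely $g_\lambda=(1-\lambda)g+\lambda g'$. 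Because $\Omega$ is convex in the Euclidean sense and $g,g'\in\Omega$, this point lies in $\Omega$, so the left-hand side above is well defined. Thus the horizontal constraint $g'\in H_g$ is exactly what linearizes the intrinsic combination, and the step-two hypothesis enters here in an essential way.

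The second step is to record that $d$ is concave on $\Omega$ in the Euclidean sense. I would argue as follows: given $g_0,g_1\in\Omega$ with $r_i:=d(g_i)$, the open Euclidean balls $B(g_i,r_i)$ are contained in $\Omega$; using the Minkowski-sum identity
\[
(1-\lambda)B(g_0,r_0)+\lambda B(g_1,r_1)=B\big((1-\lambda)g_0+\lambda g_1,\,(1-\lambda)r_0+\lambda r_1\big),
\]
together with the convexity of $\Omega$ (which forces the left-hand set into $\Omega$), one obtains $B\big((1-\lambda)g_0+\lambda g_1,\,(1-\lambda)r_0+\lambda r_1\big)\subset\Omega$ and hence $d((1-\lambda)g_0+\lambda g_1)\geq (1-\lambda)r_0+\lambda r_1$. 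This is precisely the Euclidean concavity of $d$.

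Combining the two steps finishes the argument: applying the concavity inequality of the second step with $g_0=g$, $g_1=g'$ and using the identity $g_\lambda=(1-\lambda)g+\lambda g'$ of the first step gives
\[
d(g_\lambda)=d\big((1-\lambda)g+\lambda g'\big)\geq (1-\lambda)d(g)+\lambda d(g'),
\]
which is exactly the required weak $H$-concavity. I do not expect a genuine obstacle: once \eqref{kak} is available, the whole statement rests on the elementary fact that the distance to the boundary of a convex body is Euclidean-concave. The only point deserving care is that the reduction is valid solely because $g'$ is taken in the horizontal plane $H_g$ and $G$ has step two; for general pairs of points, or in higher step, $g_\lambda$ would no longer coincide with the Euclidean convex combination and this direct argument would break down.
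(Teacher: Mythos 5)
Your proposal is correct and follows essentially the same route as the paper: both use the identity \eqref{kak} to collapse $g_\lambda$ to the Euclidean convex combination and then reduce to the classical concavity of the Euclidean distance on a convex set. The only difference is packaging — you invoke the Minkowski-sum identity for balls abstractly, whereas the paper verifies the same inclusion $B_{r_\lambda}(g_\lambda)\subset\Omega$ by hand, decomposing each point $h=g_\lambda+\rho v$ as a convex combination of explicit points in $B_{d(g)}(g)$ and $B_{d(g')}(g')$.
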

\begin{proof}
Let $\Omega$ be as in the hypothesis and let $g,g' \in \Omega$, with $g' \in H_g$. We want to show that for any $\lambda \in [0,1]$ we have 
    \begin{equation}
        \label{conc.d1}
        d(g \delta_{\lambda}(g^{-1}g'))\geq (1-\lambda)d(g)+\lambda d(g')\,.
    \end{equation}
Notice that showing $B_{r_{\lambda}}(g_\lambda) \subset \Omega$, where $B_{r_{\lambda}}(g_\lambda)$ is the Euclidean ball of radius $r_\lambda=(1-\lambda)
d(g)+\lambda d(g')$ centered at $g_\lambda$, we would have the desired inequality \eqref{conc.d1}. Let  $h \in B_{r_{\lambda}}(g_\lambda)$. Then $|h-g_\lambda|=\rho\leq r_\lambda$. We define
\[
v=\frac{h-g_\lambda}{\rho}\,  , \qquad
g_1=g+\rho_1 v  \,  , \qquad 
g'_1=g'+\rho_2 v,
\]
where 
    \[
    \rho_1:=\frac{d(g)}{(1-\lambda)d(g)+\lambda d(g')}\rho\,, \quad \text{and}\quad \rho_2:=\frac{d(g')}{(1-\lambda)d(g)+\lambda d(g')}\rho\,.
    \]
Then $g_1 \in B_{d(g)}(g)\subset \Omega$
and $g'_1 \in B_{d(g')}(g')\subset \Omega$,
since $|v|=1$, $\rho_1\leq d(g)$, and $\rho_2 \leq d(g')$. Recalling also \eqref{kak} we then have   
    \begin{eqnarray*}
        (1-\lambda)g_1+\lambda g'_1& = & (1-\lambda)(g+\rho_1 v)+\lambda(g'+\rho_2v)\\
        & = & (1-\lambda)g+\lambda g'+ (1-\lambda) \rho_1 v +\lambda \rho_2 v\\
        & = & g_\lambda+\rho v \\
        & = & h\,,
    \end{eqnarray*}
 where the last inequality follows by the choice of $v$. Hence, by the Euclidean convexity of $\Omega$, we have $h \in \Omega$, and the proof is complete.
\end{proof}

From Theorem
  \ref{THM:s.concave2} we 
obtain the following result;
the sharpness of the constant follows under the 
hypotheses of part (b) of Theorem
\ref{THM:Hardy,gen}.

\begin{theorem}
Let $G$ be a stratified group of step two
and let $\Omega\subset G$ be a bounded domain which is 
convex in the Euclidean sense. Then for any $p>1$ we have
\begin{align*}
{\rm (i)} & \quad  \Delta_{p,H} d \leq 0 \; \mbox{ in the distributional
sense in }\Omega ;\\
{\rm (ii)} & \quad \mbox{The Hardy inequality} \\[0.2cm]
& \hspace{1.5cm}\int_{\Omega} |\nabla_{H} u|^p {\rm d}g \geq 
\Big( \frac{p-1}{p}\Big)^p
\int_{\Omega} \frac{|\nabla_{H} 
d|^p}{d^p}|u|^p {\rm d}g \; , 
\quad u\in \cic(\Omega), \\[0.2cm]
& \quad \mbox{is valid.}
\end{align*}
\end{theorem}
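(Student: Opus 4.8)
The plan is to deduce both assertions from the weak $H$-concavity of $d$ already established in Theorem \ref{THM:s.concave2}, reducing the Hardy inequality of part (ii) to the distributional differential inequality of part (i) via the general machinery of Theorem \ref{thmm1}. Concretely, for $p=2$ the horizontal $p$-Laplacian reduces to $\Delta_{2,H}d=\Delta_H d$, so once (i) is in hand part (ii) follows at once by applying Theorem \ref{thmm1}(a) with $p=2$, which produces exactly the constant $(1/2)^2=1/4$. Thus the whole proof hinges on establishing (i).

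To obtain (i), I would first recall from Theorem \ref{THM:s.concave2} that, on the Euclidean-convex bounded domain $\Omega$, the Euclidean distance to the boundary $d$ is weakly $H$-concave. The key step is then to convert this geometric concavity into the distributional sign of the horizontal Laplacian, and for this I would invoke the structural result of \cite{DGN03}: a continuous weakly $H$-convex (resp. weakly $H$-concave) function on a stratified group possesses a symmetrized horizontal distributional Hessian which is a nonnegative (resp. nonpositive) symmetric-matrix-valued Radon measure. Taking the trace of this matrix measure yields $\Delta_H d\leq 0$ in the distributional sense, which is precisely (i). The function $d$ is continuous, indeed Euclidean $1$-Lipschitz, on $\Omega$, so the regularity hypotheses of the cited result are met; the only point demanding care is that, since the horizontal fields do not commute, the relevant object must be the symmetrized horizontal Hessian, whose trace nonetheless coincides with $\Delta_H$.

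The main obstacle is exactly this passage from weak $H$-concavity of a merely Lipschitz function to the nonpositivity of $\Delta_H d$ as a distribution: it cannot be carried out by a pointwise second-order computation and rests essentially on the convex-analytic framework of \cite{DGN03}. Once (i) is available, part (ii) requires only verifying the hypotheses of Theorem \ref{thmm1}(a), namely that $d$ is positive and locally CC-Lipschitz on $\Omega$. Positivity is immediate, and the CC-Lipschitz property follows because $d$ is Euclidean $1$-Lipschitz while, on the bounded set $\Omega$ in a step-two group, the ball--box estimate gives $|g-g'|\leq C\,d_{CC}(g,g')$; hence $d$ is CC-Lipschitz. Applying Theorem \ref{thmm1}(a) with $p=2$ then yields the stated inequality with constant $1/4$, and its optimality follows, under the generic condition that $\nabla_H d$ does not vanish identically near some boundary point, from part (b) of the same theorem as indicated above.
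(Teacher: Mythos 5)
Your proposal is correct and follows essentially the same route as the paper: weak $H$-concavity of $d$ from Theorem \ref{THM:s.concave2}, the result of \cite{DGN03} (the paper quotes it as: each $X_k^2 d$ is a non-positive Radon measure, equivalent to your trace-of-the-Hessian formulation) to get (i), and then Theorem \ref{thmm1}(a) with $p=2$ for (ii). Your explicit verification of the locally CC-Lipschitz hypothesis via the ball--box comparison is a detail the paper leaves implicit, but it is the same argument.
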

\proof 
    Let $\rho\in C_c^\infty(G)$ with $\rho\ge 0$ and 
    $\int_G\rho\,dx=1$. For $\epsilon>0$ we define 
\[
\rho_\varepsilon(x)
= \varepsilon^{-Q}\rho(\delta_{\varepsilon^{-1}}x)\,.
\]
and
\[
d_\varepsilon(x)=(d * \rho_\varepsilon)(x)=
\int_G d(xy^{-1})\rho_\varepsilon(y)dy\,.
\]
We will  show that $d_\varepsilon$ is weakly $H$-concave on the set
\[
\Omega_\varepsilon:=\{x \in \Omega\,:\, B_{CC}(x,\varepsilon)\subset \Omega \}\,.
\]
Let $g\in\Omega_\varepsilon$ and let $g'\in H_g\cap\Omega_{\epsilon}$. Since $g'\in H_g$ there exists $v\in V_1$
such that
\[
g' = g\exp(v) \in \Omega_\varepsilon,
\]
and for every $\lambda \in [0,1]$ we have 
\[
g_\lambda := g\,\delta_\lambda(g^{-1}g') = g\exp(\lambda v)\,,\qquad 
\]
implying that the weak $H$-concavity condition can also be written as 
\[
d(g\exp(\lambda v))
\ge (1-\lambda)d(g) + \lambda d(g\exp v),\qquad \lambda\in[0,1]\,,
\]
For $y \in \text{supp}\rho_\varepsilon \subset B_{CC}(e, \varepsilon)$, we define the curve 
\[
\gamma_y(t) := g\exp(tv)y^{-1},\qquad t\in[0,1]\,.
\]
Then $\gamma_y$ is a horizontal curve inside $\Omega$ connecting $gy^{-1} \in B_{CC}(g,\varepsilon)\subset\Omega$ with $g'y^{-1} B_{CC}(g',\varepsilon)\subset\Omega$. So
\[
d(\gamma_y(\lambda))\geq (1-\lambda)d(\gamma_y(0))+\lambda d(\gamma_y(1))\,,
\]
that is 
\[
d(g\exp(\lambda v)y^{-1})
\ge (1-\lambda)d(gy^{-1})
  + \lambda d(g\exp(v)y^{-1})\, .
\]
Multiplying by $\rho_\varepsilon(y)\ge0$ and integrating we obtain
\[
d_\varepsilon(g\exp(\lambda v))
\ge (1-\lambda)d_\varepsilon(g)
  + \lambda d_\varepsilon(g\exp v)\,,
\]
i.e. $d_\varepsilon$ is weakly $H$-concave in $\Omega_\varepsilon$.

Now, we have
\begin{align}
\Delta_{p,H} d_\varepsilon &= \operatorname{div}_H
\bigl(|\nabla_H d_\varepsilon|^{p-2}\nabla_H d_\varepsilon\bigr)
\nonumber \\
 &=|\nabla_H d_\varepsilon|^{p-4}
 \Big( 
|\nabla_H d_\varepsilon|^{2}\Delta_H d_\varepsilon
+ (p-2)\bigl\langle D^2_{H}d_\varepsilon\,\nabla_H 
d_\varepsilon,\nabla_H d_\varepsilon\bigr\rangle \Big),
\la{aa7}
\end{align}
where
\[
D^2_{H}d_\varepsilon
:= 
\left(\frac12\bigl(X_i X_j d_\varepsilon + X_j X_i 
d_\varepsilon\bigr)\right)_{i,j=1}^{2n} 
\]
is the symmetrized horizontal Hessian of $d_{\epsilon}$
(here $X_{n+k}=Y_k$).
By \cite[Theorem 5.12]{DGN03}  $D_{H}^{2}d_\varepsilon$ is negative 
semidefinite on $\Omega_\varepsilon$. Using also the elementary
matrix inequality $A\leq {\rm tr}(A)I_d$ which is valid from
any symmetric positive definite matrix, we conclude from
\eqref{aa7} that
\[
\Delta_{p,H}d_\varepsilon \leq 0\,,\qquad \text{pointwise on } \Omega_\varepsilon\,.
\]
Finally, to show that $\Delta_{p,H}d \leq 0$ in the weak sense
we need to show that for any
$\varphi \in C_{c}^{\infty}(\Omega)$
\be
\la{claim4}
    \lim_{\varepsilon \to 0} \int_{\Omega} |\nabla_H d_\varepsilon|^{p-2} \langle \nabla_H d_\varepsilon, \nabla_H \varphi \rangle dx= \int_{\Omega} |\nabla_H d |^{p-2} \langle \nabla_H d, \nabla_H \varphi \rangle dx\,.
\ee
Since 
\[
\nabla_H d_\varepsilon \to \nabla_H d\,,\qquad \text{a.e. in supp}(\varphi)\,,
\]
we have
\[
\langle |\nabla_H d_\varepsilon|^{p-2}\nabla_H d_\varepsilon, \nabla_H \varphi\rangle \to \langle |\nabla_H d|^{p-2}\nabla_H d\, \nabla_H \varphi \rangle
\]
Now, notice that since $|\nabla_h d_\varepsilon|$ is bounded on ${\rm supp}(\varphi)$ we have 
\[
\left| \langle |\nabla_H d_\varepsilon|^{p-2}\nabla_H d_\varepsilon, \nabla_H \varphi \rangle \right| \leq C_\varphi\,,
\]
and we can apply the dominated convergence theorem to get 
\[
\lim_{\varepsilon\to 0}\langle \Delta_{p,H} d_\varepsilon,\varphi\rangle = -\int_\Omega |\nabla_H d|^{p-2} \langle \nabla_H d,\nabla_H\varphi\rangle\,dg = \langle \Delta_{p,H} d,\varphi\rangle\,,
\]
which completes the proof of (i) Part (ii) then follows
from Theorem \ref{THM:Hardy,gen}.

\

\section*{Appendix}

We prove here the following result used in the proof
of Theorem \ref{THM:Hardy,N,Omega}.

\begin{proposition}
Let $\Pi_1$, $\Pi_2$ be two half-spaces in the Heisenberg group
$\Hei^n$ and let $d_{N,1}$, $d_{N,2}$
denote the correspoding distances to the boundary with respect to
the gauge quasi-norm $N$, cf. \eqref{ntor}. If
there exists $\xi\in\Hei^n$ such that
\[
d_{N,1}(\xi)=d_{N,2}(\xi) \qquad \mbox{ and } \qquad 
\nabla d_{N,1}(\xi)=\nabla d_{N,2}(\xi),
\]
then $\Pi_1=\Pi_2$.
\end{proposition}

\proof
By group action there exist $\xi_1,\xi_2\in\Hei^n$ such that
\[
d_{N,k}(\xi) = {\rm dist}_N( \xi_k \, \xi , \partial \Pi_0) 
=:d_N(\xi_k \, \xi) , \quad k=1,2, \; \; \xi\in\Pi_k,
\]
where $\Pi_0$
and $d_N$ is as in Lemma \ref{lem:d_Nformula}.

Given $\xi\in\Pi_1\cap \Pi_2$ we set
\[
\xi_k  \xi =: \eta_k  =:(\alpha_k,\beta_k,\tau_k) \; , \qquad k=1,2.
\]
By Lemma \ref{lem:d_Nformula} $d_N(\eta_k)$ depends only on
$\rho_k:=\sqrt{|\alpha_k|^2+|\beta_k|^2}$ and $\tau_k$. Let $s_k$, $k=1,2$, be defined by
\be 
\la{aa5}
s_k^3 +2s_k =\frac{\tau_k}{\rho_k^2}.
\ee
If $d_{N,1}(\xi)=d_{N,2}(\xi)$ then $d_{N}(\eta_1)=d_{N}(\eta_2)$ and recalling also Remark \ref{kos} we obtain
\be 
\la{aa1}
\rho_1^4 \, s_1^4 \, (s_1^2+1) = \rho_2^4 \, s_2^4 \, (s_2^2+1).
\ee
Writing $\xi=(x,y,t)$ and $\xi_k=(x_k,y_k,t_k)$ we have
\[
\eta_k =\big( x_k + x , y_k+y ,t_k+t +2(x_k\cdot y -x\cdot y_k) \big).
\]
Assume now that we  additionally
have $\nabla d_{N,1}(\xi)=\nabla d_{N,2}(\xi)$. 
There holds
\[
\parder{d_{N,k}}{t}(\xi) =  \parder{d_N}{\tau_k}(\eta)\,.
\]
Hence, using again Remark \ref{kos}, 
\be 
\la{aa2}
\rho_1^2 \, s_1^3  = \rho_2^2 \, s_2^3 .
\ee
Combining \eqref{aa1} and \eqref{aa2} and recalling that $s_1$ and $s_2$
are positive we obtain that
\be
s_1=s_2=:s \; , \qquad \qquad  \rho_1=\rho_2=:\rho.
\la{aa3}
\ee
Now, since $d_N(\eta_k)$ depends only on $\rho_k$ and $\tau_k$, we have by Remark \ref{kos} that 
\begin{align*}
\parder{d_{N,k}}{x_i}(\xi)& = \parder{d_N}{\rho_k}
\parder{\rho_k}{x_i} + \parder{d_N}{\tau_k}
\parder{tau_k}{x_i} \\
& =  \frac{1}{\rho_k} \parder{d_N}{\rho_k}(x_i +x_{k,i}) -
2 \parder{d_N}{\tau_k} y_{k,i} \\
&=  -\frac{ \rho_k^2 s_k^4}{d_{N,k}^3}(x_i +x_{k,i}) -
\frac{ \rho_k^2 s_k^3}{2d_{N,k}^3} y_{k,i},
\end{align*}
and similarly
\[
\parder{d_{N,k}}{y_i}(\xi) =   -\frac{ \rho_k^2 s_k^4}{d_{N,k}^3}(y_i +y_{k,i}) +\frac{ \rho_k^2 s_k^3}{2d_{N,k}^3} x_{k,i}.
\]
Recalling also \eqref{aa3}, it follows that at a point $\xi$ as in the statement we have
\begin{align*}
& -s(x_i +x_{1,i})  -\frac{ y_{1,i}}{2} = 
-s(x_i +x_{2,i})  -\frac{ y_{2,i}}{2} \\
& -s(y_i +y_{1,i}) +\frac{ x_{1,i}}{2} = 
-s(y_i +y_{2,i})  +\frac{ x_{2,i}}{2}
\end{align*}
and therefore $x_1=x_2$ and $y_1=y_2$.
Finally, from \eqref{aa5} we also obtain $\tau_1=\tau_2$ and therefore $t_1=t_2$. Hence $\xi_1=\xi_2$ as required.
\endproof

\medskip

\noindent
{\bf Acknowledgment.} We would like to thank the two anonymous
referees for the careful reading of the paper and for
their useful comments.

%%%%%%%%%%%%%

\end{document}